\newtheorem{thm}{Theorem}[section]
\newtheorem{cor}[thm]{Corollary}
\newtheorem{lem}[thm]{Lemma}
\newtheorem{prop}[thm]{Proposition}
\theoremstyle{definition}
\newtheorem{rem}[thm]{Remark}
\newtheorem{example}[thm]{Example}
\newtheorem{examples}[thm]{Examples}
\newcommand{\Z}{\mathbb{Z}}
\newcommand{\Q}{\mathbb{Q}}
\newcommand{\N}{\mathbb{N}}
\newcommand{\C}{\mathbb{C}}
\title[On cyclic essential extensions of simple modules]{On cyclic essential extensions of simple modules over differential operator rings}
\author{Alveri Sant'Ana and Robson Vinciguerra}
\address{Alveri Sant'Ana, Instituto de Matemática, Universidade Federal do Rio Grande do Sul, Brazil}
\email{alveri@mat.ufrgs.br}
\address{Robson Vinciguerra, Instituto de Matemática, Universidade Federal do Rio Grande do Sul, Brazil}
\email{robson.vinciguerra@ufrgs.br}
\keywords{Differential operator rings; Cyclic essential extensions; Simple modules; $\delta$-simple rings; $\delta$-primitive rings}
\thanks{The last author was partially supported by CNPq, Brazil. Some results were obtained during the visit of the last author at University of Porto, Portugal. He would like to thanks the hospitality of the mathematical department of University of Porto}
\begin{document}

\begin{abstract}
In this paper we discuss under which conditions cyclic essential extensions of simple modules over a differential operator ring $R[\theta; \delta]$ are Artinian. In particular, we study the case when $R$ is either $\delta$-simple or $\delta$-primitive. Furthermore, we obtain important results when $R$ is an affine algebra of Kull dimension 2. As an application we characterize the differential operator rings $\C[x, y][\theta; \delta]$ for which cyclic essential extensions of simple modules are Artinian.
\end{abstract}

\maketitle


\section*{Introduction}

A Noetherian ring $S$ whose cyclic essential extensions of simple $S$-modules are Artinian are known as rings which satisfy the {\it property ($\diamond$)}. Such rings began to be studied independently around 1974 when Jategaonkar used this property to answer the Jacobson's conjecture in the affirmative, for
fully bounded Noetherian rings. The property ($\diamond$) has been investigated in several classes of rings, for example: Noetherian commutative rings \cite{Matlis} and \cite{Matlis2}; Noetherian domains of Krull dimension one (see \cite[Theorem 10]{Krause}); the enveloping algebra ${\mathcal U}(sl_2(K))$ \cite{Dahlberg}; Noetherian down-up algebras \cite{CLP}; the quantum plane and the quantized Weyl algebras \cite{CM}.

The first example of a Noetherian domain not satisfying this property was published by Musson in 1980 \cite{Musson2}. The same author showed in \cite{Musson}, that differential operator rings $K[x][\theta, x^n\partial_x]$ not satisfy $(\diamond)$ whenever $n > 0$. In a recent paper by Carvalho, Hatipo\u{g}lu and Lomp \cite{CHL}, it was completely characterized the rings $K[x][\theta, \delta]$ which have the property ($\diamond$) (see \cite[Corollary 4.1]{CHL}). Furthermore, in the same paper it was also shown that if $\delta$ is a locally nilpotent derivation, then $R[\theta; \delta]$ satisfies $(\diamond)$ (see \cite[Proposition 2.1]{CHL}).

Our main purpose in this paper is to provide necessary and sufficient conditions for $R[\theta; \delta]$ to satisfy the property $(\diamond)$ in the following cases: $R$ is $\delta$-simple; $R[\theta; \delta]$ is a primitive ring and also whenever $R$ has Krull dimension 2.

We organize the paper as follows. In the first section, we fix some notations and we recall some basic concepts involving derivations and differential operator rings that will be useful throughout the text.

In the second section, we show that under certain conditions, if a Noetherian integral domain $R$ is $\delta$-simple, then $R[\theta; \delta]$ does not satisfy $(\diamond)$ (see Theorem \ref{aneldsimples}). As usual, the Krull dimension of $R$ will be denoted by $\mbox{K.dim}(R)$. When $R$ is $\delta$-simple and $\mbox{K.dim}(R) \leq 1$, $R[\theta; \delta]$ has the property $(\diamond)$ (see Proposition \ref{simplesdiamantedim1}). As a consequence, if a commutative $K$-algebra $R$ is either a unique factorization domain (UFD, for short) $\delta$-simple or an affine domain $\delta$-simple, we show that $R[\theta; \delta]$ satisfies $(\diamond)$ if and only if $\mbox{K.dim}(R) \leq 1$ (see Theorem \ref{dfudsimplesdiamante}).

In the third section, we show that if a commutative $K$-algebra $R$ is an affine domain, then $R$ is $\delta$-primitive (where $\delta \neq 0$) if and only if $R[\theta; \delta]$ is a primitive ring (see Theorem \ref{primitivoeqdprimitivo}). As a consequence, in the case when $R[\theta; \delta]$ is primitive (or equivalently, $R$ is $\delta$-primitive), we obtain that $R[\theta; \delta]$ satisfies  $(\diamond)$ if and only if $R$ is $\delta$-simple and $\mbox{K.dim}(R) \leq 1$ (see Corollary \ref{diamanteprimitivo}). Moreover, when $\mbox{K.dim}(R) = 1$, we characterize the differential operator rings $R[\theta; \delta]$ which satisfy the property $(\diamond)$ (see Theorem \ref{kdimumdiamante}) and we present an example of a differential operator ring $R[\theta; \delta]$ satisfying $(\diamond)$, where the derivation $\delta$ is not locally nilpotent (see Remark \ref{diamantenaolocal}).

In the last section, we consider $R$ a UFD of Krull dimension 2 which is an affine $K$-algebra. Whenever $R$ does not have maximal ideals which are $\delta$-ideals, we provide necessary and sufficient conditions under which $R[\theta; \delta]$ satisfies $(\diamond)$ (see Theorem \ref{primitivodiamante}). In this case when $R$ has maximal ideals which are $\delta$-ideals, we give sufficient conditions for that $R[\theta; \delta]$ does not satisfy $(\diamond)$ (see Proposition \ref{contexemprimitivodiamante}). Moreover, the differential operator rings $\C[x, y][\theta; \delta]$ that are primitives were characterized (see Proposition \ref{caractdprimitivo}). As an application we provide necessary and sufficient conditions for $\C[x, y][\theta; \delta]$ to satisfy $(\diamond)$ (see Theorem \ref{primitivodiamante2}).


\section{Preliminaries}
Throughout this paper $K$ will denote a field of characteristic zero and $R$ will be a commutative ring. An additive map $\delta: R \rightarrow R$ is called a {\it derivation} of $R$ if $$\delta(ab) = \delta(a)b + a\delta(b)$$ for all $a, b \in R$. If $R$ is a $K$-algebra and $\delta$ is a derivation of $R$ such that $\delta(\alpha a) = \alpha \delta(a)$ for all $\alpha \in K$ and $a \in R$, we say that $\delta$ is a {\it $K$-derivation}. If $R = K[x_{1}, \ldots, x_{n}]$, the polynomial ring in $n$ variables over $K$, any $K$-derivation of $K[x_{1}, \ldots, x_{n}]$ is of the form $$\delta = a_1 \partial_{x_{1}} + \cdots + a_n\partial_{x_{n}},$$ where $\partial_{x_{i}} = \partial/\partial_{x_{i}}$ is the partial derivative with respect to $x_i$ and $a_1, \ldots, a_n \in R$ (see \cite[Theorem 1.2.1]{Nowicki}).

An ideal $I$ of $R$ is said to be a {\it $\delta$-ideal} if $\delta(I) \subseteq I$. Of course, 0 and $R$ are $\delta$-ideals of $R$ and in the case when these are the only such ideals of $R$, we say that $R$ is {\it $\delta$-simple}. The set $$R^{\delta} = \{r \in R \,|\, \delta(r) = 0\}$$ is a subring of $R$ known as the {\it ring of constants} of $R$. Note that if a $K$-algebra $R$ is $\delta$-simple, then $R^{\delta}$ is a field and $R$ is a domain (see \cite[Propositions 13.1.1 and 13.1.2]{Nowicki}).

An element $a \in R$ is called a {\it Darboux element} with respect to $\delta$ if $\delta(a) = ba$ for some $b \in R$. If the context is clear we simply refer to $a$ as a Darboux element. Note that $a$ is a Darboux element if and only if $Ra$ is a non-zero $\delta$-ideal of $R$. Furthermore, whenever a $K$-algebra $R$ is a UFD and $a\in R$ is a Darboux element, it is easy to check that every irreducible factor of $a$ is also a Darboux element.

Let $I$ be a ideal of $R$ and set $$(I: \delta) = \{r \in R \,|\, \delta^n(r) \in I, \forall\, n \geq 0\}.$$ It is easy to see that $(I: \delta)$ is the biggest $\delta$-ideal of $R$ contained in $I$. Moreover, if $R$ is a $K$-algebra and $P$ is a prime ideal of $R$, $(P: \delta)$ is a prime $\delta$-ideal (see \cite[Proposition 1.1]{GW}).

We say that an ideal $P$ of $R$ is {\it $\delta$-prime} if $P$ is a $\delta$-ideal such that $P \neq R$ and for all $I$ and $J$ $\delta$-ideals of $R$ with $IJ \subseteq P$, then $I \subseteq P$ or $J \subseteq P$. The ring $R$ is said to be {\it $\delta$-prime} whenever $0$ is a $\delta$-prime ideal. Note that if $P$ is a prime ideal that is a $\delta$-ideal, then $P$ is $\delta$-prime. In the case when $R$ is a Noetherian $K$-algebra, the $\delta$-prime ideals of $R$ are precisely the prime ideals of $R$ that are $\delta$-ideals (see \cite[Corollary 1.4]{GW}).

Let $A$ be a multiplicative system of $R$, that is, $A$ is a multiplicatively closed set, $0 \not\in A$ and $1 \in A$. We will denote by $RA^{-1}$ the localization of $R$ at $A$ and by $\pi: R \rightarrow RA^{-1}$ the canonical homomorphism $r \mapsto r1^{-1}$. If $A = \{x^i \,|\, i \in \N \}$, for some $x \in R$ non-zero-divisor, we denote $RA^{-1}$ by $R_x$ and if $A = R \setminus P$, for some prime ideal $P$ of $R$, we write $R_P$ for $RA^{-1}$ and $P_P$ for the extended ideal $PA^{-1}$. Note that any derivation $\delta$ of $R$ extends uniquely to a derivation (still denoted by $\delta$) of $RA^{-1}$ via the quotient's rule $\delta \left(rx^{-1}\right) = (\delta(r)x - r\delta(x))x^{-2}$ for all $rx^{-1} \in RA^{-1}$. Furthermore, if $I$ is a $\delta$-ideal of $R$, the extended ideal $IA^{-1}$ is a $\delta$-ideal of $RA^{-1}$. Moreover, it is easy to check that $RA^{-1}$ is $\delta$-simple whenever $R$ is $\delta$-simple.

The {\it differential operator ring} of $R$ with respect to $\delta$, denoted by $S = R[\theta; \delta]$, is a free left (and right) $R$-module with basis $\{1, \theta, \theta^{2}, \ldots \}$. The elements of $S$ are polynomials in $\theta$ with coefficients in $R$ whose addition is the usual of polynomials and the multiplication extends from $R$ via the rule $\theta a = a \theta + \delta(a)$ for all $a \in R$. Let $f = \sum_{i=0}^n a_i \theta^{i}$ be a non-zero polynomial of $S$ with $a_n \neq 0$. The integer $n$ is called {\it degree of $f$} and will be denoted by $\mbox{deg}(f)$. As usual, we say that the degree of the zero of $S$ is $-\infty$. Moreover, the following identities are true: $$\theta^n a = \sum_{i=0}^n {n \choose i} \delta^{n-i}(a) \theta^{i} \ \ \ \ \mbox{and} \ \ \ \ a\theta^n = \sum_{i=0}^n (-1)^{i}{n \choose i} \theta^{n-i} \delta^i(a) \ \ \ \ \ \mbox{for all} \ a \in R.$$
Let $I$ be a $\delta$-ideal of $R$ and $\overline{\delta}$ the derivation on $R/I$ induced by $\delta$, that is, $\overline{\delta}(a + I) = \delta(a) + I$ for all $a \in R$. In this case $$S/SI \simeq R/I[\theta; \overline{\delta}].$$
If $R$ is a Noetherian $K$-algebra and $\mathcal{P}$ is any prime ideal of $S$, then $P = \mathcal{P} \cap R$ is a prime $\delta$-ideal of $R$ satisfying either $\mathcal{P} = SP$ or $\delta(R) \subseteq P$ (see \cite[Lemma 3.22]{GW_Livro}). Furthermore, whenever $\delta$ is non-zero and $\mathcal{P} \neq 0$, we must have also $P = \mathcal{P} \cap R \neq 0$ (see \cite[Lemma 3.18]{GW_Livro}).


\section{Commutative Noetherian $\delta$-simple Rings}

We say that $R$ is {\it $\delta$-primitive} if $R$ contains a maximal ideal which does not contain any non-zero $\delta$-ideals. In \cite{CHL}, Carvalho, Hatipo\u{g}lu and Lomp showed that given a differential operator ring $R[\theta; \delta]$ over a Noetherian integral domain without $\Z$-torsion $R$, if $R$ is $\delta$-primitive and $R[\theta; \delta]$ satisfy $(\diamond)$, then $R$ is $\delta$-simple. This fact motivated us to study property $(\diamond)$ whenever $R$ is $\delta$-simple. In this section, we shall give necessary and sufficient conditions for $R[\theta; \delta]$ to satisfy $(\diamond)$ whenever $R$ is $\delta$-simple. We begin by showing some lemmas that will be useful to construct a non-Artinian cyclic essential extension of a simple $R[\theta; \delta]$-module.

\begin{lem}\label{lema1}
	Let $R$ be an integral domain with derivation $\delta$ and $S = R[\theta; \delta]$. If $x \in R \setminus \{0\}$, then $R \cap S\theta x = 0$.
\end{lem}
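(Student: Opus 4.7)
The plan is a direct degree computation. I would take an arbitrary element $r \in R \cap S\theta x$, write $r = f\theta x$ for some $f \in S$, expand $f$ in its canonical form $f = \sum_{i=0}^{n} a_i \theta^i$ with $a_i \in R$ and $a_n \neq 0$ (assuming $f \neq 0$), and then rewrite $f\theta x$ as a polynomial in $\theta$ with left coefficients in $R$. The key tool is the identity
\[
\theta^m x = \sum_{j=0}^{m} \binom{m}{j}\delta^{m-j}(x)\theta^j
\]
recalled in the Preliminaries, which I would apply with $m = i+1$ to each summand $a_i \theta^{i+1} x$.

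With this substitution, each $a_i \theta^{i+1} x$ becomes $a_i x \theta^{i+1}$ plus terms of $\theta$-degree at most $i$. Collecting contributions, the coefficient of $\theta^{n+1}$ in $f\theta x$ is $a_n x$, while all remaining terms have $\theta$-degree at most $n$. Since $R$ is a domain and both $a_n$ and $x$ are non-zero, $a_n x \neq 0$, so $\deg(f\theta x) = n+1 \geq 1$. Hence $f\theta x \notin R$ whenever $f \neq 0$.

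Therefore the only way $r = f\theta x$ can lie in $R$ is to have $f = 0$, which forces $r = 0$. This gives $R \cap S\theta x = 0$, as desired. There is no real obstacle here: the only thing to be careful about is writing $f\theta x$ in the standard left-canonical form before comparing degrees, since $\theta$ does not commute with $x$, and making sure that the leading term really does survive, which is guaranteed by $R$ being a domain together with $x \neq 0$.
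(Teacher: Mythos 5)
Your proof is correct and follows essentially the same route as the paper: both expand $f\theta x$ in left-canonical form, identify the leading coefficient $a_n x$ of $\theta^{n+1}$, and use that $R$ is a domain to conclude this term cannot vanish, forcing $f = 0$. No issues.
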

\begin{proof}
	Suppose that there exists $a = (\sum_{i=0}^{n} b_i\theta^{i})\theta x \in R \cap S\theta x$ with $a, b_i \in R$ and $b_n \neq 0$. Then $$a = \sum_{i=0}^{n} b_i\theta^{i+1}x = b_n\theta^{n+1}x + \sum_{i=0}^{n-1} b_i\theta^{i+1}x = b_nx\theta^{n+1} + g$$	
	for some $g \in S$ such that $\mbox{deg}(g) < n+1$. By comparing the coefficients of all monomials in $\theta$, we obtain $b_nx = 0$. Since $R$ is a domain, we must have $x = 0$, which is a contradiction. Thus $\sum_{i=0}^{n} b_i\theta^{i} = 0$, and therefore $a = 0$.
\end{proof}

\begin{lem}\label{essencial}
	Let $R$ be a $K$-algebra which is an integral domain with a derivation $\delta$ and $S = R[\theta; \delta]$. Suppose that there exists $x \in R$ such that $\delta(x)$ is an invertible element in $R$ or $x$ is a prime element but not Darboux. Then $S/S\theta x$ is an essential extension of the left $S$-submodule $Sx/S\theta x$.
\end{lem}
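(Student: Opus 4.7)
The plan is to prove that for every $f \in S \setminus S\theta x$, the cyclic submodule $S\bar{f}$ of $S/S\theta x$ meets $Sx/S\theta x$ non-trivially. Writing $J = S\theta x$, the decomposition $S = R \oplus S\theta$ gives $Sx = Rx + J$, and by Lemma \ref{lema1}, $R \cap J = 0$, so $Sx/J$ is naturally identified with $Rx$. It therefore suffices to produce $g \in S$ and a non-zero $r' \in Rx$ with $gf \equiv r' \pmod{J}$.

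I first reduce to a convenient representative of $\bar{f}$. From the identity $\theta^k x = \sum_{j=0}^k \binom{k}{j}\delta^{k-j}(x)\theta^j$ recorded in the preliminaries, together with $\theta^k x \in J$ for $k \geq 1$, I obtain $x\theta^k \equiv -\sum_{j=0}^{k-1}\binom{k}{j}\delta^{k-j}(x)\theta^j \pmod{J}$. Hence, whenever the leading coefficient $a_m$ of a representative $\sum_{i=0}^m a_i\theta^i$ of $\bar{f}$ lies in $Rx$, the $\theta$-degree can be lowered modulo $J$. Iterating, I may assume $f = \sum_{i=0}^m a_i\theta^i$ with $a_m \notin Rx$; the degenerate case $m = 0$ is settled at once by $g = x$.

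The main step is a computation: by induction on $k$ one shows $x^k\theta^k \equiv r_k \pmod{J}$ for some $r_k \in R$ obeying the recurrence $r_{k+1} = x\delta(r_k) - (k+1)\delta(x) r_k$, established by repeatedly substituting $\theta x = x\theta + \delta(x)$ and discarding summands ending in $\theta x$. Reducing the recurrence modulo $Rx$ gives $r_k \equiv (-1)^k k!\,\delta(x)^k \pmod{Rx}$. Applied term by term, $x^m f \equiv \sum_{i=0}^m a_i x^{m-i} r_i =: r \pmod{J}$, and since every summand with $i < m$ lies in $Rx$, one has $r \equiv (-1)^m m!\,a_m \delta(x)^m \pmod{Rx}$.

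Finally, the hypothesis on $x$ forces $r \notin Rx$. If $\delta(x)$ is invertible, then $\delta(x)^m$ is a unit and $a_m\delta(x)^m$ is an associate of $a_m \notin Rx$; if $x$ is prime but not Darboux, then $Rx$ is a prime ideal and $\delta(x) \notin Rx$, so primality combined with $a_m \notin Rx$ again yields $a_m \delta(x)^m \notin Rx$. In particular $r \neq 0$. Taking $g = x^{m+1}$, we have $gf \equiv xr \pmod{J}$ with $xr \in Rx \setminus \{0\}$, and since $R \cap J = 0$ this yields a non-zero element of $Sx/J$ inside $S\bar{f}$. The main technical obstacle is the third paragraph: tracking the $r_k$ modulo $Rx$ through the recurrence and verifying that all the lower-order pieces of $x^m f$ absorb into $Rx$; once this is in place, both branches of the hypothesis reduce to a single primality/unit argument against the ideal $Rx$.
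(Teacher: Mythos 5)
Your proof is correct, and at the strategic level it is the same as the paper's: multiply a suitable representative by $x^{m+1}$, reduce modulo $S\theta x$, and use Lemma \ref{lema1} to see that the resulting element of $Rx$ is a non-zero element of $Sx/S\theta x$. The difference lies in how the key computation is organized. The paper runs an induction on $\deg(f)$, carrying the invariant ``$x\nmid$ leading coefficient'' through each step via the identity $x^{n+1}f=(x^n a_n\theta^{n-1})\theta x + x^n[(xa_{n-1}-na_n\delta(x))\theta^{n-1}+\cdots]$, and invokes the dichotomy on $x$ at every stage to preserve that invariant; it also works with a minimal-degree element of an arbitrary submodule $U$ rather than reducing a cyclic generator. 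You instead compute a closed form: $x^k\theta^k\equiv r_k \pmod{S\theta x}$ with $r_k\equiv(-1)^k k!\,\delta(x)^k\pmod{Rx}$, so that $x^m f$ reduces to $(-1)^m m!\,a_m\delta(x)^m$ modulo $Rx$, and the dichotomy on $x$ is used exactly once, at the end, as a unit/primality argument against the prime (or proper) ideal $Rx$. This is more explicit and arguably more illuminating (it makes visible why characteristic zero matters, through the factor $m!$). One small remark: your precise recurrence $r_{k+1}=x\delta(r_k)-(k+1)\delta(x)r_k$ is asserted rather than fully derived, and deriving it from $x^{k+1}\theta^{k+1}=x(g_k\theta x+r_k)\theta$ alone is not immediate since the reduction of $xg_k\theta x\theta$ depends on $g_k$ and not only on $r_k$; but this is harmless, because writing $x\theta^{k+1}=\theta^{k+1}x-\sum_{i=0}^{k}\binom{k+1}{i}\delta^{k+1-i}(x)\theta^i$ gives $r_{k+1}=-\sum_{i=0}^{k}\binom{k+1}{i}\delta^{k+1-i}(x)x^{k-i}r_i$, whose terms with $i<k$ all lie in $Rx$, so the only fact you actually use, $r_{k+1}\equiv-(k+1)\delta(x)r_k\pmod{Rx}$, holds regardless of the exact form of the $Rx$-part.
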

\begin{proof}
	We claim that if $f = \sum_{i=0}^n a_i \theta^{i} \in S \setminus \{0\}$ and $x \nmid a_n$, then there exists $r \in R \setminus \{0\}$ such that $$x^{n+1}f - rx \in S\theta x.$$
	
	We will argue by induction on $n = \mbox{deg}(f)$. If $n = 0$, then $f = a_0 \in R$ and we can consider $r = a_0 \neq 0$. Then $xf-rx = xa_0 - a_0x = 0 \in S\theta x$.
		
		Suppose that $n > 0$ and assume the result is valid for every $g \in S$ such that  $\mbox{deg}(g) < n$. Then we have
		$$x^{n+1} f =  x^{n+1}\left(\sum_{i=0}^n a_i \theta^{i}\right)
		=  x^{n+1}(a_n\theta^n + a_{n-1}\theta^{n-1} + g_1)$$ for some $g_1 \in S$ such that $\mbox{deg}(g_1) < n-1$. Thus
		\begin{eqnarray*}
			x^{n+1} f & = & x^{n}[a_n(x\theta^n) + xa_{n-1}\theta^{n-1} + xg_1]\\
			& = &  x^{n}[a_n(\theta^nx - n\delta(x)\theta^{n-1} + g_2) + xa_{n-1}\theta^{n-1} + xg_1]
		\end{eqnarray*}
		for some $g_2 \in S$ such that $\mbox{deg}(g_2) < n-1$. Consequently,
		$$x^{n+1} f =  (x^{n}a_n\theta^{n-1})\theta x + x^{n}[(xa_{n-1}- na_n\delta(x))\theta^{n-1} + \underbrace{a_ng_2 +  xg_1}_{\mbox{deg} \,< \,n-1}].$$
		
		If $\delta(x)$ is an invertible element in $R$, then $x \nmid n\delta(x)a_n$ because $n\delta(x)$ is invertible in $R$ and $x \nmid a_n$. Hence, $x\nmid (xa_{n-1}- na_n\delta(x))$. On the other hand, if $x$ is a prime element such that $x \nmid \delta(x)$, then $x \nmid n\delta(x)a_n$ and so $x\nmid (xa_{n-1}- na_n\delta(x))$. Using the induction hypothesis, we can write $$x^{n}[(xa_{n-1}- na_n\delta(x))\theta^{n-1} + a_ng_2 +  xg_1] = h\theta x + rx$$ for some $h \in S$ and $r \in R \setminus \{0\}$. Thus
		$$x^{n+1} f = \underbrace{(x^{n}a_n\theta^{n-1})\theta x + h\theta x}_{\in \, S\theta x} + rx$$	
		and $x^{n+1} f - rx \in S\theta x$ with $r \in R \setminus \{0\}$, as claimed.
	
	Let $U$ be any non-zero left $S$-submodule of $S/S\theta x$. We will show that $U \cap (Sx/S\theta x) \neq 0$. In fact, let $f + S\theta x$ be a non-zero element of minimal degree of $U$, say $f = \sum_{i=0}^n a_i \theta^{i}$ with $a_n \neq 0$.
	
	If $n = 0$, then $f = a_0 \in R \setminus \{0\}$. Thus $xf = xa_0 \in Sx\setminus S\theta x$ since $xa_0\in R\setminus \{0\}$ and, by Lemma \ref{lema1}, $R\cap S\theta x = 0$. Hence, $0 \neq xf + S\theta x \in U \cap (Sx/S\theta x)$ and so $U \cap (Sx/S\theta x) \neq 0$.
	
	If $n > 0$, then $x \nmid a_n$. Otherwise, $a_n = bx$ for some $b \in R$ and $f =  b(x\theta^n) + g_1$ where $g_1 \in S$ and $\mbox{deg}(g_1) < n$. Then $$f = b(\theta^nx + g_2 )  +  g_1$$
	where $g_2 \in S$ with $\mbox{deg}(g_2) < n$. Hence, $$f = (b\theta^{n-1})\theta x + bg_2  +  g_1.$$
	This implies that $0 \neq f + S\theta x =  (bg_2  +  g_1) + S\theta x \in U$ with $\mbox{deg}\,(bg_2  +  g_1 ) < n$, contradicting our minimality assumption. Therefore $x \nmid a_n$.
	
	Now, by the above statement, there exists $r \in R \setminus \{0\}$ such that $x^{n+1} f - rx \in S\theta x$. Hence, $$x^{n+1}f + S\theta x = rx + S\theta x \in U \cap (Sx/S\theta x).$$ Note that, by Lemma \ref{lema1}, $rx + S\theta x \neq 0$. Therefore $U \cap (Sx/S\theta x) \neq 0$ as desired.
\end{proof}

Let $R$ be a ring with a derivation $\delta$ and $S = R[\theta; \delta]$. Then $R$ becomes a left $S$-module under the action $$\left(\sum_{i=0}^{n}a_i\theta^i\right)\cdot b = \sum_{i=0}^{n}a_i\delta^i(b)$$ for all $b \in R$ and $\sum_{i=0}^{n}a_i\theta^i \in S$. The map $\phi: S \rightarrow R$ defined by $$\phi\left(\sum_{i=0}^{n}a_i\theta^i\right) = \left(\sum_{i=0}^{n}a_i\theta^i\right) \cdot 1 = a_0$$ is an epimorphism of left $S$-modules with $\mbox{ker}(\phi) = S\theta$. Hence, $S/S\theta \,\simeq \, R$ as left $S$-modules and the $S$-submodules of $S/S\theta$ are precisely the left $\delta$-ideals of $R$.

As in \cite{GW_Krull}, we define the {\it $\delta$-Krull dimension} of $R$, denoted by $\mbox{$\delta$-K.dim}(R)$, as being the Krull dimension of left module $_S R$, that is, $\mbox{$\delta$-K.dim}(R) = \mbox{K.dim}_S(R)$.

\begin{lem}\label{nonartinian}
Let $R$ be a Noetherian $K$-algebra which is an integral domain with a derivation $\delta$ and $S = R[\theta; \delta]$. If $R$ is $\delta$-simple and $P$ is a non-maximal prime ideal of $R$, then $S/SP$ is a non-Antinian left $S$-module.
\end{lem}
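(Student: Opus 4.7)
The plan is to exhibit an infinite strictly decreasing chain of left $S$-submodules of $S/SP$. Since $P$ is non-maximal, there exists a prime ideal $Q$ of $R$ with $P\subsetneq Q$ (for example, any maximal ideal of $R$ containing $P$). For each integer $n\ge 1$ I set $L_n = SP + SQ^n$, a left ideal of $S$ that contains $SP$. Because $Q^n\supseteq Q^{n+1}$, the chain $L_1\supseteq L_2\supseteq L_3\supseteq\cdots$ is decreasing; the heart of the argument will be to show these containments are strict, so that
\[
L_1/SP \;\supsetneq\; L_2/SP \;\supsetneq\; L_3/SP \;\supsetneq\; \cdots
\]
is an infinite strictly decreasing chain of $S$-submodules of $S/SP$, whence $S/SP$ cannot be Artinian.

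The key technical step is the identity $(SP + SQ^{n+1})\cap R = P + Q^{n+1}$. Here I use the fact recalled in the preliminaries that $\{1,\theta,\theta^2,\ldots\}$ is a free right $R$-module basis of $S$. With respect to this right basis, for any ideal $I$ of $R$ one has $SI = \bigoplus_{i\ge 0}\theta^i I$: writing a generator $sa$ of $SI$ (with $s\in S$ and $a\in I$) in the right basis $s = \sum_i\theta^i b_i$ gives $sa = \sum_i\theta^i(b_i a)$ with $b_i a\in I$, while the reverse inclusion is immediate from $\theta^i a\in SI$ for every $a\in I$. Applying this with $I = P$ and $I = Q^{n+1}$, an element of $(SP+SQ^{n+1})\cap R$, being of degree zero in the right basis, must equal the degree-zero coefficient $p_0 + q_0$ for some $p_0\in P$ and $q_0\in Q^{n+1}$, yielding the identity.

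With this identity in hand, suppose for contradiction that $L_n = L_{n+1}$ for some $n\ge 1$. Then $SQ^n\subseteq L_{n+1}$, and intersecting with $R$ gives $Q^n\subseteq P + Q^{n+1}$. I then pass to the local ring $A = R_Q/PR_Q$, which is a Noetherian domain (since $PR_Q$ is prime in $R_Q$) whose maximal ideal $\mathfrak m = QR_Q/PR_Q$ is nonzero because $P\subsetneq Q$. The inclusion becomes $\mathfrak m^n\subseteq\mathfrak m^{n+1}$, hence $\mathfrak m^n = \mathfrak m^{n+1} = \mathfrak m\cdot\mathfrak m^n$. Nakayama's lemma applied to the finitely generated $A$-module $\mathfrak m^n$ forces $\mathfrak m^n = 0$, which is impossible since $A$ is a domain with $\mathfrak m\neq 0$. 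The main obstacle I anticipate is the left/right bookkeeping: although each $L_n$ is defined as a left ideal of $S$, the cleanest description of $L_n\cap R$ routes through the \emph{right} $R$-module structure on $S$, where $SP$ and $SQ^{n+1}$ decompose coordinatewise along the basis $\{\theta^i\}$. As a side remark, the argument uses neither the $\delta$-simplicity of $R$ nor that $R$ is a domain; the conclusion holds whenever $R$ is commutative Noetherian and $P$ is a non-maximal prime.
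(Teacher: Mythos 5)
Your argument is correct, and it takes a genuinely different route from the paper. The paper's proof is dimension-theoretic: it splits into cases according to whether $\mbox{K.dim}(R/P)$ is finite or infinite and invokes the Goodearl--Warfield computations of Krull dimension for differential operator rings ([GW\_Krull, Propositions 2.7 and 4.2]) to conclude $\mbox{K.dim}_S(S/SP)\geq 1$; the $\delta$-simplicity of $R$ enters precisely in the infinite-dimensional case, where it forces $\delta\mbox{-K.dim}(R)=0$. You instead exhibit an explicit infinite strictly descending chain $SP+SQ^n$ of left ideals above $SP$. All the key steps check out: the decomposition $SI=\bigoplus_{i\geq 0}\theta^i I$ with respect to the free \emph{right} $R$-basis $\{\theta^i\}$ is valid because $I$ is a two-sided ideal of the commutative ring $R$, and uniqueness of right coordinates gives $(SP+SQ^{n+1})\cap R = P+Q^{n+1}$; the passage to $A=R_Q/PR_Q$, a Noetherian local domain with nonzero maximal ideal $\mathfrak m$, converts an equality $L_n=L_{n+1}$ into $\mathfrak m^n=\mathfrak m\cdot\mathfrak m^n$, which Nakayama rules out. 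Your closing remark is also accurate: your proof uses only that $R$ is commutative Noetherian and $P$ is a non-maximal prime, so it is both more elementary (self-contained, no citation of the $\delta$-Krull dimension machinery) and strictly more general than the paper's lemma. What the paper's approach buys in exchange is the sharper quantitative statement $\mbox{K.dim}_S(S/SP)=m+1$, which is of the same flavor as the computation reused later in Lemma \ref{krull}; your chain only certifies non-Artinianity, but that is all the lemma asserts.
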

\begin{proof}
	First assume that $\mbox{K.dim}(R/P)$ is finite. Since $P$ a non-maximal prime ideal of $R$, there exists a maximal ideal $M$ such that $M \supsetneq P$. Note that $SM$ is a proper left ideal of $S$, otherwise we would have $1 \in SM \cap R = M$, which is a contradiction. Hence, $S/SM \neq 0$, and therefore $\mbox{K.dim}_S(S/SM) \geq 0$. This implies that $$m = \mbox{max}\{\mbox{K.dim}_S(S/SQ) \ | \ Q \in \mbox{Spec}(R) \ \ \mbox{and}\ \ Q \supsetneq P\} \geq 0.$$ By \cite[Proposition 2.7]{GW_Krull}, we obtain $\mbox{K.dim}_S(S/SP) = m + 1 \geq 1$, so that $S/SP$ is non-Antinian.
	
	Suppose now that $\mbox{K.dim}(R/P)$ is infinite. Since $R$ is $\delta$-simple, $_SR$ is simple because the $S$-submodules of $_SR$ are precisely the $\delta$-ideals of $R$. Hence, $\mbox{$\delta$-K.dim}(R) = \mbox{K.dim}_S(R) = 0$. By \cite[Proposition 4.2]{GW_Krull}, we obtain $\mbox{K.dim}_S(S/SP) = \mbox{K.dim}(R/P)$ is infinite. Again, it follows that $S/SP$ is non-Antinian.
\end{proof}

Now we are able to give sufficient conditions to obtain non-Artinian cyclic essential extension of a simple $R[\theta; \delta]$-module, in the case when $R$ is $\delta$-simple.

\begin{thm}\label{aneldsimples}
Let $R$ be a Noetherian $K$-algebra which is an integral domain with a derivation $\delta$ such that $R$ is $\delta$-simple. If there exists $x \in R$ satisfying the following two conditions:
	\begin{enumerate}
		\item there exists a non-maximal prime ideal $P \subseteq R$ such that $x \in P$,
		\item $\delta(x)$ is an invertible element in $R$ or $x$ is a prime element of R,
	\end{enumerate}
	then $S = R[\theta; \delta]$ does not satisfy $(\diamond)$.
\end{thm}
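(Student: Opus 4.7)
The plan is to exhibit $S/S\theta x$ as a cyclic essential extension of a simple $S$-module that fails to be Artinian; this directly witnesses that $R[\theta;\delta]$ does not satisfy $(\diamond)$.

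First I would verify the hypotheses of Lemma \ref{essencial} for the element $x$. The case when $\delta(x)$ is invertible is immediate (and it incidentally forces $x\neq 0$, since $\delta(0)=0$). In the case when $x$ is prime, $x$ is automatically non-Darboux: otherwise $Rx$ would be a non-zero proper $\delta$-ideal of $R$ (proper because a prime element is not a unit), contradicting the $\delta$-simplicity of $R$. Lemma \ref{essencial} then guarantees that $S/S\theta x$ is an essential extension of its submodule $Sx/S\theta x$.

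Next I would identify $Sx/S\theta x$ as a simple $S$-module. The right-multiplication-by-$x$ map $\varphi\colon S/S\theta \to Sx/S\theta x$, $s+S\theta\mapsto sx+S\theta x$, is well-defined since $(S\theta)x=S(\theta x)\subseteq S\theta x$, and it is surjective by construction. Since $R$ is $\delta$-simple, $S/S\theta\cong R$ is a simple left $S$-module, so $\varphi$ must be either zero or an isomorphism. It cannot be zero: otherwise $x\in R\cap S\theta x$, contradicting Lemma \ref{lema1} together with $x\neq 0$. Hence $Sx/S\theta x\cong R$ is simple, and $S/S\theta x$ becomes a cyclic essential extension of a simple $S$-module.

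Finally, I would show that $S/S\theta x$ is not Artinian by funneling into Lemma \ref{nonartinian}. Since $x\in P$, we have the chain of left ideals $S\theta x\subseteq Sx\subseteq SP$, yielding an $S$-module surjection $S/S\theta x\twoheadrightarrow S/SP$. By Lemma \ref{nonartinian}, applied to the non-maximal prime $P$ over the $\delta$-simple ring $R$, the quotient $S/SP$ is not Artinian; hence neither is $S/S\theta x$. I do not anticipate a serious obstacle, as the heavy lifting is packaged in Lemmas \ref{essencial} and \ref{nonartinian}; the two delicate points are the reduction \emph{prime $\Rightarrow$ non-Darboux} that feeds $\delta$-simplicity into Lemma \ref{essencial}, and the choice of the specific cyclic module $S/S\theta x$ so that it sandwiches between a copy of the simple module $R$ and the non-Artinian module $S/SP$.
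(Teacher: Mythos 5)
Your proposal is correct and follows essentially the same route as the paper's own proof: reduce \emph{prime $\Rightarrow$ non-Darboux} via $\delta$-simplicity to invoke Lemma \ref{essencial}, identify $Sx/S\theta x\cong S/S\theta\cong R$ as simple, and use $S\theta x\subseteq SP$ together with Lemma \ref{nonartinian} to see that $S/S\theta x$ is non-Artinian. Your explicit verification that the right-multiplication map $S/S\theta\to Sx/S\theta x$ is a nonzero (hence iso) map of simple modules is a detail the paper leaves implicit, but it is not a different argument.
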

\begin{proof}
	First note that $x \nmid \delta(x)$, otherwise $Rx$ would be a non-trivial $\delta$-ideal of $R$, a contradiction. Since either $\delta(x)$ is an invertible element in $R$ or $x$ is a prime element in $R$, by Lemma \ref{essencial}, it follows that $S/S\theta x$ is an essential extension of the left $S$-submodule $Sx/S\theta x$. Moreover, using the fact that $R$ is $\delta$-simple, it follows that $S/S\theta \simeq Sx/S\theta x$ is a simple left $S$-module.
	
	Now, since $P$ is a non-maximal prime ideal of $R$ and $R$ is $\delta$-simple, it follows that $S/SP$ is a non-Antinian left $S$-module, by Lemma \ref{nonartinian}. As $x \in P$, we have $S\theta x \subseteq SP \subseteq S$, and consequently $S/S\theta x$ is a non-Artinian essential extension of the simple module $Sx/S\theta x$.
\end{proof}

\begin{prop}\label{simplesdiamantedim1}
	Let $R$ be a Noetherian integral domain without $\Z$-torsion and $\delta$ be a derivation of $R$ such that $R$ is $\delta$-simple. If $\mbox{K.dim}(R) \leq 1$, then $S = R[\theta; \delta]$ satisfies $(\diamond)$.
\end{prop}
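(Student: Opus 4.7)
The plan is to reduce the problem to Krause's theorem \cite[Theorem 10]{Krause}, which asserts that every Noetherian domain of Krull dimension one satisfies $(\diamond)$. Since $R$ is Noetherian and $\delta$ a derivation, $S = R[\theta;\delta]$ is already a Noetherian integral domain, and $S/S\theta \cong R \neq 0$ forces $S$ to be non-Artinian, so $\mbox{K.dim}(S) \geq 1$. The entire content of the argument is therefore the opposite bound $\mbox{K.dim}(S) \leq 1$.

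I would split into the two cases $\mbox{K.dim}(R) = 0$ and $\mbox{K.dim}(R) = 1$. If $\mbox{K.dim}(R) = 0$, then $R$ is a field and $S$ is an Ore extension by a derivation over a field, which is classically a principal left and right ideal domain, so $\mbox{K.dim}(S) = 1$. If $\mbox{K.dim}(R) = 1$, then $R$ has non-zero maximal ideals, the $\delta$-simplicity of $R$ forces $\delta \neq 0$, and no maximal ideal of $R$ can be a $\delta$-ideal. As the $S$-submodules of $_SR = S/S\theta$ coincide with the $\delta$-ideals of $R$, $_SR$ is a simple $S$-module and $\mbox{$\delta$-K.dim}(R) = 0$. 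Applying \cite[Proposition~2.7]{GW_Krull} to the prime $0 \in \mbox{Spec}(R)$, exactly as in the proof of Lemma~\ref{nonartinian}, I obtain
\[
\mbox{K.dim}(S) \;=\; 1 + \max\bigl\{\mbox{K.dim}_S(S/SM) : M \in \mbox{Max}(R)\bigr\}.
\]

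It therefore remains to show that $\mbox{K.dim}_S(S/SM) = 0$ for every maximal ideal $M$ of $R$, i.e.\ that $S/SM$ is Artinian as a left $S$-module; this is the main obstacle. The approach I have in mind is to exploit $\delta$-simplicity: any proper $S$-submodule of $S/SM$ contracts to a $\delta$-ideal of $R$ strictly containing $M$, and the non-existence of non-trivial $\delta$-ideals constrains the submodule lattice of $S/SM$ so as to force the descending chain condition. Should a direct lattice-theoretic verification prove too delicate, I would instead invoke the Goodearl--Warfield machinery on the Krull dimension of Ore extensions over Noetherian $\delta$-simple commutative rings from \cite{GW_Krull}, from which $\mbox{K.dim}(S) = \mbox{$\delta$-K.dim}(R) + 1 = 1$ should follow in one stroke. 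The $\Z$-torsion-freeness hypothesis is presumably used at this stage to rule out characteristic-$p$ anomalies. Once $\mbox{K.dim}(S) \leq 1$ is established, Krause's theorem delivers $(\diamond)$.
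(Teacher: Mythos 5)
Your overall architecture is exactly the paper's: reduce everything to showing $\mbox{K.dim}(S)=1$ and then quote Krause's theorem. The field case is fine, and your identification of the remaining obstacle --- that one must show $\mbox{K.dim}_S(S/SM)=0$ for every maximal ideal $M$ --- is correct. But that obstacle is precisely where your proposal stops being a proof. Your primary mechanism, ``any proper $S$-submodule of $S/SM$ contracts to a $\delta$-ideal of $R$ strictly containing $M$,'' does not work. Since $R$ is $\delta$-simple and $\mbox{K.dim}(R)=1$, the ideal $M$ is \emph{not} a $\delta$-ideal, so $SM$ is only a left ideal, $S/SM$ is not a ring, and for a left ideal $N\supseteq SM$ the contraction $N\cap R$ need not be $\delta$-stable: from $a\in N\cap R$ one only gets $\theta a=a\theta+\delta(a)\in N$, which does not separate into $\delta(a)\in N$. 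Even granting the claim, $\delta$-simplicity would force the contraction to be all of $R$, which by itself says nothing about the descending chain condition on $S/SM$. Your fallback (``the Goodearl--Warfield machinery should give $\mbox{K.dim}(S)=\mbox{$\delta$-K.dim}(R)+1$ in one stroke'') is the right instinct but is left entirely unverified, and you also only ``presume'' rather than prove where the $\Z$-torsion hypothesis enters.

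The two missing ingredients are both available and are what the paper actually uses. First, the no-$\Z$-torsion hypothesis is needed to show $\mbox{char}(R/M)=0$ for every maximal ideal $M$: if $\mbox{char}(R/M)=p>0$ then $p\neq 0$ in $R$ and $Rp\subseteq M$ is a non-zero $\delta$-ideal, contradicting $\delta$-simplicity. Second, once $\delta(M)\nsubseteq M$ and $\mbox{char}(R/M)=0$ are known, the fact that $S/SM$ is a \emph{simple} left $S$-module is Hart's Lemma 2.4 in \cite{H} (this is exactly how the paper argues in Lemma \ref{krull}); alternatively, and more economically, \cite[Theorem 2.10]{GW_Krull} yields $\mbox{K.dim}(S)=\mbox{K.dim}(R)=1$ directly from these two facts, which is the route the paper's proof takes (and which also handles the field case, where $0$ is a maximal $\delta$-ideal, giving $\mbox{K.dim}(S)=1$). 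With either of these citations inserted at the point you flag as ``the main obstacle,'' your argument closes and coincides in substance with the paper's.
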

\begin{proof}	
	Suppose first that $R$ is a field. In this case, since $0$ is a maximal ideal of $R$ which is a $\delta$-ideal, from \cite[Theorem 2.10]{GW_Krull} it follows that $\mbox{K.dim}(S) = 1$. Now, assume that $\mbox{K.dim}(R) = 1$ and consider $M$ any maximal ideal of $R$. Since $R$ is $\delta$-simple, $\delta(M)\nsubseteq M$. Moreover, as $R$ is an integral domain without $\Z$-torsion, if $\mbox{char}(R/M) = p > 0$, then $Rp \subseteq M$ is a non-zero $\delta$-ideal, a contradiction. Hence, $\mbox{char}(R/M)=0$ and so  $\mbox{K.dim}(S) = 1$, by \cite[Theorem 2.10]{GW_Krull}. Therefore in both cases we have that $S$ is a Noetherian domain of Krull dimension 1, and consequently $S$ satisfies $(\diamond)$ (see \cite[Theorem 10]{Krause}).
\end{proof}

We will apply the general results obtained so far to UFDs and also to affine domains. For such rings, we get a full description of when $R[\theta; \delta]$ satisfies $(\diamond)$ provided $R$ is also $\delta$-simple. We start by recalling some basic facts of commutative algebra.

Given a commutative Noetherian ring $R$ and $I$ an ideal of $R$, the {\it grade (or depth)} of $I$, denoted by $G(I)$, is the length of a maximal regular sequence contained in $I$ (see \cite{K}). In general, we have $G(I) \leq \mbox{ht}(I)$, where $\mbox{ht}(I)$ is the {\it height of $I$} (also called {\it rank of $I$} or {\it codimension of $I$})  (see \cite[Theorem 132]{K}). A ring $R$ is {\it Cohen-Macaulay} if $G(M) = \mbox{ht}(M)$ for any maximal ideal $M$ of $R$. A local Noetherian ring $R$ with maximal ideal $M$ is said to be {\it regular} if the minimal number of generators of $M$ is equal to the Krull dimension of $R$. In such a case, any minimal set of generators for $M$ form a regular sequence, and therefore $G(M) = \mbox{ht}(M)$ (see \cite[Corollary 10.15]{E}). This shows that every regular local ring is a Cohen-Macaulay ring. Finally, a ring $R$ is said to be {\it regular} if $R_P$  is a regular local ring for any prime ideal $P$ of $R$.

\begin{thm}\label{dfudsimplesdiamante}
Let $R$ be a commutative $K$-algebra which is a Noetherian UFD or an affine domain with a derivation $\delta$ such that $R$ is $\delta$-simple. Then $$R[\theta; \delta] \ \mbox{satisfies} \ (\diamond) \ \ \mbox{if and only if} \ \  \mbox{K.dim}(R) \leq 1 .$$
\end{thm}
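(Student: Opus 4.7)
The plan is to combine Proposition~\ref{simplesdiamantedim1} (for the sufficiency direction) with Theorem~\ref{aneldsimples} (for the necessity direction). Sufficiency is essentially immediate: $R$ is a $\delta$-simple $K$-algebra in characteristic zero, so $R$ is automatically an integral domain containing $\Q$, hence $\Z$-torsion-free, and Proposition~\ref{simplesdiamantedim1} applies as soon as $\mbox{K.dim}(R) \leq 1$.

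For the converse, assume $\mbox{K.dim}(R) \geq 2$. I aim to exhibit an element $x \in R$ meeting both hypotheses of Theorem~\ref{aneldsimples}: $x$ must lie in some non-maximal prime ideal $P$ of $R$, and $x$ must be a prime element of $R$ or satisfy $\delta(x) \in R^{\times}$. The UFD case is straightforward: choose a chain $0 \subsetneq P_1 \subsetneq P_2$ of prime ideals (possible since $\mbox{K.dim}(R) \geq 2$) and a nonzero $a \in P_1$, and factor $a$ into prime elements of $R$; since $P_1$ is prime, some such factor $p$ lies in $P_1$. Taking $x = p$ fulfils condition~(2) (as $p$ is prime) and condition~(1) (since $P_1 \subsetneq P_2 \neq R$ shows $P_1$ is non-maximal), and Theorem~\ref{aneldsimples} concludes the UFD case.

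The affine case is the main obstacle, because a general affine domain need not be a UFD and so need not possess prime elements a priori, and its height-one primes need not be principal. The intended route is via the classical Seidenberg--Hart theorem: a $\delta$-simple commutative Noetherian $\Q$-algebra is regular. Hence $R$ is a regular affine $K$-algebra, and by Auslander--Buchsbaum each localisation $R_P$ is a UFD. The dimension formula $\mbox{ht}(P) + \mbox{K.dim}(R/P) = \mbox{K.dim}(R)$, valid in affine $K$-algebras, shows that any height-one prime $P$ of $R$ is non-maximal, so condition~(1) is arranged by placing $x$ in such a $P$. Condition~(2) is the delicate point: for a height-one prime $P$, the derivation $\delta$ extends to the DVR $R_P$, and the $\delta$-simplicity of $R_P$ (inherited from $R$ by the localisation remark in Section~1) ensures that $PR_P$ is not a $\delta$-ideal of $R_P$, so $\delta(\pi)$ is a unit of $R_P$ for any generator $\pi \in P$ of $PR_P$. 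The core difficulty is to lift this local information to the global hypothesis of Theorem~\ref{aneldsimples}: either by producing a genuine prime element of $R$ (for instance via a Bertini-type argument, which is available because $K$ is infinite of characteristic zero) or by arranging $\delta(x) \in R^{\times}$ globally. Once such an $x$ is produced, Theorem~\ref{aneldsimples} concludes.
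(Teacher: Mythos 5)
Your sufficiency argument and your treatment of the UFD case match the paper's proof: $\delta$-simplicity in characteristic zero gives a domain containing $\Q$, so Proposition~\ref{simplesdiamantedim1} applies, and in a UFD of Krull dimension $\geq 2$ any non-zero non-maximal prime contains a prime element, so Theorem~\ref{aneldsimples} applies. The genuine gap is in the affine case. You correctly identify what is needed --- an element $x$ lying in a non-maximal prime of $R$ that is either a prime element of $R$ or has $\delta(x) \in R^{\times}$ --- but you never produce one. The local analysis in $R_P$ (that $\delta(\pi)$ is a unit of the DVR $R_P$ for a local uniformizer $\pi$) gives nothing satisfying the \emph{global} hypotheses of Theorem~\ref{aneldsimples}, and the ``Bertini-type argument'' is only named, not carried out. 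Since this is exactly the step the whole affine case turns on, the proof is incomplete as written.

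The paper closes this gap with a purely commutative-algebraic device. As you note, Seidenberg's theorem makes $R$ regular; the paper then takes a maximal ideal $M$ with $\mbox{ht}(M) = n = \mbox{K.dim}(R) > 1$, observes that $R_M$ is Cohen--Macaulay so that $G(M_M) = \mbox{ht}(M_M) = n$, and uses Kaplansky's Theorem~135 to get $G(M) = n > 1$. A theorem of Davis (\emph{Prime ideals and prime sequences in polynomial rings}, Theorem~2) then guarantees, because $G(M) > 1$, that $M$ has a minimal generating set $\{p_1, \ldots, p_m\}$ in which each single $p_i$ generates a prime ideal --- that is, each $p_i$ is an honest prime element of $R$. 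The Principal Ideal Theorem gives $\mbox{ht}(Rp_i) = 1 < \mbox{ht}(M) \leq m$, so $Rp_i$ is a non-maximal prime, and Theorem~\ref{aneldsimples} applies with $x = p_i$. This is the result you would need to cite or reprove to complete your route; your Bertini heuristic is morally the same idea (a suitable member of $M$ generates a prime) but does not constitute a proof.
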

\begin{proof}
	($\Rightarrow$) Suppose first that $R$ is a UFD and $\mbox{K.dim}(R) > 1$, then every non-zero prime ideal contains a prime element. Hence, $R[\theta; \delta]$ does not satisfy $(\diamond)$, by Theorem \ref{aneldsimples}. 
	
	Assume now that $R$ is an affine domain such that $\mbox{K.dim}(R) = n > 1$. Thus there exists a maximal ideal $M$ of $R$ such that $\mbox{ht}(M) = n$. Since $R$ is a $\delta$-simple affine algebra, it follows from \cite[Theorems 3 and 5]{Seidenberg} that $R$ is a regular ring. Hence, $R_M$ is a regular local ring, and therefore a Cohen-Macaulay ring. Then $$G(M_M) = \mbox{ht}(M_M) = \mbox{ht}(M) = n.$$ Moreover, by \cite[Theorem 135]{K}, we obtain $$G(M) = G(M_M) = n > 1.$$
	Applying now \cite[Theorem 2]{Davis}, we obtain that $M$ has a minimal set $\{p_1, \ldots, p_m\}$ of generators such that $\{p_i\}$ generates a prime ideal for any $1 \leq i \leq m$ (since $G(M) > 1$). Set $x = p_k$ for some $1 \leq k \leq m$ and $P = Rx$. By Principal Ideal Theorem (see \cite[Theorem 10.2]{E}), we have $1 < \mbox{ht}(M) \leq m$, and consequently $P \subsetneq M$. Since $x$ is a prime element and $P = Rx$ is a non-maximal prime ideal of $R$, it follows from Theorem \ref{aneldsimples} that $R[\theta; \delta]$ does not satisfy $(\diamond)$.
	
	($\Leftarrow$) It follows from Proposition \ref{simplesdiamantedim1}.
\end{proof}

The above result has the following immediate consequence.

\begin{cor}\label{exemplosdsimplesdiamante}
	Let $R = K[x_1, \ldots , x_n]$ or $R = K[x_1^{\pm 1}, \ldots , x_n^{\pm 1}]$ with a $K$-derivation $\delta$ such that $R$ is $\delta$-simple. Then $R[\theta; \delta]$ satisfies $(\diamond)$ if and only if $n = 1$.
\end{cor}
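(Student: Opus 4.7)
The plan is to deduce this corollary directly from Theorem \ref{dfudsimplesdiamante}, so the work reduces to verifying that both families of rings satisfy the hypotheses of that theorem and computing their Krull dimension.

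First, I would check that in both cases $R$ is a commutative Noetherian $K$-algebra which is a UFD (and also an affine domain, so either branch of Theorem \ref{dfudsimplesdiamante} applies). For $R = K[x_1, \ldots, x_n]$ this is classical: $R$ is a Noetherian UFD and obviously affine. For $R = K[x_1^{\pm 1}, \ldots, x_n^{\pm 1}]$, I would note that $R$ is the localization of $K[x_1, \ldots, x_n]$ at the multiplicative set generated by $x_1, \ldots, x_n$; a localization of a UFD is again a UFD, and a localization of a Noetherian ring is Noetherian. Moreover, $R$ is finitely generated as a $K$-algebra by $x_1, x_1^{-1}, \ldots, x_n, x_n^{-1}$, so it is affine.

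Next, I would record that $\mathrm{K.dim}(R) = n$ in both cases: for the polynomial ring this is a standard fact, and for the Laurent polynomial ring it follows either from the localization (prime ideals of $R$ correspond to primes of $K[x_1, \ldots, x_n]$ disjoint from $\{x_1^{i_1}\cdots x_n^{i_n}\}$, and one still obtains a chain of length $n$, e.g.\ $0 \subsetneq (x_1-1) \subsetneq (x_1-1, x_2-1) \subsetneq \cdots$) or from the fact that $R$ is affine with transcendence degree $n$ over $K$.

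With these verifications in hand, the corollary follows immediately: since $R$ is $\delta$-simple by hypothesis, Theorem \ref{dfudsimplesdiamante} gives that $R[\theta; \delta]$ satisfies $(\diamond)$ if and only if $\mathrm{K.dim}(R) \leq 1$, i.e.\ $n \leq 1$; since $n \geq 1$ by convention, this is equivalent to $n = 1$. There is no real obstacle here — the content of the result is already packaged inside Theorem \ref{dfudsimplesdiamante}, and the only thing to be careful about is simply citing that Laurent polynomial rings are UFDs of Krull dimension $n$.
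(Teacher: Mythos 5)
Your proposal is correct and is exactly the intended argument: the paper states this corollary as an immediate consequence of Theorem \ref{dfudsimplesdiamante} without further proof, and your verification that both rings are affine UFDs of Krull dimension $n$ is precisely what makes that deduction legitimate.
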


Examples of UFD of Krull dimension 1 that are $\delta$-simple for some derivation $\delta$ are given below.

\begin{examples}\label{anellaurentdsimples}
	(1) Given $\alpha \in K\setminus \{0\}$, it is easy to see that $K[x]$ and $K[[x]]$ are $\delta$-simple with respect to the $K$-derivation $\delta = \alpha\partial_x.$

\vspace{.2cm}

	(2) Consider the local ring $K[x]_{M}$, where $M = K[x]x$. Since $K[x]$ is $\delta$-simple with the $K$-derivation $\delta = \alpha\partial_x$, where $\alpha \in K\setminus \{0\}$, we obtain that $K[x]_{M}$ is also $\delta$-simple.
\end{examples}

In the literature there are several examples of $\delta$-simple UFDs with Krull dimension bigger than $1$.

\begin{examples} (1) \cite[Example 13.4.5]{Nowicki} The polynomial ring $K[x, y]$ is $\delta$-simple with the $K$-derivation $$\delta = (x^2 + y^2 +2)\partial_{x} + (x^2 - y^2)\partial_{y}.$$

	\vspace{.2cm}
	
	(2) \cite[Example 13.4.1]{Nowicki} The polynomial ring $R = K[x_{1}, \ldots, x_{n}]$, where $n > 2$, is $\delta$-simple with the following $K$-derivation $$\delta = (1 - x_{1}x_{2})\partial_{x_{1}} + x_{1}^{3}\partial_{x_{2}} + \sum_{i = 3}^{n} x_{i-1}\partial_{x_{i}}.$$
		
	\vspace{.2cm}
	
	(3) \cite[Teorema 2.5.9]{Archer} The affine $\C$-algebra $R = \C[x_1, x_2, y_1, y_2]/\langle x_{1}^2 + y_{1}^2 - 1, x_{2}^2 + y_{2}^2 - 1\rangle$ with the $\C$-derivation $$\delta = ay_1\partial_{x_1} - ax_1\partial_{y_1} + by_2\partial_{x_2} - bx_2\partial_{y_2}$$ is $\delta$-simple if and only if $a/b$ is a non rational number. Note that $R \simeq \C[t_1, t_{1}^{-1}, t_2, t_{2}^{-1}]$ via the isomorphism
	$$\begin{array}{cc}
	x_1 + iy_1 \mapsto t_1,  \ \ \ \ & x_1 - iy_1 \mapsto t_1^{-1}\\
	x_2 + iy_2 \mapsto t_2,  \ \ \ \ & x_2 - iy_2 \mapsto t_2^{-1}.\\
	\end{array}$$
	
	\vspace{.2cm}
	
	(4) \cite[Proposition 3.3]{BLL} The local ring $R = K[x, y]_{\langle x, y \rangle}$ is $\delta$-simple with the $K$-derivation $$\delta = \partial_x + (\beta y^{n} + 1) \partial_y,$$ where $n$ is a positive integer and $\beta \in \Q \setminus \{0\}$. 
\end{examples}

The next examples give us an application of the above theorem for affine domains.

\begin{example}Let $R = K[x, y]/\langle x^2 + y^2 - 1 \rangle$ with derivation $\delta = y\partial_{x} - x\partial_{y}$. Then $R$ is $\delta$-simple (see \cite[Example 2.5]{Voskoglou}). Note that $R$ is an affine domain of Krull dimension 1. By Theorem \ref{dfudsimplesdiamante}, we concluded that $R[\theta; \delta]$ satisfies $(\diamond)$.
\end{example}

\begin{example}If $R = K[x, y, z]/\langle x^2 + yz - 1\rangle$ with derivation $\delta = (x^2y - z)\partial_{x} + 2x\partial_{y} - 2x^3\partial_{z}$, $R$ is $\delta$-simple (see \cite[Theorem 2.5.23]{Archer}). Since $R$ is an affine domain of Krull dimension $> 1$, by Theorem \ref{dfudsimplesdiamante}, $R[\theta; \delta]$ does not satisfy $(\diamond)$.
\end{example}

We finish this section with a result on locally nilpotent derivations.
Recall that a derivation $\delta$ of $R$ is called {\it locally nilpotent} if, for every $a \in R$, there exists $k > 0$ such that $\delta^k(a) = 0$. For example, let $R = K[x_{1}, \ldots, x_{n}]$. A class of locally nilpotent derivations of these rings are the so called {\it triangular} derivations, that is, derivations  satisfying $\delta(x_1) \in K$ and $\delta(x_i) \in K[x_{1}, \ldots, x_{i-1}]$ for all $i > 1$. Carvalho, Hatipo\u{g}lu and Lomp showed  that $R[\theta; \delta]$ satisfies $(\diamond)$ whenever $R$ is a commutative affine $K$-algebra with a locally nilpotent derivation (\cite[Proposition 2.1]{CHL}). We will show now that if $R$ is  a $\delta$-simple ring with respect to a locally nilpotent derivation, then we have $$\mbox{K.dim}(R) \leq \mbox{K.dim}(R[\theta; \delta]) \leq 1.$$

Given $K \subset L$ an extension of fields, denote by $\mbox{tr.deg}_K(L)$ the transcendence degree of $L$ over $K$ and $\mathcal{F}(R)$ the field of fractions of a ring $R$. If $R$ is an affine $K$-algebra which is an integral domain, it is known that the Krull dimension of $R$ coincides with the transcendence degree of $\mathcal{F}(R)$ over $K$, that is, $\mbox{K.dim}(R) = \mbox{tr.deg}_K(\mathcal{F}(R))$ (see \cite[Corollary 14.29]{Sharp}). In general, we have the following lemma whose proof is due to Manuel Reyes in MathOverflow \cite{R}. Since we were not aware of any reference for it in the literature we reproduce his proof here.


\begin{lem}\cite{R}\label{krulltranscendencia}
	Let $R$ be a $K$-algebra which is an integral domain. Then $$\mbox{K.dim}(R) \leq \mbox{tr.deg}_K(\mathcal{F}(R)).$$
\end{lem}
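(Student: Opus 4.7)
The plan is to reduce to the case where $d := \mbox{tr.deg}_K(\mathcal{F}(R))$ is finite (otherwise the inequality is vacuous), and then prove by induction on the length $n$ of a chain of prime ideals $P_0 \subsetneq P_1 \subsetneq \cdots \subsetneq P_n$ of $R$ that $n \leq d$. The induction step will be powered by the following key lemma, which is really the heart of the matter:

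\medskip

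\textit{Key lemma.} If $R$ is a $K$-algebra which is an integral domain with $\mbox{tr.deg}_K(\mathcal{F}(R)) = d < \infty$, and $P$ is a nonzero prime ideal of $R$, then $\mbox{tr.deg}_K(\mathcal{F}(R/P)) \leq d-1$.

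\medskip

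To prove the lemma I would first argue that a transcendence basis of $\mathcal{F}(R)$ over $K$ can be chosen inside $R$: start with any basis $t_i = r_i/s_i$, $i=1,\dots,d$, with $r_i,s_i\in R$; then $K(r_1,s_1,\dots,r_d,s_d)\subseteq \mathcal{F}(R)$ has transcendence degree exactly $d$ (it contains a basis, and it is inside a field of transcendence degree $d$), so a transcendence basis $r_1,\dots,r_d\in R$ can be extracted from these $2d$ elements. Next, pick any nonzero $a\in P$. Because $\mathcal{F}(R)$ is algebraic over $K(r_1,\dots,r_d)$, there is a nonzero polynomial $f\in K[y,x_1,\dots,x_d]$ with $f(a,r_1,\dots,r_d)=0$. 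Choose $f$ of minimal degree in $y$, and write $f=\sum_{i=0}^{n} f_i(x_1,\dots,x_d)\,y^i$. Minimality in $y$ forces $n\geq 1$ (else $r_1,\dots,r_d$ would be algebraically dependent) and $f_0\neq 0$ (else we could factor out $y$ and, since $a\neq 0$ in the domain $R$, reduce the $y$-degree). Reducing the relation modulo $P$ and using $a\in P$ kills every term with $i\geq 1$, leaving $f_0(\bar r_1,\dots,\bar r_d)=0$ in $R/P$ with $f_0\neq 0$. Hence $\bar r_1,\dots,\bar r_d$ are algebraically dependent over $K$ in $R/P$, so $\mbox{tr.deg}_K(\mathcal{F}(R/P))\leq d-1$.

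Once the lemma is in place, the induction is routine. Assume by induction that the inequality holds for any integral-domain $K$-algebra on chains of length $<n$. Given a chain $P_0\subsetneq\cdots\subsetneq P_n$ in $R$, replace $R$ by $R/P_0$, which is still a $K$-algebra and an integral domain with $\mbox{tr.deg}_K(\mathcal{F}(R/P_0))\leq d$; in $R/P_0$ we have the chain $0\subsetneq P_1/P_0\subsetneq\cdots\subsetneq P_n/P_0$. If $n=0$ there is nothing to do; if $n\geq 1$, apply the key lemma to the nonzero prime $P_1/P_0$ to get $\mbox{tr.deg}_K(\mathcal{F}((R/P_0)/(P_1/P_0)))\leq d-1$, then apply the inductive hypothesis to $(R/P_0)/(P_1/P_0)\cong R/P_1$ with its chain of length $n-1$ to conclude $n-1\leq d-1$, i.e.\ $n\leq d$.

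The main obstacle is the key lemma, specifically the minimal-degree choice of the polynomial $f$ and the observation that the constant term $f_0$ must be nonzero — this is what transfers the algebraic relation from $\mathcal{F}(R)$ down to $R/P$ without losing a dimension accidentally. The other subtlety worth being careful about is that a transcendence basis of $\mathcal{F}(R)$ need not a priori live in $R$, but the generators-and-denominators argument shows one can always be chosen inside $R$, which is essential for the minimal-$y$-degree relation to make sense in $R$ and hence in $R/P$.
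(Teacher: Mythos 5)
Your overall strategy (induct on chain length via a ``transcendence degree drops by one modulo a nonzero prime'' lemma) is sound and genuinely different from the paper's, but the proof of your key lemma has a real gap at its final step. You show that the images $\bar r_1,\dots,\bar r_d$ of one particular transcendence basis of $\mathcal{F}(R)$ become algebraically dependent in $R/P$, and then conclude ``so $\mbox{tr.deg}_K(\mathcal{F}(R/P))\leq d-1$.'' That inference does not follow: the transcendence degree of $\mathcal{F}(R/P)$ is not controlled by the images of a single basis chosen upstairs, because $\mathcal{F}(R/P)$ need not be algebraic over $K(\bar r_1,\dots,\bar r_d)$. For instance, in $R=K[x,y]$ the set $\{x,xy\}$ is a transcendence basis of $K(x,y)$, yet modulo $P=(x)$ both elements map to $0$, so $K(\bar r_1,\bar r_2)=K$ while $\mathcal{F}(R/P)=K(y)$ is transcendental over it. Dependence of the images of your chosen basis therefore bounds only the subfield they generate, not $\mathcal{F}(R/P)$ itself. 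The lemma is nevertheless true, and your own technique proves it once you quantify correctly: take \emph{arbitrary} $s_1,\dots,s_d\in R$; then $a,s_1,\dots,s_d$ are $d+1$ elements of a field of transcendence degree $d$, hence satisfy a nonzero relation $f(a,s_1,\dots,s_d)=0$. Choosing $f$ of minimal $y$-degree, either $\deg_y f=0$ (so the $s_i$ are already dependent in $R$, hence their images are dependent in $R/P$) or your factoring argument gives $f_0\neq 0$ and reduction modulo $P$ yields $f_0(\bar s_1,\dots,\bar s_d)=0$. Since every $d$-tuple of images of elements of $R$ is dependent and $\mathcal{F}(R/P)$ is generated by such images, $\mbox{tr.deg}_K(\mathcal{F}(R/P))\leq d-1$.

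With that repair your argument is complete and is a different route from the paper's. The paper avoids any induction on primes: given a chain $P_0\subset\cdots\subset P_m$, it picks $x_i\in P_i\setminus P_{i-1}$, passes to the affine subalgebra $R'=K[x_1,\dots,x_m]$, observes that the contracted chain is still strict, and invokes the known equality $\mbox{K.dim}(R')=\mbox{tr.deg}_K(\mathcal{F}(R'))$ for affine domains. That reduction is shorter but leans on the affine dimension theorem as a black box; your approach, once fixed, is more self-contained and isolates the elementary mechanism (a minimal-degree relation with nonzero constant term surviving reduction modulo $P$) that makes the inequality work.
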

\begin{proof} Let $P_0 \subset P_1 \subset \ldots \subset P_m$ be a strictly ascending chain of prime ideals of $R$. We will show that $m \leq \mbox{tr.deg}_K(\mathcal{F}(R))$. For each $i \in \{1, \ldots, m\}$, we consider an element $x_i \in P_i \setminus P_{i-1}$. Let $R' \subseteq R$ be the $K$-subalgebra generated by $\{x_1, \ldots, x_m\}$ and set $P'_i = R' \cap P_i$ for all $0 \leq i \leq m$. Note that $P'_0 \subset P'_1 \subset \ldots \subset P'_m$ is a strictly ascending chain of prime ideals of $R'$ because $x_i \in P'_i \setminus P'_{i-1}$. Since $R'$ is a affine $K$-algebra, it follows from \cite[Corollary 14.29]{Sharp} that $\mbox{K.dim}(R') = \mbox{tr.deg}_K(\mathcal{F}(R'))$. Therefore $m \leq \mbox{K.dim}(R') = \mbox{tr.deg}_K(\mathcal{F}(R')) \leq \mbox{tr.deg}_K(\mathcal{F}(R))$, as desired.
\end{proof}

\begin{prop}\label{simpleslocalnilp}
	Let $R$ be a Noetherian integral domain without $\Z$-torsion and let $\delta$ be a derivation of $R$. If $R$ is $\delta$-simple and $\delta$ is locally nilpotent, then $$\mbox{K.dim}(R) \leq \mbox{K.dim}(R[\theta; \delta]) \leq 1.$$
\end{prop}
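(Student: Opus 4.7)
The plan is to exploit local nilpotence together with $\delta$-simplicity to force $R$ to be a polynomial ring in one variable over a field, which then reduces the Krull-dimension bound to exactly the computation carried out in the proof of Proposition \ref{simplesdiamantedim1}.

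First I would dispose of the trivial case $\delta = 0$: here every ideal of $R$ is a $\delta$-ideal, so $\delta$-simplicity forces $R$ to be a field, and $S = R[\theta]$ is a polynomial ring over a field, giving $\mbox{K.dim}(R) = 0 \leq \mbox{K.dim}(S) = 1$. From now on assume $\delta \neq 0$, and choose $a \in R$ with $\delta(a) \neq 0$. By local nilpotence there is a largest $n \geq 1$ with $c := \delta^n(a) \neq 0$, and then $c \in R^{\delta}$. Since $R$ is $\delta$-simple, $R^{\delta}$ is a field (by the preliminaries), so $c$ is invertible, and $s := c^{-1}\delta^{n-1}(a)$ satisfies $\delta(s) = 1$; that is, $\delta$ admits a slice.

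Next I would invoke the slice theorem for locally nilpotent derivations. Since $R$ is $\Z$-torsion-free, $R^{\delta}$ is a field of characteristic zero and in particular contains $\Q$; the classical Dixmier argument (see e.g.\ \cite{Nowicki}) then produces the $R^{\delta}$-linear projection $\pi(r) = \sum_{k \geq 0} \tfrac{(-s)^k}{k!}\delta^k(r)$ onto $R^{\delta}$, together with the reconstruction formula $r = \sum_{k \geq 0} \tfrac{s^k}{k!}\pi(\delta^k(r))$. This shows that $R = R^{\delta}[s]$ is a polynomial ring in a single transcendental variable over the field $R^{\delta}$, with $\delta$ acting as $d/ds$. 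In particular $\mbox{K.dim}(R) = 1$, and $S = R[\theta;\delta]$ is isomorphic to the first Weyl algebra $A_1(R^{\delta})$.

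With $\mbox{K.dim}(R) = 1$ in hand and $R$ a $\delta$-simple Noetherian integral domain without $\Z$-torsion, exactly the same argument as in the proof of Proposition \ref{simplesdiamantedim1} (via \cite[Theorem 2.10]{GW_Krull}) yields $\mbox{K.dim}(S) = 1$, which together with the trivial bound $\mbox{K.dim}(R) = 1 \leq \mbox{K.dim}(S)$ closes the chain $\mbox{K.dim}(R) \leq \mbox{K.dim}(S) \leq 1$. The main subtlety is the slice step: one must verify that in our generality (an arbitrary $\Z$-torsion-free Noetherian commutative domain, not just a polynomial algebra) the series defining $\pi$ terminates for every $r \in R$ (which follows from local nilpotence) and that the factorials appearing in the reconstruction formula are invertible in $R^{\delta}$, which is precisely where the $\Z$-torsion-free hypothesis is needed.
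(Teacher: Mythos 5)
Your proof is correct, but it reaches the key bound $\mbox{K.dim}(R)\leq 1$ by a genuinely different route from the paper. The paper never invokes the slice theorem: it notes that $R^{\delta}$ is a field, applies Lemma \ref{krulltranscendencia} to get $\mbox{K.dim}(R)\leq \mbox{tr.deg}_{R^{\delta}}(\mathcal{F}(R))$, and then quotes Limanov's result \cite[Lemma 4]{L} that a non-zero locally nilpotent derivation forces this transcendence degree to be $1$; the equality $\mbox{K.dim}(R)=\mbox{K.dim}(R[\theta;\delta])$ is obtained, exactly as you do, from \cite[Theorem 2.10]{GW_Krull} after checking that $\delta(M)\nsubseteq M$ and $\mbox{char}(R/M)=0$ for every maximal ideal $M$ (with the field case treated separately). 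You instead manufacture a slice $s$ with $\delta(s)=1$ -- using $\delta$-simplicity to invert the non-zero constant $\delta^{n}(a)$ -- and apply the Dixmier/slice theorem over the $\Q$-algebra $R$ to conclude $R=R^{\delta}[s]$. That is a strictly stronger intermediate statement: when $\delta\neq 0$ it identifies $R$ as a univariate polynomial ring over the field $R^{\delta}$ (so $\mbox{K.dim}(R)$ is exactly $1$) and $S$ as a first Weyl algebra, whereas the paper's argument extracts only the transcendence-degree inequality and stays within the tools already assembled in Section 2. Both are legitimate; you correctly isolate where the $\Z$-torsion-free hypothesis enters your argument (it makes $R$ a $\Q$-algebra so the factorials in the Dixmier map are invertible), and the same hypothesis is what the paper uses to guarantee $\mbox{char}(R/M)=0$ before applying \cite[Theorem 2.10]{GW_Krull}. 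The only cosmetic point is that your two cases ($\delta=0$ versus $\delta\neq 0$) silently subsume the paper's ``$R$ is a field'' case, since a characteristic-zero field admits no non-zero locally nilpotent derivation -- your slice construction in fact proves this, so no gap results.
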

\begin{proof} First assume that $R$ is a field. Since $0$ is a maximal ideal of $R$ which is $\delta$-ideal, we can apply \cite[Theorem 2.10]{GW_Krull} to obtain $\mbox{K.dim}(R[\theta;\delta]) = 1$. Thus $0 = \mbox{K.dim}(R) < \mbox{K.dim}(R[\theta; \delta]) = 1$.
	
	On the other hand, suppose that $R$ is not a field and let $M$ be any maximal ideal of $R$. Since $R$ is $\delta$-simple, we must have $\delta(M) \nsubseteq M$. Furthermore, as $R$ has no $\Z$-torsion, we have also $\mbox{char}(R/M) = 0$. By \cite[Theorem 2.10]{GW_Krull}, $\mbox{K.dim}(R) = \mbox{K.dim}(R[\theta; \delta])$. Now, consider the subring of constants $R^{\delta}$ of $R$ which is a field because $R$ is $\delta$-simple. Set $K = R^{\delta}$ and it follows from Lemma \ref{krulltranscendencia} that $\mbox{K.dim}(R) \leq \mbox{tr.deg}_K(\mathcal{F}(R))$. Since $\delta$ is locally nilpotent, it follows from \cite[Lemma 4]{L} that $\mbox{tr.deg}_K(\mathcal{F}(R)) = 1$. Therefore $\mbox{K.dim}(R[\theta; \delta]) = \mbox{K.dim}(R) \leq \mbox{tr.deg}_K(\mathcal{F}(R)) = 1$.
\end{proof}


\section{Commutative Noetherian $\delta$-primitive rings}

In \cite{CHL}, Carvalho, Hatipo\u{g}lu and Lomp studied the property $(\diamond)$ in $R[\theta; \delta]$ whenever $R$ is a $\delta$-primitive ring. In particular, they showed the following.

\begin{thm}\label{CHLdprimitivo}\cite[Theorem 3.5]{CHL}
	Let $R$ be a Noetherian integral domain without $\Z$-torsion with non-zero derivation $\delta$ such that $R$ is $\delta$-primitive. If $R[\theta; \delta]$ satisfies $(\diamond)$, then $R$ is $\delta$-simple.
\end{thm}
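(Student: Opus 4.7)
The plan is to argue the contrapositive: assume $R$ is $\delta$-primitive, witnessed by a maximal ideal $M$ with $(M:\delta)=0$, and that $R$ is not $\delta$-simple, and then construct a cyclic essential extension of a simple left $S$-module that fails to be Artinian, contradicting $(\diamond)$.

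First, because $R$ is Noetherian and not $\delta$-simple, Zorn's Lemma yields a maximal proper $\delta$-ideal $Q \neq 0$ of $R$. Since $R$ is a Noetherian $K$-algebra (and we are effectively in characteristic zero via the $\Z$-torsion-free hypothesis), \cite[Corollary 1.4]{GW} ensures that $Q$ is a prime ideal of $R$, so $R/Q$ is a $\overline{\delta}$-simple Noetherian integral domain. Combining $(M:\delta)=0$ with the fact that $Q$ is a non-zero $\delta$-ideal forces $Q \not\subseteq M$, whence $Q+M = R$. The isomorphism $S/SQ \simeq (R/Q)[\theta;\overline{\delta}]$ recorded in the preliminaries will let me transfer module-theoretic constructions from the $\overline{\delta}$-simple quotient back up to $S$.

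Next I would try to use Lemmas \ref{essencial} and \ref{nonartinian} inside the $\overline{\delta}$-simple ring $R/Q$. The comaximality $Q+M=R$ lets me pick $x \in R$ whose image $\bar{x}$ in $R/Q$ is non-zero and satisfies the hypotheses of Lemma \ref{essencial}: either I adjust $x$ modulo $Q$ so that $\delta(x)$ becomes a unit modulo $Q$, or I choose $\bar x$ to be a prime non-Darboux element (recalling that in the $\overline{\delta}$-simple domain $R/Q$ every Darboux element is necessarily a unit). Lemma \ref{essencial} then gives that $\overline{S}/\overline{S}\theta\bar x$ is an essential extension of the simple submodule $\overline{S}\bar x/\overline{S}\theta\bar x$, where $\overline{S}=(R/Q)[\theta;\overline{\delta}]$. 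If $Q$ is not already a maximal ideal of $R$, then $R/Q$ has a non-maximal prime ideal (namely the zero ideal), and Lemma \ref{nonartinian} produces the required non-Artinianness; lifting everything back via $S \twoheadrightarrow \overline{S}$ yields the desired cyclic essential non-Artinian extension of a simple $S$-module.

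The main obstacle I expect is the case in which the maximal $\delta$-ideal $Q$ is itself a maximal ideal of $R$: then $R/Q$ is a field, $\overline{S}$ is a Noetherian domain of Krull dimension $1$ which satisfies $(\diamond)$ by Proposition \ref{simplesdiamantedim1}, and the counterexample cannot be produced inside $S/SQ$ alone. In that case one must exploit the $\delta$-primitive maximal ideal $M$ directly, working either in $R$ itself or after localizing at $M$, to find an element $x \in M$ with $\delta(x)$ invertible in the appropriate sense, once more using the comaximality $M+Q=R$. Throughout, the $\Z$-torsion-freeness hypothesis is what keeps the binomial coefficients appearing in the proof of Lemma \ref{essencial} non-zero and makes the required polynomial manipulations valid.
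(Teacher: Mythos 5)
First, a point of comparison: the paper does not prove this statement at all. Theorem \ref{CHLdprimitivo} is quoted from \cite[Theorem 3.5]{CHL} and used as an input to Corollary \ref{diamanteprimitivo} and its successors, so there is no internal proof to measure yours against. Judged on its own, your proposal is a plan rather than a proof, and it has gaps that the tools of this paper cannot close under the stated hypotheses.

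The central gap is the supply of an element $\bar{x}\in R/Q$ meeting the hypotheses of Lemma \ref{essencial}. In a general Noetherian $\overline{\delta}$-simple domain there is no reason for any element to have an invertible derivative, nor for prime elements to exist at all; in this paper, producing such elements is exactly where the extra hypotheses (UFD, or affine domain plus Seidenberg's regularity theorem and Davis's theorem on prime sequences) are spent in the proof of Theorem \ref{dfudsimplesdiamante}. The theorem you are proving assumes only ``Noetherian domain without $\Z$-torsion'', so the Lemma \ref{essencial} strategy lacks its raw material. Second, even granting such an $\bar{x}$, your appeal to Lemma \ref{nonartinian} with the zero ideal as the non-maximal prime does not deliver non-Artinianness: in the mechanism of Theorem \ref{aneldsimples} one needs $\overline{S}\theta\bar{x}\subseteq \overline{S}P$ for a non-maximal prime $P$ containing $\bar{x}$, and $P=0$ forces $\bar{x}=0$. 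Third, the obstruction you flag is larger than you acknowledge: not only when $R/Q$ is a field, but whenever $\mbox{K.dim}(R/Q)\leq 1$, Proposition \ref{simplesdiamantedim1} shows that $S/SQ$ itself satisfies $(\diamond)$, so no counterexample can be manufactured inside that quotient, and ``exploit $M$ directly'' is not yet an argument. (A smaller issue: \cite[Corollary 1.4]{GW}, which you use to see that the maximal $\delta$-ideal $Q$ is prime, is stated for Noetherian $K$-algebras; absence of $\Z$-torsion is strictly weaker, so that step needs its own justification.) As it stands the proposal does not establish the theorem; if you need the result, cite \cite[Theorem 3.5]{CHL} as the paper does.
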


In this section, we will show that if $R$ is an affine $K$-algebra that is an integral domain with non-zero derivation $\delta$ such that $R$ is $\delta$-primitive, then $R[\theta; \delta]$ satisfies ($\diamond$) if and only if $R$ is $\delta$-simple and \mbox{K.dim}$(R) \leq 1$. Moreover, we will characterize the differential operator rings $R[\theta; \delta]$ satisfying $(\diamond)$ when $R$ has Krull dimension 1.

A ring $R$ is called {\it $\delta$-$G$-ring} if $R$ is $\delta$-prime and $$\bigcap\{P \triangleleft R \,|\, P \ \mbox{is non-zero $\delta$-prime}\} \neq 0.$$ In \cite{GW}, Goodearl and Warfield showed that $R[\theta;\delta]$ is a primitive ring if and only if $\delta \neq 0$ and $R$ is either $\delta$-primitive or a $\delta$-$G$-ring (see \cite[Theorem 3.7]{GW}). In the case when $R$ is an affine $K$-algebra which is an integral domain, we have the following stronger result.

\begin{thm}\label{primitivoeqdprimitivo}
	Let $R$ be an affine $K$-algebra which is an integral domain with a derivation $\delta$. If $R$ is a $\delta$-$G$-anel, then $R$ is $\delta$-primitive. Consequently \begin{center}
		$R[\theta;\delta]$ is primitive if and only if $\delta \neq 0$ and $R$ is $\delta$-primitive.
	\end{center}
\end{thm}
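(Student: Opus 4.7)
The plan is to establish the first assertion by a localization argument; the equivalence below then follows immediately from the criterion of Goodearl and Warfield recalled just before the statement.

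First I would handle the degenerate sub-case where $R$ has no non-zero $\delta$-prime ideal. Since $R$ is Noetherian, any non-zero $\delta$-ideal is contained in some prime $\delta$-ideal (its minimal primes are $\delta$-ideals), so this sub-case forces $R$ to be $\delta$-simple; then any maximal ideal $M$ of $R$ satisfies $(M:\delta)=0$, because $(M:\delta)$ is a proper $\delta$-ideal, hence $R$ is $\delta$-primitive. In the genuine case, I pick a non-zero element $c$ lying in the intersection of all non-zero $\delta$-prime ideals and pass to $R_c$, which inherits $\delta$. Primes of $R_c$ correspond bijectively to primes of $R$ not containing $c$, the correspondence preserves the $\delta$-ideal property (extensions and contractions of $\delta$-ideals are again $\delta$-ideals), and by choice of $c$ every non-zero prime $\delta$-ideal of $R$ contains $c$. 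Therefore $R_c$ has no non-zero prime $\delta$-ideal, and the Noetherian fact above then forces $R_c$ to be $\delta$-simple.

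Next I would produce the required maximal ideal of $R$. Because $R$ is an affine $K$-algebra, so is $R_c = R[c^{-1}]$. Pick a maximal ideal $M'$ of $R_c$: by Zariski's lemma $R_c/M'$ is a finite extension of $K$, so taking $M = M'\cap R$ the inclusion $R/M \hookrightarrow R_c/M'$ realises $R/M$ as an affine $K$-algebra contained in a finite field extension of $K$; hence $R/M$ is itself a field, $M$ is maximal in $R$, and $c \notin M$. If $I\subseteq M$ were a non-zero $\delta$-ideal of $R$, then $IR_c$ would be a non-zero $\delta$-ideal of $R_c$ (it contains $x/1\ne 0$ for any $0\ne x \in I$, using that $R$ is a domain) contained in the proper ideal $M'$, contradicting the $\delta$-simplicity of $R_c$. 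Therefore $(M:\delta)=0$, and $R$ is $\delta$-primitive.

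The final equivalence then reduces, via the cited theorem of Goodearl and Warfield, to the observation that under our hypotheses the $\delta$-$G$-ring alternative collapses into $\delta$-primitivity, while the converse direction is already contained in Goodearl--Warfield. The main obstacle I anticipate is precisely the step contracting a maximal ideal of $R_c$ back to a maximal ideal of $R$: this is where affineness over $K$ is essential and cannot be weakened to plain Noetherianness, which is exactly what gives the strengthening over the general Goodearl--Warfield criterion.
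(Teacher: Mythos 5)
Your argument is correct, and it takes a genuinely different route from the paper's. The paper proves the contrapositive in a few lines: if $R$ is not $\delta$-primitive, then every maximal ideal $M$ contains a non-zero $\delta$-ideal, so $(M:\delta)$ is a non-zero prime $\delta$-ideal for every $M$, whence $\bigcap\{P \mid P \text{ non-zero } \delta\text{-prime}\} \subseteq \bigcap_{M} (M:\delta) \subseteq \mathcal{J}(R) = 0$ (affine domains are semiprimitive by \cite[Theorem 5.3]{LAM}), contradicting the $\delta$-$G$-hypothesis. You instead argue directly: you choose $0 \neq c$ in the intersection of the non-zero $\delta$-primes, observe that $R_c$ is then $\delta$-simple (in effect one direction of \cite[Proposition 2.9]{GW}, which the paper only invokes later), and contract a maximal ideal of the affine algebra $R_c$ back to a maximal ideal of $R$ via Zariski's lemma. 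The essential input is the same in both cases --- the Jacobson property of affine $K$-algebras, which you use in the form ``maximal ideals of $R_c$ contract to maximal ideals of $R$'' and the paper uses in the form $\mathcal{J}(R)=0$ --- but your proof is constructive where the paper's is not: it exhibits an explicit $\delta$-simple localization at a single element together with an explicit maximal ideal witnessing $\delta$-primitivity, at the cost of the extra bookkeeping about extension and contraction of $\delta$-ideals and the separate treatment of the degenerate case where $R$ is already $\delta$-simple (there your passage from ``no non-zero prime $\delta$-ideal'' to $\delta$-simplicity, via minimal primes of a $\delta$-ideal or equivalently via $(P:\delta)$, is valid in characteristic zero). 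All the individual steps check out, including the contraction step you correctly identify as the place where affineness, rather than mere Noetherianness, is indispensable.
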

\begin{proof}
	Assume that $R$ is a $\delta$-$G$-ring but is not $\delta$-primitive and take $M$ an arbitrary maximal ideal of $R$. Then there must exist a non-zero $\delta$-ideal $I$ such that $M \supseteq I$. Consider $(M: \delta)$ the largest $\delta$-ideal of $R$ contained in $M$. Note that $(M: \delta) \neq 0$ because $(M: \delta) \supseteq I$. Since $\mbox{char}(R/M) = 0$ (because $R$ is a $K$-algebra) and $M$ is a prime ideal, it follows that $(M: \delta)$ is a prime ideal of $R$. Thus
	
\begin{eqnarray*}
		0 \ \ \neq \ \ \bigcap\{P \triangleleft R \,|\, P \ \mbox{is non-zero $\delta$-prime}\} & \subseteq & \bigcap_{M \,\in\, \mbox{Max}(R)}\!\!\!\!\!\!\!\!\!(M: \delta)  \\
		& \subseteq & \bigcap_{M \,\in\, \mbox{Max}(R)}\!\!\!\!\!\!\!\!\!M \ \ \ \ \ \ \ \ \  = \ \  \mathcal{J}(R). \\
	\end{eqnarray*}
	But, as $R$ is an affine $K$-algebra which is an integral domain, by \cite[Theorem 5.3]{LAM}, we obtain $\mathcal{J}(R) = 0$, a contradiction. Therefore $R$ is $\delta$-primitive.
	
	The last statement is now a consequence of \cite[Theorem 3.7]{GW}.
\end{proof}

Taking into account the Propositions \ref{simplesdiamantedim1} and \ref{CHLdprimitivo} it follows easily the following.

\begin{cor}\label{dsimplesdiamante}
Let $R$ be a Noetherian integral domain of Krull dimension $\leq 1$ without $\Z$-torsion. If $R$ is $\delta$-primitive for some non-zero derivation $\delta$, then
	$$R[\theta; \delta] \ \mbox{satisfies} \ (\diamond) \ \ \mbox{if and only if} \ \  R \ \mbox{is} \ \mbox{$\delta$-simple}.$$
\end{cor}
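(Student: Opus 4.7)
The proof is essentially a direct combination of the two results stated immediately before the corollary, so my plan is very short. The hypotheses on $R$ (Noetherian integral domain without $\Z$-torsion) are precisely those required by both Theorem \ref{CHLdprimitivo} and Proposition \ref{simplesdiamantedim1}, so no additional preparation is needed.

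For the ``only if'' direction, I assume that $R[\theta;\delta]$ satisfies $(\diamond)$. Since the derivation $\delta$ is nonzero and $R$ is $\delta$-primitive by hypothesis, I can directly invoke Theorem \ref{CHLdprimitivo} to conclude that $R$ is $\delta$-simple. Note that the assumption $\mbox{K.dim}(R)\le 1$ is not even used in this implication.

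For the ``if'' direction, I assume that $R$ is $\delta$-simple. Together with the standing hypotheses that $R$ is a Noetherian integral domain without $\Z$-torsion and $\mbox{K.dim}(R)\le 1$, the assumptions of Proposition \ref{simplesdiamantedim1} are satisfied, and that proposition immediately gives that $S=R[\theta;\delta]$ satisfies $(\diamond)$.

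There is no real obstacle here: everything is an invocation of previously established results, and the hypothesis ``$R$ is $\delta$-primitive'' in the corollary is only needed to feed Theorem \ref{CHLdprimitivo} in the forward direction (for the converse one just needs $\delta$-simplicity). If anything, the only point to double-check is that the non-vanishing of $\delta$ required by Theorem \ref{CHLdprimitivo} is guaranteed by the statement ``$R$ is $\delta$-primitive for some non-zero derivation $\delta$''.
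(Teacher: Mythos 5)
Your proof is correct and is exactly the argument the paper intends: the paper states the corollary "taking into account" Proposition \ref{simplesdiamantedim1} and Theorem \ref{CHLdprimitivo}, which is precisely your two-way invocation (the forward direction from $\delta$-primitivity plus $(\diamond)$ giving $\delta$-simplicity, the converse from $\delta$-simplicity plus $\mbox{K.dim}(R)\le 1$ giving $(\diamond)$). Your side remarks — that the Krull dimension bound is only needed for the converse and that the nonvanishing of $\delta$ is part of the hypothesis — are accurate.
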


\begin{cor}\label{diamanteprimitivo}
Let $R$ be an affine $K$-algebra which is an integral domain with a non-zero derivation $\delta$ such that $R[\theta; \delta]$ is primitive (or equivalently, $R$ is $\delta$-primitive). Then $$R[\theta; \delta] \ \mbox{satisfies} \ (\diamond) \ \ \mbox{if and only if} \ \  R \ \mbox{is} \ \mbox{$\delta$-simple and K.dim}(R) \leq 1 .$$
\end{cor}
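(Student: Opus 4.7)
The plan is to recognize that this corollary follows essentially immediately from three previously established results in the paper, namely Theorem \ref{CHLdprimitivo}, Theorem \ref{dfudsimplesdiamante}, and Proposition \ref{simplesdiamantedim1}. The only preliminary observation to make is that every affine $K$-algebra which is an integral domain automatically satisfies the standing hypotheses required by these results: it is Noetherian by the Hilbert basis theorem, and it is $\Z$-torsion-free because $K$ has characteristic zero (as fixed in the preliminaries) and $K \hookrightarrow R$.

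For the forward implication ($\Rightarrow$), I would assume $R[\theta;\delta]$ satisfies $(\diamond)$. Since $\delta \neq 0$ and $R$ is $\delta$-primitive, Theorem \ref{CHLdprimitivo} immediately gives that $R$ must be $\delta$-simple. Now $R$ is an affine $\delta$-simple domain whose differential operator ring has $(\diamond)$, so the affine-domain case of Theorem \ref{dfudsimplesdiamante} forces $\mbox{K.dim}(R) \leq 1$, as required.

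For the reverse implication ($\Leftarrow$), I would assume $R$ is $\delta$-simple with $\mbox{K.dim}(R) \leq 1$. Since $R$ is Noetherian, an integral domain, and has no $\Z$-torsion, Proposition \ref{simplesdiamantedim1} directly yields that $R[\theta;\delta]$ satisfies $(\diamond)$. (Note that the $\delta$-primitivity hypothesis is not used in this direction; it only served to set up the equivalence of primitivity of $R[\theta;\delta]$ with $\delta$-primitivity of $R$ via Theorem \ref{primitivoeqdprimitivo}.)

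There is essentially no technical obstacle here: the entire content lies in having already proved the harder results in Sections 2 and 3. The only thing to be careful about is the remark in parentheses in the statement, which is justified by Theorem \ref{primitivoeqdprimitivo} and should be cited for completeness so the reader understands why primitivity of $R[\theta;\delta]$ and $\delta$-primitivity of $R$ are interchangeable in the hypothesis.
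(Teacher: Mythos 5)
Your proof is correct and follows essentially the same route as the paper: Theorem \ref{CHLdprimitivo} to get $\delta$-simplicity, Theorem \ref{dfudsimplesdiamante} to bound the Krull dimension, and (for the converse) Proposition \ref{simplesdiamantedim1}, which is exactly what the paper's citation of Theorem \ref{dfudsimplesdiamante} reduces to in that direction. Your added remarks that an affine domain over a characteristic-zero field is Noetherian and $\Z$-torsion-free, and that Theorem \ref{primitivoeqdprimitivo} justifies the parenthetical equivalence, are correct and match the paper's (implicit and explicit, respectively) use of these facts.
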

\begin{proof}
($\Rightarrow$) Suppose that $R[\theta; \delta]$ satisfies $(\diamond)$. Since $R[\theta; \delta]$ is primitive, by Theorem \ref{primitivoeqdprimitivo}, it follows that $R$ is $\delta$-primitive and, by Theorem \ref{CHLdprimitivo}, $R$ is $\delta$-simple. The Theorem \ref{dfudsimplesdiamante} implies that $\mbox{K.dim}(R) \leq 1$.

($\Leftarrow$) It follows from Theorem \ref{dfudsimplesdiamante}.	
\end{proof}

The following is a straightforward consequence of Corollary \ref{diamanteprimitivo}.

\begin{cor}\label{exemplosdiamanteprimitivo}
Let $R = K[x_1, \ldots , x_n]$ or $K[x_1^{\pm 1}, \ldots , x_n^{\pm 1}]$ with non-zero $K$-derivation $\delta$ such that $R[\theta; \delta]$ is primitive. Then $$R[\theta; \delta] \ \mbox{satisfies} \ (\diamond) \ \ \mbox{if and only if} \ \ R \ \mbox{is} \ \mbox{$\delta$-simple and} \ n = 1 .$$
\end{cor}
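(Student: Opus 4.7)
The plan is to derive this directly from Corollary \ref{diamanteprimitivo}, so the work reduces to verifying that the two candidate rings fit the hypotheses of that corollary and computing their Krull dimensions.

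First I would check that in both cases $R$ is an affine $K$-algebra which is an integral domain. For $R = K[x_1,\ldots,x_n]$ this is immediate. For $R = K[x_1^{\pm 1},\ldots,x_n^{\pm 1}]$ one uses the finite presentation
\[
K[x_1^{\pm 1},\ldots,x_n^{\pm 1}] \ \simeq \ K[x_1,\ldots,x_n,y_1,\ldots,y_n]/\langle x_1y_1-1,\ldots,x_ny_n-1\rangle,
\]
so $R$ is affine, and since it is a localization of the polynomial ring at the multiplicative set generated by $x_1\cdots x_n$, it is a domain.

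Next I would record that $\mbox{K.dim}(R)=n$ in both cases. For the polynomial ring this is standard, and for the Laurent ring it follows either from the localization (which inverts elements but does not kill the longest chain $0\subsetneq(x_1-1)\subsetneq(x_1-1,x_2-1)\subsetneq\cdots$ of primes avoiding $x_1\cdots x_n$) or from $\mbox{K.dim}(R)=\mbox{tr.deg}_K\mathcal{F}(R)=\mbox{tr.deg}_K K(x_1,\ldots,x_n)=n$ via \cite[Corollary 14.29]{Sharp}.

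Finally I would apply Corollary \ref{diamanteprimitivo}: under the standing assumption that $\delta\neq 0$ and $R[\theta;\delta]$ is primitive, $R[\theta;\delta]$ satisfies $(\diamond)$ if and only if $R$ is $\delta$-simple and $\mbox{K.dim}(R)\leq 1$. Substituting $\mbox{K.dim}(R)=n$ gives the condition $n\leq 1$. Since the hypothesis $\delta\neq 0$ forces $n\geq 1$ (if $n=0$ then $R=K$ and every $K$-derivation vanishes), we conclude $n=1$, yielding the stated equivalence. I do not expect any genuine obstacle; the only point that requires care is justifying that $K[x_1^{\pm 1},\ldots,x_n^{\pm 1}]$ qualifies as an affine $K$-algebra, which is handled by the presentation above.
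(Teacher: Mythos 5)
Your proposal is correct and is exactly the route the paper intends: the corollary is stated there as a straightforward consequence of Corollary \ref{diamanteprimitivo}, and your verification that both rings are affine domains of Krull dimension $n$ (including the finite presentation of the Laurent ring) supplies precisely the details the paper omits.
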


\begin{cor}\label{quocienteprimitivo}
Let $K$ be an algebraically closed field, $R$ be an affine $K$-algebra which is an integral domain and $S = R[\theta; \delta]$. If $\delta$ is a locally nilpotent derivation of $R$, then every primitive quotients of $S$ have Krull dimension $\leq 1$.
\end{cor}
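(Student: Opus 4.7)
The plan is to reduce to Proposition \ref{simpleslocalnilp} by contracting a primitive ideal to $R$. Let $\mathcal{P}$ be a primitive (hence prime) ideal of $S=R[\theta;\delta]$ and set $P=\mathcal{P}\cap R$. By the Goodearl--Warfield result recalled at the end of the Preliminaries, $P$ is a prime $\delta$-ideal of $R$ satisfying either $\mathcal{P}=SP$ or $\delta(R)\subseteq P$, and I would split into these two cases.

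The case $\delta(R)\subseteq P$ is immediate: the induced derivation $\overline{\delta}$ on $R/P$ is zero, so $S/SP\cong (R/P)[\theta]$ is a commutative affine $K$-algebra. Then $S/\mathcal{P}$, being a commutative primitive ring, is a field which is a quotient of an affine $K$-algebra over the algebraically closed field $K$; by Zariski's lemma $S/\mathcal{P}\cong K$, so $\mbox{K.dim}(S/\mathcal{P})=0$. The main case is therefore $\mathcal{P}=SP$, in which $S/\mathcal{P}\cong (R/P)[\theta;\overline{\delta}]$, the ring $R/P$ is an affine $K$-algebra integral domain, the induced derivation $\overline{\delta}$ is locally nilpotent (as a quotient of a locally nilpotent derivation), and one may assume $\overline{\delta}\neq 0$ (otherwise we are back in the previous case). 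Since $S/\mathcal{P}$ is primitive, Theorem \ref{primitivoeqdprimitivo} applied to $R/P$ yields that $R/P$ is $\overline{\delta}$-primitive.

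The main obstacle is to upgrade $\overline{\delta}$-primitivity of $R/P$ to $\overline{\delta}$-simplicity, because once this is done Proposition \ref{simpleslocalnilp} applied to $R/P$ (a Noetherian integral domain without $\mathbb{Z}$-torsion, as $\mbox{char}(K)=0$) with the locally nilpotent derivation $\overline{\delta}$ gives at once $\mbox{K.dim}(S/\mathcal{P})=\mbox{K.dim}((R/P)[\theta;\overline{\delta}])\leq 1$. For the upgrade, let $M$ be a maximal ideal of $R/P$ witnessing $\overline{\delta}$-primitivity, i.e.\ containing no non-zero $\overline{\delta}$-ideal. For any $c\in (R/P)^{\overline{\delta}}$ the principal ideal $(R/P)c$ is a $\overline{\delta}$-ideal, so $c\in M$ would force $c=0$; thus $(R/P)^{\overline{\delta}}$ injects into $(R/P)/M$. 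Here the algebraic closedness of $K$ is essential: by Zariski's lemma $(R/P)/M=K$, so $(R/P)^{\overline{\delta}}=K$. Finally, given any non-zero $\overline{\delta}$-ideal $I$ of $R/P$, pick $0\neq a\in I$ and let $k$ be smallest with $\overline{\delta}^{k+1}(a)=0$ (which exists since $\overline{\delta}$ is locally nilpotent); then $\overline{\delta}^k(a)$ lies in $I\cap (R/P)^{\overline{\delta}}=I\cap K$ and is non-zero, so $I=R/P$. Hence $R/P$ is $\overline{\delta}$-simple, and the proof is complete.
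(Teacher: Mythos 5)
Your argument is correct, but it takes a genuinely different route from the paper's. The paper's proof first invokes \cite[Proposition 2.1]{CHL} to conclude that $S$ itself satisfies $(\diamond)$, observes that primitive quotients inherit $(\diamond)$, and then feeds this into Corollary \ref{diamanteprimitivo} (which rests on Theorem \ref{CHLdprimitivo} and the regularity machinery behind Theorem \ref{dfudsimplesdiamante}) to conclude that $R/(\mathcal{P}\cap R)$ is $\overline{\delta}$-simple of Krull dimension at most $1$, finishing with \cite[Theorem 2.10]{GW_Krull}. You bypass the property $(\diamond)$ entirely: after the same case split and the same appeal to Theorem \ref{primitivoeqdprimitivo}, you upgrade $\overline{\delta}$-primitivity to $\overline{\delta}$-simplicity directly, using the observation that a non-zero $\overline{\delta}$-ideal of a locally nilpotent derivation must contain a non-zero constant together with the Nullstellensatz identification of the constants with $K$, and then Proposition \ref{simpleslocalnilp} closes the argument. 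This is more self-contained (no appeal to \cite{CHL} at all) and makes visible exactly where algebraic closedness enters. One small point you should make explicit: the injection $(R/P)^{\overline{\delta}}\hookrightarrow (R/P)/M\cong K$ only shows that the constants form a subring of $K$, whereas your final step needs the non-zero constant $\overline{\delta}^{k}(a)$ to be a unit, i.e.\ $(R/P)^{\overline{\delta}}=K$. For this you need $K\subseteq (R/P)^{\overline{\delta}}$, so that the composite $K\to (R/P)^{\overline{\delta}}\to K$ is the identity and the injection is forced to be an equality. This inclusion does hold here --- either because $\delta$ is taken to be a $K$-derivation, or automatically because for a locally nilpotent derivation on a domain of characteristic zero the $\delta$-degree is additive on products, so every unit is a constant --- but as written the sentence ``so $(R/P)^{\overline{\delta}}=K$'' skips this justification.
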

\begin{proof}
Suppose that $\delta$ is a locally nilpotent derivation of $R$. It follows from \cite[Proposition 2.1]{CHL} that $S$ satisfies $(\diamond)$. Hence, so does its primitive quotients. Let $\mathcal{P}$ be any primitive ideal of $S$. By \cite[Theorem 3.22]{GW_Livro}, either $\mathcal{P} = S(\mathcal{P} \cap R)$ or $S/\mathcal{P}$ is commutative. The latter case implies that $S/\mathcal{P}$ is a field, that is, $\mbox{K.dim}(S/\mathcal{P}) = 0$. If $\mathcal{P} = S(\mathcal{P} \cap R)$, then $S/\mathcal{P} = S/S(\mathcal{P} \cap R) = R/(\mathcal{P} \cap R)[\theta; \overline{\delta}]$, where $\overline{\delta}$ is the derivation of $R/(\mathcal{P} \cap R)$ induced by $\delta$. Since $R/(\mathcal{P} \cap R)[\theta; \overline{\delta}]$ is primitive and satisfies $(\diamond)$, it follows from Corollary \ref{diamanteprimitivo} that $R/(\mathcal{P} \cap R)$ is $\overline{\delta}$-simple and $\mbox{K.dim}(R/(\mathcal{P} \cap R)) \leq 1$. By \cite[Theorem 2.10]{GW_Krull}, we have  $\mbox{K.dim}(R/(\mathcal{P} \cap R)[\theta; \overline{\delta}]) = \mbox{K.dim}(R/(\mathcal{P} \cap R)) \leq 1$. 	
\end{proof}

The following result provides a sufficient condition for a ring $R$ to be $\delta$-primitive when $R$ has Krull dimension one.

\begin{lem}\label{dprimitivokdimum}
Let $R$ be a $K$-algebra which is an integral domain of Krull dimension $1$ and $\delta$ a derivation of $R$. If there exists a maximal ideal $M$ of $R$ such that $\delta(M) \nsubseteq M$, then $R$ is $\delta$-primitive.
\end{lem}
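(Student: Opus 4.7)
The plan is to exploit the fact that $(M:\delta)$ is, by the material recalled in Section~1, both a $\delta$-ideal and a prime ideal of $R$, and show it must vanish under the Krull dimension $1$ hypothesis.

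First I would observe that $(M:\delta) \subseteq M$ always: if $r \in (M:\delta)$, then in particular $r = \delta^{0}(r) \in M$. I would also note that the hypothesis $\delta(M) \nsubseteq M$ forces this inclusion to be strict, since $(M:\delta) = M$ would be equivalent to $M$ being a $\delta$-ideal. Hence $(M:\delta) \subsetneq M$.

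Next I would invoke the fact cited in the preliminaries that, since $R$ is a $K$-algebra and $M$ is prime, $(M:\delta)$ is itself a prime ideal of $R$ (Proposition~1.1 of \cite{GW}). So $(M:\delta)$ is a prime ideal strictly contained in the maximal ideal $M$. Because $R$ is an integral domain of Krull dimension $1$, the only prime ideal strictly contained in any maximal ideal is $0$, and therefore $(M:\delta) = 0$.

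Finally, recalling that $(M:\delta)$ is the largest $\delta$-ideal of $R$ contained in $M$, the conclusion $(M:\delta) = 0$ says exactly that $M$ contains no non-zero $\delta$-ideal of $R$. By the definition of $\delta$-primitivity recalled at the start of Section~2, this shows that $R$ is $\delta$-primitive. There is no real obstacle here; the proof is essentially a direct application of the two preliminary facts (that $(M:\delta)$ is prime and that it is the largest $\delta$-ideal inside $M$) together with the dimension hypothesis.
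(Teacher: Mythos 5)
Your proof is correct and follows essentially the same route as the paper's: both arguments hinge on the fact that $(M:\delta)$ is a prime $\delta$-ideal strictly contained in $M$, which the Krull dimension $1$ hypothesis then forces to be zero (the paper phrases this as a contradiction with $\mathrm{ht}(M) \leq 1$ after assuming a non-zero $\delta$-ideal sits inside $M$, but the content is identical). No gaps.
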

\begin{proof}
Suppose that there exists a maximal ideal $M$ of $R$ such that $\delta(M) \nsubseteq M$. We claim that $M$ does not contain non-zero $\delta$-ideals. Otherwise, let $I$ be a non-zero $\delta$-ideal contained in $M$ and consider the $\delta$-ideal $(M : \delta)$. Since $M$ is not a $\delta$-ideal, we must have $(M : \delta) \subsetneq M$. Thus $$0 \subsetneq I \subseteq (M : \delta) \subsetneq M.$$ But, as $M$ is prime and $\mbox{char}(R/M) = 0$, we obtain that $(M : \delta)$ is prime (\cite[Proposition 1.1]{GW}). Hence, $\mbox{ht}(M) > 1$, a contradiction. Therefore $M$ does not contain non-zero $\delta$-ideals and the result follows.
\end{proof}

The following lemma is known in the literature and its proof can be obtained in \cite[Theorem 2.3.8]{Archer} or in \cite[Corollary 2.12]{GW_Krull}. We will denote by $\mbox{Max}(R)$ the maximal spectrum of a ring $R$.

\begin{lem}\cite[Theorem 2.3.8]{Archer}\label{dideaismaximais}
	Let $R$ be a commutative affine $K$-algebra with $K$-derivation $\delta$ and $M \in \mbox{Max}(R)$. Then $\delta(M) \subseteq M$ if and only if $\delta(R) \subseteq M$.
\end{lem}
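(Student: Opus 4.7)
The reverse implication is immediate since $M \subseteq R$, so all the content is in showing that $\delta(M) \subseteq M$ forces $\delta(R) \subseteq M$.

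My plan is to pass to the quotient field $R/M$. Since $\delta$ is a $K$-derivation and $M$ is a $\delta$-ideal by hypothesis, $\delta$ induces a well-defined $K$-derivation $\overline{\delta}$ on $R/M$. The goal then becomes showing $\overline{\delta} = 0$, which is equivalent to the desired inclusion $\delta(R) \subseteq M$.

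The key input is the Nullstellensatz (or Zariski's Lemma): because $R$ is an affine $K$-algebra and $M$ is maximal, $R/M$ is a finite field extension of $K$. Since $\mbox{char}(K) = 0$, this extension is automatically separable algebraic. I would then invoke the standard fact that any $K$-derivation on a separable algebraic field extension of $K$ must be zero: if $a \in R/M$ has minimal polynomial $f(x) \in K[x]$, then applying $\overline{\delta}$ to the relation $f(a) = 0$ and using that $\overline{\delta}$ kills $K$-coefficients yields $f'(a)\,\overline{\delta}(a) = 0$, and separability gives $f'(a) \neq 0$, whence $\overline{\delta}(a) = 0$.

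Therefore $\overline{\delta} \equiv 0$ on $R/M$, which translates back to $\delta(R) \subseteq M$, completing the forward direction. I do not expect any genuine obstacle here — the proof is essentially a two-line consequence of the Nullstellensatz together with the characteristic-zero hypothesis that is in force throughout the paper. The only mild subtlety is being careful that $\overline{\delta}$ is genuinely well-defined (which uses $\delta(M) \subseteq M$) and that it remains a $K$-derivation (which is automatic since $K$ embeds into $R/M$ and $\delta$ is a $K$-derivation).
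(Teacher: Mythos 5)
Your proof is correct; the paper does not reprove this lemma but simply cites Archer and Goodearl--Warfield, and your argument (pass to $R/M$, which by the Nullstellensatz is a finite, hence separable, extension of the characteristic-zero field $K$, on which every $K$-derivation vanishes) is exactly the standard one found in those references. No gaps.
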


The next result characterize differential operator rings $R[\theta; \delta]$ satisfying $(\diamond)$ when $R$ is an affine $K$-algebra which is an integral domain of Krull dimension $1$. 

\begin{thm}\label{kdimumdiamante}
	Let $R$ be an affine $K$-algebra which is an integral domain of Krull dimension $1$ with non-zero $K$-derivation $\delta$. Then $R$ is $\delta$-primitive. Consequently,
	$$R[\theta; \delta] \ \mbox{satisfies} \ (\diamond) \ \ \mbox{if and only if} \ \  R \ \mbox{is} \ \mbox{$\delta$-simple}.$$
\end{thm}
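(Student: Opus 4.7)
The plan is to establish $\delta$-primitivity first and then invoke Corollary \ref{dsimplesdiamante} for the equivalence. The key observation is that we have Lemma \ref{dprimitivokdimum}, which reduces the problem of showing $\delta$-primitivity (in Krull dimension $1$) to finding a single maximal ideal $M$ with $\delta(M) \nsubseteq M$, and Lemma \ref{dideaismaximais}, which rephrases this as $\delta(R) \nsubseteq M$.

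The first step is therefore to argue by contradiction that some maximal ideal $M$ satisfies $\delta(R) \nsubseteq M$. Suppose not: $\delta(R) \subseteq M$ for every $M \in \mathrm{Max}(R)$. Then $\delta(R)$ lies in the Jacobson radical $\mathcal{J}(R)$. Since $R$ is an affine $K$-algebra which is an integral domain, \cite[Theorem 5.3]{LAM} gives $\mathcal{J}(R) = 0$, forcing $\delta = 0$ and contradicting the hypothesis. Hence such an $M$ exists.

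Now by Lemma \ref{dideaismaximais} (applied in the affine $K$-algebra setting with $\delta$ a $K$-derivation), the condition $\delta(R) \nsubseteq M$ is equivalent to $\delta(M) \nsubseteq M$. Therefore Lemma \ref{dprimitivokdimum}, which requires only that $R$ be a $K$-algebra integral domain of Krull dimension $1$ with a maximal ideal not stable under $\delta$, yields that $R$ is $\delta$-primitive.

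For the final equivalence, note that $R$ is Noetherian (being affine), an integral domain of Krull dimension $\leq 1$, and has no $\Z$-torsion because $\mathrm{char}(K) = 0$. All hypotheses of Corollary \ref{dsimplesdiamante} are satisfied, so it gives directly that $R[\theta;\delta]$ satisfies $(\diamond)$ if and only if $R$ is $\delta$-simple. No substantial obstacle is expected; the only delicate point is remembering that the Jacobson radical of an affine domain over $K$ vanishes, which is what breaks the hypothetical situation where every maximal ideal is $\delta$-stable.
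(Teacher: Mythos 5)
Your proposal is correct and follows essentially the same route as the paper: both argue that if every maximal ideal were $\delta$-stable then Lemma \ref{dideaismaximais} would force $\delta(R)\subseteq\mathcal{J}(R)=0$ (using \cite[Theorem 5.3]{LAM}), contradicting $\delta\neq 0$, and then apply Lemma \ref{dprimitivokdimum} to get $\delta$-primitivity. The only cosmetic difference is that you conclude via Corollary \ref{dsimplesdiamante} (whose no-$\Z$-torsion hypothesis you correctly verify from $\mathrm{char}(K)=0$) while the paper cites Corollary \ref{diamanteprimitivo}; both are valid.
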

\begin{proof}
	We claim that there exists a maximal ideal which is not a $\delta$-ideal. Otherwise, $\delta(M) \subseteq M$ for all $M \in \mbox{Max}(R)$. Since $\delta$ is a $K$-derivation, by Lemma \ref{dideaismaximais}, we would have $\delta(R) \subseteq M$ for all $M \in \mbox{Max}(R)$, and so $$\delta(R) \subseteq \bigcap\{M \triangleleft R \,|\, M \in \mbox{Max}(R) \} = {\mathcal J}(R).$$ But, by \cite[Theorem 5.3]{LAM}, ${\mathcal J}(R) = 0$, and therefore $\delta = 0$, a contradiction. Therefore there exists a maximal ideal which is not $\delta$-ideal and, by Lemma \ref{dprimitivokdimum}, the result follows.
	
	The result is now a consequence of Corollary \ref{diamanteprimitivo}.
\end{proof}

As an application we completely classify the differential operator rings $K[x,x^{-1}][\theta; \delta]$ which satisfy the property $(\diamond)$.

\begin{cor}\label{laurentdiamante}
For any non-zero $K$-derivation $\delta$ of $K[x,x^{-1}]$ such that $\delta_{\mid_{K[x]}}$ is a $K$-derivation of $K[x]$, the following statements are equivalent:
	\begin{enumerate}
		\item $K[x,x^{-1}][\theta; \delta]$ satisfies $(\diamond)$;
		\item $\delta(x) = \alpha x^n$ for some $\alpha \in K \setminus \{0\}$ and $n \geq 0$;
		\item $K[x,x^{-1}]$ is $\delta$-simple.
	\end{enumerate}
\end{cor}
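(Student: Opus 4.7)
The plan is to split the cyclic equivalence into two parts. The equivalence (1)$\Leftrightarrow$(3) is immediate from Theorem \ref{kdimumdiamante}, since $R = K[x,x^{-1}]$ is an affine $K$-algebra which is an integral domain of Krull dimension $1$, so for any non-zero $K$-derivation $\delta$ that theorem already asserts $R[\theta;\delta]$ satisfies $(\diamond)$ iff $R$ is $\delta$-simple. Hence the real content lies in (2)$\Leftrightarrow$(3).

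For (2)$\Leftrightarrow$(3), I would set up as follows. The hypothesis that $\delta_{\mid K[x]}$ is a $K$-derivation of $K[x]$ forces $\delta(x) = p(x) \in K[x]$, and then $\delta = p(x)\partial_x$ on all of $K[x,x^{-1}]$. Write $p(x) = x^n q(x)$ with $q \in K[x]$, $q(0) \neq 0$, $n \geq 0$ (and $q \neq 0$ since $\delta \neq 0$). Observe that $R = K[x,x^{-1}]$ is a PID, hence a UFD, whose prime elements, up to associates, are exactly the irreducible polynomials $f \in K[x]$ with $f(0) \neq 0$ (since $x$ becomes a unit). Because $R$ is a PID, $\delta$-simplicity of $R$ is equivalent to $R$ having no non-unit Darboux element; and by the UFD fact recalled in the Preliminaries (every irreducible factor of a Darboux element is itself Darboux), this further reduces to showing that no prime element of $R$ is Darboux.

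For (2)$\Rightarrow$(3), assuming $p(x) = \alpha x^n$ with $\alpha \in K\setminus\{0\}$, any prime $f$ of $R$ satisfies $\delta(f) = \alpha x^n f'(x)$; if $f\mid \delta(f)$ in $R$, then (using $\gcd(f,x)=1$ in $K[x]$ because $f(0)\neq 0$) one gets $f \mid \alpha f'$ in $K[x]$, forcing $f'=0$ by a degree comparison, whence $f \in K$, contradicting irreducibility. For (3)$\Rightarrow$(2), suppose $q$ is not a constant and let $f \in K[x]$ be an irreducible factor of $q$; necessarily $f(0)\neq 0$ (since $q(0)\neq 0$), so $f$ is a prime of $R$, and $\delta(f) = f'(x) x^n q(x)$ is visibly divisible by $f$ in $R$. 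Then $f$ is Darboux and $(f)$ is a proper non-zero $\delta$-ideal of $R$, contradicting $\delta$-simplicity. Hence $q$ is a non-zero constant and $p(x) = \alpha x^n$ as required. The only mild obstacle is pinning down the correct class of primes in $R = K[x,x^{-1}]$; once that is clear, the argument is an elementary degree count in $K[x]$.
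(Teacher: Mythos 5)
Your proposal is correct. The decomposition is the same as the paper's: (1)$\Leftrightarrow$(3) is quoted from Theorem \ref{kdimumdiamante}, and the substance is (2)$\Leftrightarrow$(3), which in both arguments ultimately rests on the same divisibility/degree computation --- an irreducible $f\in K[x]$ with $f(0)\neq 0$ dividing $\alpha x^n f'$ forces $f'=0$, impossible in characteristic zero. The packaging differs slightly: the paper works upstairs in $K[x]$, identifies the non-zero $\delta$-prime ideals with the maximal $\delta$-ideals via \cite[Corollary 1.4]{GW}, and invokes the localization criterion \cite[Proposition 2.8]{GW} to reduce $\delta$-simplicity of $K[x]_x$ to showing every maximal $\delta$-ideal of $K[x]$ contains $x$; you instead argue intrinsically in $R=K[x,x^{-1}]$, using that it is a PID whose primes are (associates of) the irreducibles of $K[x]$ coprime to $x$, so that $\delta$-simplicity is equivalent to the absence of prime Darboux elements. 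This buys you a uniform treatment of $n=0$ and $n>0$ and avoids the two Goodearl--Warfield citations at the cost of the (routine) verifications that localization preserves the PID property and that being Darboux is invariant under associates; your (3)$\Rightarrow$(2) is likewise the paper's contrapositive construction of a non-trivial $\delta$-ideal, just generated by an irreducible factor of $q$ rather than by $\delta(x)$ itself.
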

\begin{proof}Let $R = K[x,x^{-1}] = K[x]A^{-1} = K[x]_x$, where $A = \{x^i \,|\, i\geq 0\}$, and recall that any $K$-derivation of $K[x]$ is of the form $\delta = a \partial_x$ for some $a \in K[x]$. Moreover, note that any derivation $\delta$ of $K[x]$ extends uniquely to a derivation of $R$ via the quotient's rule $\delta \left(rs^{-1}\right) = (\delta(r)s - r\delta(s))s^{-2}$ for all $rs^{-1} \in R$.

$(1) \Leftrightarrow (3)$ Follows from Theorem \ref{kdimumdiamante}.

$(2) \Rightarrow (3)$ Suppose that $\delta(x) = \alpha x^n$ for some $\alpha \in K \setminus \{0\}$ and $n \geq 0$. If $n = 0$, that is, $\delta(x) = \alpha$, where $\alpha \in K\setminus \{0\}$, then $R = K[x]_x$ is $\delta$-simple. Now consider the case when $n > 0$. Note that the non-zero $\delta$-prime ideals of $K[x]$ are precisely the maximal ideals of $K[x]$ that are $\delta$-ideals (\cite[Corollary 1.4]{GW}). Hence, by \cite[Proposition 2.8]{GW}, it is enough to show that every maximal ideal of $K[x]$ which is a $\delta$-ideal contains $x$. Given any maximal ideal $M$ which is a $\delta$-ideal, it follows that $M = K[x]p$, for some irreducible element $p \in K[x]$, and $p \mid \delta(p)$. But $p \mid \delta(p) = \alpha x^n \partial_x(p)$ implies $p \mid x$, and therefore $x \in K[x]p = M$.  
	
$(3) \Rightarrow (2)$ Suppose that $\delta(x) = a$, for some $a \in K[x]\setminus\{0\}$, such that $a \neq \alpha x^n$ for all $\alpha \in K \setminus \{0\}$ and $n \geq 0$. Consider the $\delta$-ideal $I = K[x]a$. Note that $I \cap A = \varnothing$, and as $a \neq 0$, we obtain $IA^{-1}$ is a non-trivial $\delta$-ideal of $R$. This shows that $R$ is not $\delta$-simple.
\end{proof}

\begin{rem}\label{diamantenaolocal}
Let $K$ be an algebraically closed field of characteristic zero and let $R$ be a commutative affine $K$-algebra with derivation $\delta$. In \cite[Proposition 2.1]{CHL}, Carvalho, Hatipo\u{g}lu and Lomp showed that $\delta$ being locally nilpotent implies that $R[\theta; \delta]$ satisfies $(\diamond)$. The converse is not true in general. Indeed, consider $R = K[x,x^{-1}]$ with the $K$-derivation $\delta = x\partial_x$. Corollary \ref{laurentdiamante} shows that $R[\theta; \delta]$ satisfies $(\diamond)$, but $\delta$ is not locally nilpotent, since $\delta^n(x) = x \neq 0$ for all $n \geq 0$.
\end{rem}


\section{Affine Algebras of Krull Dimension 2}

In the present section, we drop the condition of primitivity in $S = R[\theta; \delta]$. If $S$ satisfies $(\diamond)$, so does its primitive quotients. Given a commutative Noetherian $K$-algebra $R$ with a derivation $\delta$ and $\mathcal{P}$ a primitive ideal of $S$, either $\mathcal{P} = (\mathcal{P} \cap R)S$ or $S/\mathcal{P}$ is commutative (see \cite[Theorem 3.22]{GW_Livro}). If $\mathcal{P}$ is primitive such that $\mathcal{P} = (\mathcal{P} \cap R)S$, conditions for $S/\mathcal{P}$ to satisty $(\diamond)$ have been presented in the previous section. In some cases, the study of $(\diamond)$ can be reduced to the study of $(\diamond)$ on its primitives quotients.

\begin{lem}\label{quocientesprimitivoshl}\cite[Lemma 2.5]{HL}
	Suppose that $S$ is a Noetherian algebra such that every primitive ideal $\mathcal{P}$ of $S$ contains an ideal $\mathcal{Q}\subseteq \mathcal{P}$ which has a normalizing sequence of generators and $S/\mathcal{Q}$ satisfies $(\diamond)$. Then $S$ satisfies $(\diamond)$.
\end{lem}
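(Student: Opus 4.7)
The plan is to fix a cyclic essential extension $E$ of a simple left $S$-module $M$, set $\mathcal{P} := \mbox{ann}_S(M)$ (a primitive ideal), and let $\mathcal{Q} \subseteq \mathcal{P}$ be as furnished by the hypothesis: generated by a normalizing sequence $a_1, \ldots, a_n$ with $S/\mathcal{Q}$ satisfying $(\diamond)$. I aim to show $E$ is Artinian by reducing, through a finite filtration, to the $(\diamond)$-property on $S/\mathcal{Q}$.

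First I would show that $\mathcal{Q}^k E = 0$ for some $k \geq 1$. For this, I invoke the Artin--Rees property enjoyed by ideals with a normalizing sequence of generators in a left Noetherian ring: applied to the submodule $M \subseteq E$, it yields $k$ such that $\mathcal{Q}^k E \cap M \subseteq \mathcal{Q} M = 0$, the last equality because $\mathcal{Q} \subseteq \mathcal{P} = \mbox{ann}_S(M)$. Since $M$ is essential in $E$, any submodule of $E$ with trivial intersection with $M$ must be zero, so $\mathcal{Q}^k E = 0$.

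Now consider the ascending filtration $0 = E_0 \subseteq E_1 \subseteq \cdots \subseteq E_k = E$ with $E_i := \mbox{ann}_E(\mathcal{Q}^i)$. The base layer $E_1$ is a finitely generated $S/\mathcal{Q}$-module which is an essential extension of the simple $S/\mathcal{Q}$-module $M$, because every $S/\mathcal{Q}$-submodule of $E_1$ is an $S$-submodule of $E$ and therefore meets $M$. The $(\diamond)$-hypothesis on $S/\mathcal{Q}$ is equivalent to the statement that every finitely generated essential extension of a simple module is Artinian (via the locally-Artinian-injective-envelope reformulation), so $E_1$ is Artinian over $S/\mathcal{Q}$.

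For the inductive step ($i \geq 1$), left multiplication by each generator $a_j$ sends $E_{i+1}$ into $E_i$, producing an $S/\mathcal{Q}$-linear map
\begin{equation*}
\phi_i : E_{i+1}/E_i \longrightarrow \bigoplus_{j=1}^n E_i/E_{i-1}, \quad e + E_i \mapsto (a_1 e + E_{i-1}, \ldots, a_n e + E_{i-1}).
\end{equation*}
If $\phi_i(e + E_i) = 0$, then $a_j e \in E_{i-1}$ for all $j$, whence $\mathcal{Q} e = \sum_j S a_j e \subseteq E_{i-1}$ (using that $E_{i-1}$ is $S$-stable), and consequently $\mathcal{Q}^i e \subseteq \mathcal{Q}^{i-1} E_{i-1} = 0$, so $e \in E_i$; thus $\phi_i$ is injective. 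By induction on $i$, each $E_i/E_{i-1}$ is Artinian, and hence $E = E_k$ is Artinian. The main obstacle is the Artin--Rees step: securing $\mathcal{Q}^k E = 0$ requires the non-commutative Artin--Rees lemma for normalizing-sequence-generated ideals, which is precisely the technical input motivating the normalizing hypothesis; the subsequent filtration argument becomes routine bookkeeping once the key annihilation is in hand.
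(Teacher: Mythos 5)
The paper does not prove this statement; it is imported verbatim from \cite[Lemma 2.5]{HL}, so your argument can only be measured against the standard proof of that result. Your overall architecture --- kill $E$ by a power of $\mathcal{Q}$ using an Artin--Rees-type argument, then climb the filtration $E_i=\mathrm{ann}_E(\mathcal{Q}^i)$ --- is the right one, and the treatment of the bottom layer $E_1$ is correct. The inductive step, however, has a genuine gap: the map $\phi_i(e+E_i)=(a_1e+E_{i-1},\dots,a_ne+E_{i-1})$ is not $S/\mathcal{Q}$-linear. For $s\in S$ one has $a_j(se)-s(a_je)=(a_js-sa_j)e$, and since $a_j$ is only normalizing modulo $Sa_1+\cdots+Sa_{j-1}$, the most one can say is $a_js-sa_j\in\mathcal{Q}$, so that $(a_js-sa_j)e\in\mathcal{Q}E_{i+1}\subseteq E_i$ --- but linearity into $E_i/E_{i-1}$ would require this error to lie in $E_{i-1}$, which fails in general (even for a single normal element the map $e\mapsto ae$ is only $\sigma$-semilinear). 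Thus $\phi_i$ is merely an injective additive map, and an additive injection into an Artinian module does not transfer the descending chain condition on submodules: images of submodules need not be submodules, and distinct submodules with the same ``generated image'' cannot be separated. The standard repair peels off one generator at a time: for a normal element $a$, left multiplication by $a$ induces an inclusion-preserving bijection between the submodules of $E/\mathrm{ann}_E(a)$ and those of the submodule $aE$, so Artinianness passes across this semilinear map; one then runs a double induction, on the length of the normalizing sequence (treating $\mathrm{ann}_E(a_1)$ as a module over $S/Sa_1$, where $\bar a_2,\dots,\bar a_n$ is a shorter normalizing sequence) and on the index $k$ with $\mathcal{Q}^kE=0$ (treating $a_1E\subseteq\mathcal{Q}E$).

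A second, smaller issue concerns your first step. You invoke ``the Artin--Rees property enjoyed by ideals with a normalizing sequence of generators in a left Noetherian ring.'' The full AR property is known for polycentral ideals and for an ideal generated by a single normal element, but for general polynormal ideals it is recorded as an open question in McConnell--Robson. What is actually available there is exactly the weaker statement you use: if $N$ is an essential submodule of a finitely generated module $E$ with $\mathcal{Q}N=0$ and $\mathcal{Q}$ is generated by a normalizing sequence, then $\mathcal{Q}^kE=0$ for some $k$. So this step survives, but it should be justified by that result rather than by the full AR property.
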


In this section, we consider the case when $R$ is an affine $K$-algebra of Krull dimension 2. The main result is the Theorem \ref{primitivodiamante2} when $R = \C[x,y]$. We will denote by $\mathcal{F}_{\delta}$ the set of all $\delta$-ideals of $R$, that is, $\mathcal{F}_{\delta} = \{I\triangleleft R \,|\, I \ \mbox{is a $\delta$-ideal}\}$. We begin by considering the case when $R$ does not have maximal ideals which are $\delta$-ideals, that is, $\mbox{Max}(R) \cap \mathcal{F}_{\delta} = \varnothing$.

\begin{lem}\label{krull}
Let $R$ be a commutative Noetherian $K$-algebra of Krull dimension $2$ with derivation $\delta$ and $S = R[\theta; \delta]$. If $P$ is a non-zero prime $\delta$-ideal of $R$ which is not contained in any maximal ideal which is a $\delta$-ideal, then $\mbox{K.dim}_S(S/SP) = 1$.
\end{lem}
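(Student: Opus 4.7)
The plan is to reduce the computation of $\mbox{K.dim}_S(S/SP)$ to an application of \cite[Theorem 2.10]{GW_Krull}. First I would observe that $P$ cannot itself be maximal, for otherwise $P$ would be a maximal ideal which is a $\delta$-ideal containing $P$, contradicting the hypothesis. Since $\mbox{K.dim}(R) = 2$ and $P$ is a non-zero, non-maximal prime ideal, we must have $\mbox{ht}(P) = 1$, and consequently $R/P$ is a Noetherian $K$-algebra which is an integral domain of Krull dimension $1$.

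Next, recalling from the preliminaries that for any $\delta$-ideal $I$ of $R$ the left ideal $SI$ is in fact two-sided and $S/SI \simeq (R/I)[\theta; \overline{\delta}]$ as rings, with $\overline{\delta}$ denoting the induced derivation, I would apply this to $I=P$ to identify $S/SP$ with $(R/P)[\theta; \overline{\delta}]$. Because $SP$ is two-sided, it annihilates $S/SP$, so the $S$-submodules of $S/SP$ coincide with the left ideals of the quotient ring; in particular
$$\mbox{K.dim}_S(S/SP) \;=\; \mbox{K.dim}\bigl((R/P)[\theta; \overline{\delta}]\bigr).$$

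It remains to verify that the right-hand side equals $1$. For this I would apply \cite[Theorem 2.10]{GW_Krull} to the ring $R/P$ with derivation $\overline{\delta}$, exactly as in the arguments of Propositions \ref{simplesdiamantedim1} and \ref{simpleslocalnilp}. Every maximal ideal of $R/P$ has the form $\overline{M} = M/P$ for some maximal ideal $M$ of $R$ with $P \subseteq M$; by the hypothesis on $P$ such an $M$ is not a $\delta$-ideal, so $\delta(M)\nsubseteq M$ and therefore $\overline{\delta}(\overline{M})\nsubseteq \overline{M}$. Moreover, $\mbox{char}(R/M) = 0$ because $R$ is a $K$-algebra. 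These are precisely the hypotheses required by \cite[Theorem 2.10]{GW_Krull}, yielding $\mbox{K.dim}((R/P)[\theta; \overline{\delta}]) = \mbox{K.dim}(R/P) = 1$, which finishes the proof. The most delicate point is the passage from the $S$-module Krull dimension of $S/SP$ to the ring-theoretic Krull dimension of the quotient, but this reduction is automatic once one checks that $SP$ is two-sided.
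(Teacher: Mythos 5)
Your proof is correct, but it reaches the conclusion by a genuinely different route than the paper. You convert the module-theoretic statement into a ring-theoretic one: since $P$ is a $\delta$-ideal, $SP$ is two-sided, $S/SP \simeq (R/P)[\theta;\overline{\delta}]$, and $\mbox{K.dim}_S(S/SP)$ equals the Krull dimension of this quotient ring; you then apply \cite[Theorem 2.10]{GW_Krull} to the Krull-dimension-one ring $R/P$, exactly as the paper does in Propositions \ref{simplesdiamantedim1} and \ref{simpleslocalnilp}. The paper instead stays at the module level: it invokes \cite[Proposition 2.7]{GW_Krull}, which gives $\mbox{K.dim}_S(S/SP) = 1 + \max\{\mbox{K.dim}_S(S/SQ)\}$ over primes $Q \supsetneq P$, and then uses the argument of \cite[Lemma 2.4]{H} to show that $S/SM$ is a simple left $S$-module for each maximal $M \supseteq P$ (this is where $\delta(M)\nsubseteq M$ and $\mbox{char}(R/M)=0$ enter), forcing the maximum to be $0$. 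Both arguments draw on the same Goodearl--Warfield paper and use the hypothesis on $P$ in the same way; yours is slightly more economical in that it reuses a theorem already cited elsewhere in the text, whereas the paper's version produces the extra (here unused) information that the composition factors sitting above $S/SP$ are the simple modules $S/SM$. One point common to both proofs: the claim that every prime properly containing $P$ is maximal (equivalently, your $\mbox{K.dim}(R/P)=1$) tacitly uses $\mbox{ht}(P)\geq 1$, which follows from ``$P$ non-zero'' only when $R$ is a domain --- the only situation in which the lemma is applied --- so this is not a defect of your argument relative to the paper's.
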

\begin{proof}
Let $P$ be as in the hypothesis. Note that $P$ is not maximal because $P$ is a $\delta$-ideal. By \cite[Proposition 2.7]{GW_Krull}, we have $\mbox{K.dim}_S(S/SP) = m + 1$, where $$m = \mbox{max}\{\mbox{K.dim}_S(S/SQ) \, | \, Q \in \mbox{Spec}(R) \ \mbox{and}\  Q \supsetneq P\}.$$ Since $R$ has Krull dimension $2$, the only prime ideals that properly contain $P$ are the maximal ones. Given any maximal ideal $M$ of $R$ containing $P$, by hypotheses, we must have $\delta(M) \nsubseteq M$. Moreover, as $\mbox{char}(R/M) = 0$, it follows  as in the proof of \cite[Lemma 2.4]{H} that $S/SM$ is a simple left $S$-module, and so $\mbox{K.dim}_S(S/SM) = 0$. This shows that $m = 0$. Therefore $\mbox{K.dim}_S(S/SP) = 1$, as desired.
\end{proof}

Recall that an element $a$ in a ring $S$ is called a {\it normal element} if $Sa = aS$. Note that if $S = R[\theta; \delta]$ and $a \in R$ is a Darboux element, then $a$ is a normal element in $S$. In fact, as $\delta(a) = ba$ for some $b \in R$, we have $\theta a = a\theta + \delta(a) = a(\theta + b)$. Thus $\theta^n a = a(\theta + b)^n$ and also $a\theta^n = (\theta - b)^na$, showing that $Sa = aS$.

The next result shows that the property $(\diamond)$ also can be related to the primitiveness of $S$ in our setting.

\begin{thm}\label{primitivodiamante}
Let $R$ be a UFD of Krull dimension $2$ which is an affine $K$-algebra. Suppose that $\delta$ is a derivation of $R$ such that $\mbox{Max}(R) \cap \mathcal{F}_{\delta} = \varnothing$. Then the following conditions are equivalent:
	\begin{enumerate}
		\item $S = R[\theta; \delta]$ satisfies $(\diamond)$;
		\item $R$ is not $\delta$-primitive;
		\item $S$ is not primitive.
	\end{enumerate}
\end{thm}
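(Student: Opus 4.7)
\emph{Strategy.} The implications $(1)\Rightarrow(3)$ and $(2)\Leftrightarrow(3)$ should be immediate from results already proved. If $\delta=0$, then every ideal of $R$ is a $\delta$-ideal, violating $\mbox{Max}(R)\cap\mathcal{F}_\delta=\varnothing$; hence $\delta\neq 0$ and Theorem \ref{primitivoeqdprimitivo} yields $(2)\Leftrightarrow(3)$. For $(1)\Rightarrow(3)$: if $S$ were primitive and satisfied $(\diamond)$, Corollary \ref{diamanteprimitivo} would force $R$ to be $\delta$-simple with $\mbox{K.dim}(R)\leq 1$, contradicting $\mbox{K.dim}(R)=2$.

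The heart of the proof is $(2)\Rightarrow(1)$. My plan is to verify the hypothesis of Lemma \ref{quocientesprimitivoshl}: for every primitive ideal $\mathcal{P}\triangleleft S$ I will exhibit an ideal $\mathcal{Q}\subseteq\mathcal{P}$ with a normalizing sequence of generators such that $S/\mathcal{Q}$ satisfies $(\diamond)$. Set $P=\mathcal{P}\cap R$, which is a prime $\delta$-ideal. Since $R$ is not $\delta$-primitive, $S$ is not primitive by $(2)\Leftrightarrow(3)$, so $\mathcal{P}\neq 0$, and \cite[Lemma 3.18]{GW_Livro} then gives $P\neq 0$. Because $\mbox{Max}(R)\cap\mathcal{F}_\delta=\varnothing$, the prime $P$ cannot be maximal; in the UFD $R$ of Krull dimension $2$ this forces $\mbox{ht}(P)=1$ and $P=Rp$ for some irreducible Darboux element $p$, which is therefore normal in $S$.

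By \cite[Lemma 3.22]{GW_Livro} either $\mathcal{P}=SP=Sp$ or $\delta(R)\subseteq P$. In the first case I choose $\mathcal{Q}=Sp$, with normalizing sequence $(p)$, and must verify that $S/Sp\simeq R/P[\theta;\overline\delta]$ satisfies $(\diamond)$. Here $\overline\delta\neq 0$ (otherwise $R/P[\theta]$ would be a commutative affine $K$-algebra of Krull dimension $2$ whose zero ideal is primitive, absurd). The key step is to show that $R/P$ is $\overline\delta$-simple: if $I\triangleleft R$ were a $\delta$-ideal with $P\subsetneq I\subsetneq R$ and $M$ any maximal ideal containing $I$, then $M\notin\mathcal{F}_\delta$ gives $I\subseteq(M:\delta)\subsetneq M$, while \cite[Proposition 1.1]{GW} (using $\mbox{char}(R/M)=0$) says $(M:\delta)$ is prime; the resulting chain $P\subsetneq(M:\delta)\subsetneq M$ of three distinct primes forces $\mbox{ht}(M)\geq 3$, contradicting $\mbox{K.dim}(R)=2$. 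Hence $R/P$ is $\overline\delta$-simple and Proposition \ref{simplesdiamantedim1} delivers $(\diamond)$ for $S/Sp$. In the second case $S/Sp=R/P[\theta]$ is commutative; since $\mathcal{P}/Sp$ is primitive in a commutative ring it is maximal, so $S/\mathcal{P}$ is a field and trivially satisfies $(\diamond)$. I then take $\mathcal{Q}=\mathcal{P}$, with normalizing sequence obtained by starting with $p$ and appending lifts of a finite generating set of the ideal $\mathcal{P}/Sp$ of the commutative Noetherian ring $R/P[\theta]$; each lift is automatically normal modulo the previous terms.

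The main obstacle is the $\overline\delta$-simplicity of $R/P$ in the first case: this is precisely where the UFD hypothesis (to produce $p$), the assumption $\mbox{Max}(R)\cap\mathcal{F}_\delta=\varnothing$ (to make $(M:\delta)$ strictly smaller than $M$), and the bound $\mbox{K.dim}(R)=2$ (to forbid the three-prime chain) combine.
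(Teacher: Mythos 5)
Your proof is correct, and your handling of $(1)\Rightarrow(3)$ (via Corollary \ref{diamanteprimitivo}) and $(2)\Leftrightarrow(3)$ (via Theorem \ref{primitivoeqdprimitivo}, after noting $\delta\neq 0$) matches the paper's. For the main implication you take the same overall route through Lemma \ref{quocientesprimitivoshl} and correctly identify $P=\mathcal{P}\cap R=Rp$ with $p$ an irreducible Darboux (hence normal) element, but you diverge in two places. First, the paper observes that the alternative $\delta(R)\subseteq P$ from \cite[Theorem 3.22]{GW_Livro} cannot occur at all: any maximal ideal $M\supseteq P$ would satisfy $\delta(M)\subseteq\delta(R)\subseteq P\subseteq M$, contradicting $\mbox{Max}(R)\cap\mathcal{F}_{\delta}=\varnothing$. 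Your second case is therefore vacuous; your treatment of it (commutative quotient, primitive implies maximal, normalizing sequence extended through $S/Sp$) is internally sound but unnecessary. Second, to show that $S/SP$ satisfies $(\diamond)$ the paper invokes Lemma \ref{krull} to get $\mbox{K.dim}_S(S/SP)=1$, so $S/SP$ is a Noetherian domain of Krull dimension one and \cite[Theorem 10]{Krause} applies; you instead prove that $R/P$ is $\overline{\delta}$-simple via the chain $0\subsetneq P\subsetneq(M:\delta)\subsetneq M$ against $\mbox{K.dim}(R)=2$, and then apply Proposition \ref{simplesdiamantedim1}. Both roads end at Krause's theorem; yours extracts the stronger structural fact that $R/P$ is $\overline{\delta}$-simple (which the paper never needs), at the cost of an extra appeal to \cite[Proposition 1.1]{GW}, while the paper's use of Lemma \ref{krull} is slightly more economical. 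One small point to make explicit in your version: Proposition \ref{simplesdiamantedim1} requires $\mbox{K.dim}(R/P)\leq 1$, which holds because $R$ is an affine domain of dimension $2$ and $\mbox{ht}(P)=1$.
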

\begin{proof}
	$(1) \Rightarrow (2)$ Suppose that $S$ satisfies $(\diamond)$. Since $\mbox{K.dim}(R) = 2$, it follows from Corollary \ref{diamanteprimitivo} that $R$ is not $\delta$-primitive.
	
	$(2) \Leftrightarrow (3)$ Since $\mbox{Max}(R) \cap \mathcal{F}_{\delta} = \varnothing$, it follows that $\delta \neq 0$. The equivalence is now a consequence of Theorem \ref{primitivoeqdprimitivo}.
	
	$(3) \Rightarrow (1)$ Assume that $S$ is not primitive. Let $\mathcal{P}$ be any primitive ideal of $S$. Thus $\mathcal{P}$ is a non-zero prime ideal of $S$. It follows from \cite[Lemma 3.19 and Theorem 3.22 (a)]{GW_Livro} that $P = \mathcal{P}\cap R$ is a non-zero prime $\delta$-ideal of $R$ (note that $\delta \neq 0$). Moreover, by \cite[Theorem 3.22 (b)]{GW_Livro}, either $\mathcal{P} = SP$ or $\delta(R)\subseteq P$. The later case does not apply. In fact, suppose that $\delta(R)\subseteq P$ and consider a maximal ideal $M$ of $R$ such that $M \supseteq P$. Hence, $\delta(M) \subseteq \delta(R) \subseteq P \subseteq M$, contradicting the fact that $\mbox{Max}(R) \cap \mathcal{F}_{\delta} = \varnothing$. Thus $\mathcal{P} = SP$ and, by Lemma \ref{krull}, $S/\mathcal{P} = S/SP$ is a Noetherian domain of Krull dimension $1$, and therefore satisfies $(\diamond)$.
	
	Now, since $P \not\in \mbox{Max}(R)$, we must have $P = Rp$ for some irreducible Darboux elements $p \in R$. This shows that $\mathcal{P} = SP = S(Rp) = Sp$ is generated by a normal element. Applying Lemma \ref{quocientesprimitivoshl} we conclude that $S$ satisfies $(\diamond)$.
\end{proof}

\begin{prop}\label{contexemprimitivodiamante}
Let $R$ be an affine $K$-algebra which is an integral domain and let $\delta$ be a $K$-derivation of $R$. Suppose that there exist $M \in \mbox{Max}(R) \cap \mathcal{F}_{\delta}$ and a prime $\delta$-ideal $P$ such that $M \supseteq P$ and $\mbox{ht}(P) = \mbox{K.dim}(R) - 1$. If $S = R[\theta; \delta]$ satisfy $(\diamond)$, then $\delta(R) \subseteq P$.
\end{prop}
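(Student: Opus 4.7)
The plan is to pass to the quotient ring $R/P$ and reduce to the Krull dimension one case, where Theorem \ref{kdimumdiamante} applies. First I would observe that because $R$ is an affine $K$-algebra which is a domain, the standard dimension theory of affine domains over a field gives the formula $\mbox{K.dim}(R/P) + \mbox{ht}(P) = \mbox{K.dim}(R)$ and also that every maximal ideal of $R$ has height $\mbox{K.dim}(R)$. From the hypothesis $\mbox{ht}(P) = \mbox{K.dim}(R) - 1$ I therefore deduce $P \subsetneq M$ (so $M/P$ is a non-zero proper ideal of $R/P$) and $\mbox{K.dim}(R/P) = 1$. Note that this also forces $\mbox{K.dim}(R) \geq 1$, which is implicit in the hypothesis.

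Let $\bar{\delta}$ denote the $K$-derivation induced by $\delta$ on the affine $K$-algebra domain $R/P$. Since $M$ is a $\delta$-ideal strictly containing $P$, the quotient $M/P$ is a non-zero proper $\bar{\delta}$-ideal of $R/P$, so $R/P$ is \emph{not} $\bar{\delta}$-simple. From the preliminaries we have the ring isomorphism $S/SP \simeq (R/P)[\theta; \bar{\delta}]$, and the property $(\diamond)$ passes to quotient rings: any cyclic essential extension of a simple $S/SP$-module is annihilated by $SP$, hence is a cyclic essential extension of a simple $S$-module, which is Artinian by assumption. Consequently $(R/P)[\theta; \bar{\delta}]$ also satisfies $(\diamond)$.

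I would then argue by contradiction, assuming $\bar{\delta} \neq 0$. Applying Theorem \ref{kdimumdiamante} to the affine $K$-algebra domain $R/P$ of Krull dimension $1$ with non-zero $K$-derivation $\bar{\delta}$, the property $(\diamond)$ for $(R/P)[\theta; \bar{\delta}]$ forces $R/P$ to be $\bar{\delta}$-simple, contradicting the previous paragraph. Therefore $\bar{\delta} = 0$ on $R/P$, which is exactly $\delta(R) \subseteq P$, as required. The only delicate point I foresee is the strict inclusion $P \subsetneq M$, which is what fixes $R/P$ in Krull dimension exactly one rather than zero; beyond that the argument is a clean reduction to Theorem \ref{kdimumdiamante}.
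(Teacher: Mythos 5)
Your proof is correct and follows essentially the same route as the paper: pass to $S/SP \simeq (R/P)[\theta;\overline{\delta}]$, note that $R/P$ is an affine domain of Krull dimension $1$ which is not $\overline{\delta}$-simple because of $M/P$, and invoke Theorem \ref{kdimumdiamante} to force $\overline{\delta}=0$. The only (harmless) difference is that you obtain $P \subsetneq M$ from the equidimensionality of affine domains, whereas the paper gets it from Lemma \ref{dideaismaximais} under the contradiction hypothesis $\delta(R)\nsubseteq P$.
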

\begin{proof}
Suppose that $\delta(R) \nsubseteq P$. Then $\delta$ induces a non-zero $K$-derivation $\overline{\delta}$ in $R/P$. Since $P$ is a $\delta$-ideal and $\delta(R) \nsubseteq P$, $P$ is not maximal by Lemma \ref{dideaismaximais}. Thus $P \subsetneq M$ and $\delta(M) \subseteq M$ implies that $M/P$ is a non-trivial $\overline{\delta}$-ideal of $R/P$, and therefore $R/P$ is not $\overline{\delta}$-simple. Moreover, by \cite[Corollary 13.4]{E}, $\mbox{K.dim}(R/P) = \mbox{K.dim}(R) - \mbox{ht}(P)$ and, since $\mbox{ht}(P) = \mbox{K.dim}(R) - 1$, we obtain $\mbox{K.dim}(R/P) = 1$. Using Theorem \ref{kdimumdiamante}, we conclude that $S/SP \simeq R/P[\theta; \overline{\delta}]$ does not satisfy $(\diamond)$. Therefore $S$ does not satisfy $(\diamond)$.	
\end{proof}


Next, we present an example of a ring $R$ which is not $\delta$-primitive and such that $R[\theta; \delta]$ does not satisfy $(\diamond)$.

\begin{example}
Suppose that $K$ is an algebraically closed field and consider $R = K[x,y]$ with the $K$-derivation $\delta = x\partial_x + y\partial_y$. Since $K$ is an algebraically closed field, every maximal ideal of $R$ is of the form $M = R(x-\lambda) + R(y-\mu)$ for some $\lambda, \mu \in K$. If $\lambda = 0$, then $M$ contains the Darboux element $x$. In the case when $\lambda \neq 0$, we have $$\mu x - \lambda y = \mu x - \mu \lambda + \lambda \mu - \lambda y = \mu(x-\lambda) - \lambda(y-\mu) \in M$$
and $\mu x - \lambda y$ is a Darboux element. This shows that $R$ is not $\delta$-primitive. On the other hand, we have that $Rx + Ry \in \mbox{Max}(R) \cap \mathcal{F}_{\delta}$ and contains the irreducible Darboux element $x$. Moreover $\delta(R) \nsubseteq Rx$, since $y = \delta(y) \in \delta(R)$ and $y \not\in Rx$. By Proposition \ref{contexemprimitivodiamante}, we conclude that $R[\theta; \delta]$ does not satisfy $(\diamond)$.
\end{example}

The existence of Darboux elements plays, not only an importante role in the study of the property $(\diamond)$, but also in the study in $\delta$-$G$-rings as next propositions show.

\begin{prop}\label{dganel}
Let $R$ be a Noetherian UFD with derivation $\delta$. If there are infinitely many non-associated irreducible Darboux elements in $R$, then $R$ is not a $\delta$-$G$-ring.
\end{prop}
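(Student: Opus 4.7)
The plan is to exhibit explicitly an infinite family of non-zero $\delta$-prime ideals whose intersection is zero, which forces the defining intersection of a $\delta$-$G$-ring to be zero as well. Since $R$ is a UFD, in particular an integral domain, the zero ideal is prime and trivially a $\delta$-ideal, so $R$ is $\delta$-prime; hence the only way for $R$ to fail to be a $\delta$-$G$-ring is for the intersection $\bigcap\{P \triangleleft R \mid P \text{ is non-zero } \delta\text{-prime}\}$ to vanish.

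First I would observe that each irreducible Darboux element $p \in R$ yields a non-zero $\delta$-prime ideal $Rp$: since $p$ is irreducible in the UFD $R$, the principal ideal $Rp$ is prime; since $p$ is Darboux, $Rp$ is a $\delta$-ideal by the remark made in the Preliminaries, and a prime $\delta$-ideal in a Noetherian $K$-algebra is $\delta$-prime (by \cite[Corollary 1.4]{GW}). Let $\{p_i\}_{i \in \N}$ be a family of pairwise non-associated irreducible Darboux elements in $R$, which exists by hypothesis.

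The key step is then to show that $\bigcap_{i \in \N} Rp_i = 0$. If $0 \neq a \in R$, then by unique factorization $a$ has only finitely many non-associated irreducible factors. Hence there exists some index $j$ with $p_j \nmid a$, so $a \notin Rp_j$ and therefore $a \notin \bigcap_{i \in \N} Rp_i$. Consequently
\[
\bigcap\{P \triangleleft R \mid P \text{ is non-zero } \delta\text{-prime}\} \;\subseteq\; \bigcap_{i \in \N} Rp_i \;=\; 0,
\]
which means $R$ fails the $\delta$-$G$-ring condition, completing the argument.

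There is no real obstacle here: the proof is essentially a direct application of unique factorization combined with the observation that irreducible Darboux elements generate $\delta$-prime ideals. The only mild subtlety is remembering to invoke \cite[Corollary 1.4]{GW} to identify prime $\delta$-ideals with $\delta$-prime ideals in the Noetherian setting, and to note that $R$ being a domain already guarantees $\delta$-primeness so that the failure must occur in the intersection clause of the definition.
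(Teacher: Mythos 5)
Your proof is correct, but it takes a different route from the paper's. You argue directly from the definition of a $\delta$-$G$-ring: each irreducible Darboux element $p_i$ generates a non-zero prime $\delta$-ideal $Rp_i$ (hence a non-zero $\delta$-prime ideal), and since a non-zero element of a UFD has only finitely many non-associated irreducible factors, the infinitely many pairwise non-associated $Rp_i$ intersect in zero, which forces the defining intersection $\bigcap\{P \mid P \text{ non-zero } \delta\text{-prime}\}$ to vanish. The paper instead invokes the localization criterion \cite[Proposition 2.9]{GW} and shows that $R_c$ fails to be $\delta$-simple for every non-zero $c$, by producing an irreducible Darboux element $p \nmid c$ so that $R_c\pi(p)$ is a non-trivial $\delta$-ideal of $R_c$. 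Both arguments hinge on the same combinatorial fact about finite factorization in a UFD, but yours is self-contained and avoids the external characterization via localizations, while the paper's fits into the Goodearl--Warfield machinery it uses elsewhere. One minor point: you cite \cite[Corollary 1.4]{GW} to pass from ``prime $\delta$-ideal'' to ``$\delta$-prime,'' but that corollary requires $R$ to be a Noetherian $K$-algebra, which is not among the hypotheses here; fortunately the implication you need is the trivial direction, already recorded in the Preliminaries for arbitrary rings, so the citation is superfluous rather than a gap.
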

\begin{proof}
By \cite[Proposition 2.9]{GW}, it is enough to show that $R_c$ is not $\delta$-simple for all $c\in R \setminus \{0\}$. Given any non-zero element $c$ of $R$, then $c$ can be written as a finite product of irreducible elements. Since there are infinitely many irreducible Darboux elements, there must be an irreducible Darboux element $p$ that does not divide $c$. Let $\pi: R\rightarrow R_c$ be the canonical map $a\mapsto a1^{-1}$. Thus $\pi(p)$ is not invertible in the localization $R_c$, and so $R_c\pi(p)$ is a non-trivial $\delta$-ideal, that is,  $R_c$ is not $\delta$-simple, as desired.
\end{proof}

\begin{prop}\label{infdarbouxeqdganel}
Let $R$ be a UFD of Krull dimension $2$ which is an affine $K$-algebra and let $\delta$ be a non-zero $K$-derivation of $R$. Then there are only a finite number of non-associated irreducible Darboux elements in $R$ if and only if $R$ is a $\delta$-$G$-ring.
\end{prop}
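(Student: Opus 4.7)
The $(\Leftarrow)$ direction is exactly the contrapositive of Proposition \ref{dganel}, so my plan is to establish $(\Rightarrow)$.

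Suppose the non-associated irreducible Darboux elements of $R$ are $p_{1},\ldots,p_{n}$ (with $n=0$ allowed). Since $R$ is a domain, $0$ is a prime $\delta$-ideal and hence $R$ is $\delta$-prime; what remains to verify is that the intersection $\mathcal{I}$ of all non-zero prime $\delta$-ideals of $R$ is non-zero. The strategy is to enumerate these $\delta$-primes by height, using that $R$ is a UFD of Krull dimension $2$, so every non-zero prime of $R$ is either principal of height $1$ or maximal of height $2$.

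At height $1$, a prime $Rq$ (with $q$ irreducible) is a $\delta$-ideal iff $q\mid\delta(q)$, i.e.\ iff $q$ is Darboux. Consequently the height-$1$ $\delta$-primes of $R$ are exactly $Rp_{1},\ldots,Rp_{n}$; since the $p_{i}$ are pairwise coprime in the UFD $R$, their intersection equals the principal ideal $R(p_{1}\cdots p_{n})$ (interpreted as $R$ when $n=0$). At height $2$, I would invoke Lemma \ref{dideaismaximais}: because $R$ is affine and $\delta$ is a $K$-derivation, every maximal $\delta$-ideal $M$ satisfies $\delta(R)\subseteq M$. Hence the intersection of all maximal $\delta$-ideals of $R$ contains the ideal $\langle\delta(R)\rangle$, which is non-zero since $\delta\neq 0$.

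Putting the two pieces together, $\mathcal{I}$ contains the product of ideals $R(p_{1}\cdots p_{n})\cdot\langle\delta(R)\rangle$, each factor being non-zero (the edge cases $n=0$ or no maximal $\delta$-ideal only simplify the situation, as the corresponding factor is then all of $R$). Since $R$ is a domain, this product is non-zero, whence $\mathcal{I}\neq 0$ and $R$ is a $\delta$-$G$-ring. The only genuinely non-trivial ingredient is Lemma \ref{dideaismaximais} — the Nullstellensatz-plus-characteristic-zero observation that a maximal $\delta$-ideal of an affine $K$-algebra must already absorb all of $\delta(R)$ — and once that is in hand the rest is bookkeeping inside a two-dimensional UFD; I do not anticipate any further obstacle.
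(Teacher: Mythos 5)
Your proof is correct and follows essentially the same route as the paper's: both classify the non-zero $\delta$-prime ideals of the two-dimensional UFD as either $Rp$ for $p$ an irreducible Darboux element or a maximal $\delta$-ideal, handle the latter via Lemma \ref{dideaismaximais}, and then produce a non-zero element of the intersection (the paper exhibits $p_1\cdots p_s\delta(b)$, which is just a generator-level version of your product of ideals $R(p_1\cdots p_n)\cdot\langle\delta(R)\rangle$). The only cosmetic point is that you should note, as in the preliminaries, that for a Noetherian $K$-algebra the $\delta$-prime ideals appearing in the definition of $\delta$-$G$-ring coincide with the prime $\delta$-ideals you enumerate.
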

\begin{proof}
($\Rightarrow$) First note that, since $R$ has Krull dimension $2$, given any non-zero $\delta$-prime ideal of $R$, it is either an ideal generated by an irreducible Darboux element or a maximal ideal which is a $\delta$-ideal. Thus considering $${\mathcal A} = \{Rp \,|\, p \ \mbox{is irreducible Darboux}\} \ \ \mbox{and} \ \ {\mathcal B} = \mbox{Max}(R) \cap \mathcal{F}_{\delta},$$ we have $\bigcap\{P \triangleleft R \,|\, P \ \mbox{is non-zero $\delta$-prime}\} = \bigcap_{P \in {\mathcal A} \cup {\mathcal B}} P$. Moreover, $\delta(b) \neq 0$ for some $b \in R$, since $\delta \neq 0$. It follows from Lemma \ref{dideaismaximais} that $0 \neq \delta(b) \in M$ for all $M\in {\mathcal B}$, that is, $0 \neq \delta(b) \in \bigcap_{M \in {\mathcal B}} M$.
	
If $R$ has no irreducible Darboux elements, that is, ${\mathcal A} = \varnothing$, then $$0 \neq \delta(b) \in \bigcap_{M \in {\mathcal B}} M = \bigcap\{P \triangleleft R \,|\, P \ \mbox{is non-zero $\delta$-prime}\}.$$ 

In the case when there are only a finite number $s > 0$ of non-associated irreducible Darboux elements in $R$, namely $p_1, \ldots, p_s$, we have $$0 \neq p_1\cdots p_s\delta(b) \in \bigcap_{P \in {\mathcal A} \cup {\mathcal B}} P = \bigcap\{P \triangleleft R \,|\, P \ \mbox{is non-zero $\delta$-prime}\}.$$ 

Hence, in both cases, we must have $\bigcap\{P \triangleleft R \,|\, P \ \mbox{is non-zero $\delta$-prime}\} \neq 0$. Furthermore, since $R$ is a domain, we have that $R$ is $\delta$-prime. Therefore $R$ is a $\delta$-$G$-ring.
	
($\Leftarrow$) Follows from Proposition \ref{dganel}.
\end{proof}

Combining Proposition \ref{infdarbouxeqdganel} with Theorem \ref{primitivoeqdprimitivo}, we obtain easily the following.

\begin{prop}\label{finitodarboux2}
Let $R$ be a UFD of Krull dimension $2$ which is an affine $K$-algebra and let $\delta$ be a non-zero $K$-derivation of $R$. If there are only a finite number of non-associated irreducible Darboux elements in $R$, then $R$ is $\delta$-primitive.
\end{prop}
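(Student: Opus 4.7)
The plan is to observe that this is essentially an immediate composition of the two results established just before it, so the proof should be a short chain of implications with no new technical ingredient.

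First, I would invoke Proposition \ref{infdarbouxeqdganel}. The hypotheses of the present proposition match exactly the left-hand side of that equivalence: $R$ is a UFD of Krull dimension $2$ which is an affine $K$-algebra, $\delta$ is a non-zero $K$-derivation, and there are only finitely many non-associated irreducible Darboux elements in $R$. Therefore Proposition \ref{infdarbouxeqdganel} gives that $R$ is a $\delta$-$G$-ring.

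Next, I would feed this into Theorem \ref{primitivoeqdprimitivo}. Since $R$ is an affine $K$-algebra that is an integral domain (UFDs are domains) and $R$ is a $\delta$-$G$-ring, that theorem immediately yields that $R$ is $\delta$-primitive, which is the desired conclusion.

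There is no real obstacle here: the content of the proposition is a bookkeeping combination, and all the heavy lifting was already done in proving Proposition \ref{infdarbouxeqdganel} (which handles the translation between the Darboux finiteness condition and the intersection defining the $\delta$-$G$-ring property) and Theorem \ref{primitivoeqdprimitivo} (which uses the Nullstellensatz-type fact $\mathcal{J}(R) = 0$ for affine $K$-domains to pass from $\delta$-$G$-ring to $\delta$-primitive). The only small point worth mentioning in the write-up is why $\delta \neq 0$ is preserved in the chain, but this is given directly by hypothesis, so no argument is needed.
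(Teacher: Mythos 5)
Your proposal is correct and matches the paper exactly: the paper introduces this proposition with the phrase ``Combining Proposition \ref{infdarbouxeqdganel} with Theorem \ref{primitivoeqdprimitivo}, we obtain easily the following,'' which is precisely your chain (finitely many irreducible Darboux elements $\Rightarrow$ $\delta$-$G$-ring $\Rightarrow$ $\delta$-primitive). No gaps.
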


Let $R = K[x, y]$ with a $K$-derivation $\delta$. Using Lemma \ref{dideaismaximais}, we can conclude that $$\mbox{Max}(R) \cap \mathcal{F}_{\delta} = \varnothing \ \ \ \mbox{if and only if} \ \ \ R = \langle \delta(x), \delta(y) \rangle.$$
A trivial example of $K$-derivations $\delta$ of $R = K[x,y]$ satisfying  $R = \langle \delta(x), \delta(y) \rangle$ are those such that $\delta(x) \in K\setminus\{0\}$ or $\delta(y) \in K\setminus\{0\}$.

A $K$-derivation $\delta$ of $K[x, y]$ is said to be a {\it Shamsuddin derivation} if $\delta$ is of the form $\delta = \partial_{x} + (ay + b)\partial_{y}$, where $a, b \in K[x]$. For these derivations, $R[\theta; \delta]$ does not satisfy $(\diamond)$.

\begin{example}\label{exemploanelpolnaodiamante}
	Let $R = K[x, y]$ with the $K$-derivation $\delta = \partial_{x} + (ay + b)\partial_{y}$, where $a, b \in K[x]$ and $a \neq 0$. Then $R[\theta; \delta]$ does not satisfy $(\diamond)$. In fact, if $R$ is $\delta$-simple, the result follows from Corollary \ref{exemplosdsimplesdiamante}. In the case when $R$ is not $\delta$-simple, it follows from \cite[Theorem 4.1, (b)]{BLL} that there exists a unique polynomial $c \in K[x]$ such that $\delta(c) = ac + b$ and $P = R(y - c)$ is the unique non-zero prime $\delta$-ideal of $R$. This shows that $y - c$ is the unique irreducible Darboux element of $R$. By Proposition \ref{finitodarboux2}, we conclude that $R$ is $\delta$-primitive. Hence, Theorem \ref{primitivodiamante} shows that $S$ does not satisfy $(\diamond)$.	
\end{example}

\begin{example}
Suppose that $K$ is algebraically closed and consider $R = K[x, y]$ with the $K$-derivation $$\delta = \partial_x + x^m(y + \gamma)^n\partial_y,$$ where $m, n \in \N$ and $\gamma \in K$. Then $S = R[\theta;\delta]$ satisfies $(\diamond)$ if and only if $n \neq 1$. Indeed, if $n = 1$, the result follows from Example \ref{exemploanelpolnaodiamante}. Conversely, if $n = 0$, then $\delta = \partial_x + x^m\partial_y$ is a triangular derivation. Thus $\delta$ is locally nilpotent, and so $S$ satisfies $(\diamond)$. Assume $n > 1$, and consider the elements
 $$p = y + \gamma \qquad \mbox{and} \qquad q_{\omega} = (x^{m+1} + \omega)(y + \gamma)^{n-1} + \frac{(m+1)}{(n-1)}, \ \ \mbox{where} \ \omega \in K.$$ We have $$\delta(p) = x^m (y + \gamma)^{n-1}p\qquad  \mbox{and} \qquad \delta(q_{\omega}) = (n-1) x^m(y + \gamma)^{n-1}q_{\omega},$$ and so they are Darboux elements.
Now, given any maximal ideal $M$ of $R$. Since $K$ is algebraically closed, we have $M = \langle x-\lambda, y-\mu \rangle$ for some $\lambda, \mu \in K$. If $\mu = -\gamma$, then $M = \langle x-\lambda, y+\gamma \rangle$ contains $p = y + \gamma$. In the case when $\mu \neq -\gamma$, consider the Darboux element  $$q_{\omega} = \left(x^{m+1} - \lambda^{m+1} - \frac{(m+1)}{(n-1) (\mu + \gamma)^{n-1}}\right)(y + \gamma)^{n-1} + \frac{(m+1)}{(n-1)}$$ for $\omega = - \lambda^{m+1} - \frac{(m+1)}{(n-1) (\mu + \gamma)^{n-1}}$. We have
\begin{eqnarray*}
	q_{\omega} &=& (y + \gamma)^{n-1}(x^{m+1}-\lambda^{m+1}) - \frac{(m+1)}{(n-1)(\mu + \gamma)^{n-1}}\left((y + \gamma)^{n-1}-(\mu + \gamma)^{n-1}\right)\\
	&=& a(x - \lambda) + b(y - \mu) \in M,
\end{eqnarray*}
where $$a = (y + \gamma)^{n-1}\left(\sum_{i=0}^{m} \lambda^i x^{m-i}\right) \ \mbox{and} \ b = - \frac{(m+1)}{(n-1) (\mu + \gamma)^{n-1}}\left(\sum_{i=0}^{n-2} (\mu + \gamma)^i (y + \gamma)^{n-2-i}\right).$$
Thus any maximal ideal of $R$ contains a Darboux element, and consequently a non-zero $\delta$-ideal. Therefore $R$ is not $\delta$-primitive. Moreover, $R = \langle \delta(x), \delta(y) \rangle$ because $\delta(x) = 1 \in K\setminus\{0\}$. Hence, $\mbox{Max}(R) \cap \mathcal{F}_{\delta} = \varnothing$ and the result follows from Theorem \ref{primitivodiamante}.
\end{example}

From now on, we assume that $K = \C$, the field of complex numbers, and $R = \C[x, y]$ with a $\C$-derivation $\delta$. Consider $L = \C(x, y)$ the field of fractions of $R$. Then $\delta$ extends uniquely to $L$. Let $$L^{\delta} = \{p/q \in L \,|\, \delta(p/q) = 0 \}$$ be the subring of constants of $L$. The main purpose here, is to provide necessary and sufficient conditions for the differential operator rings $\C[x,y][\theta; \delta]$ to satisfy $(\diamond)$.

\begin{prop}\label{caractdprimitivo}
Let $R = \C[x, y]$ with non-zero $\C$-derivation $\delta$. The following statements are equivalent:
	\begin{enumerate}
		\item $R[\theta; \delta]$ is primitive;
		\item $R$ is a $\delta$-$G$-ring;
		\item There are only a finite number of non-associated irreducible Darboux elements in $R$;
		\item $R$ is $\delta$-primitive;
		\item There is a maximal ideal that does not contain irreducible Darboux elements;
		\item $L^{\delta} = \C$.
	\end{enumerate}
\end{prop}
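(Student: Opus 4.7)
My plan is to establish the six conditions as equivalent by combining the two already-proved equivalences $(1) \Leftrightarrow (4)$ (Theorem \ref{primitivoeqdprimitivo}, valid because $\delta \neq 0$) and $(2) \Leftrightarrow (3)$ (Proposition \ref{infdarbouxeqdganel}) with a cyclic chain $(3) \Rightarrow (4) \Rightarrow (5) \Rightarrow (6) \Rightarrow (3)$. The initial link $(3) \Rightarrow (4)$ is Proposition \ref{finitodarboux2}, so only three new implications remain.

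The implication $(4) \Rightarrow (5)$ is immediate: if a maximal ideal $M$ witnesses the $\delta$-primitivity of $R$, then $M$ contains no non-zero $\delta$-ideal, hence no principal $\delta$-ideal $Rp$ with $p$ an irreducible Darboux element, and so no such $p$ lies in $M$.

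For $(5) \Rightarrow (6)$ I would argue by contrapositive. Assuming $L^{\delta} \supsetneq \C$, pick any non-constant $h = f/g \in L^{\delta}$ with $f, g \in R$ coprime. The identity $\delta(f)g = f\delta(g)$ combined with coprimality forces $g \mid \delta(g)$ and $f \mid \delta(f)$, with a common cofactor $\lambda := \delta(g)/g = \delta(f)/f \in R$; consequently $\delta(f - cg) = \lambda(f - cg)$ for every $c \in \C$, so the entire pencil $\{f - cg : c \in \C\}$ consists of Darboux polynomials, all non-zero because $h \notin \C$. Now, given any maximal ideal $M = \langle x - a, y - b \rangle$ of $R = \C[x, y]$, either $g(a, b) = 0$, in which case $g \in M$, or $c_0 := f(a, b)/g(a, b) \in \C$ and $f - c_0 g \in M$. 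Since $R$ is a UFD and irreducible factors of Darboux elements are Darboux (as recalled in the preliminaries), in either case $M$ contains an irreducible Darboux element, contradicting $(5)$.

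The main obstacle is $(6) \Rightarrow (3)$, which is the classical Darboux--Jouanolou theorem: a polynomial derivation of $\C[x, y]$ whose only rational first integrals are constants admits only finitely many non-associated irreducible Darboux polynomials. The standard argument exploits that cofactors of Darboux polynomials all lie in the finite-dimensional space of polynomials of degree at most $\max(\deg \delta(x), \deg \delta(y)) - 1$, so that an abundance of Darboux polynomials forces a $\Z$-linear relation among their cofactors and hence yields a non-trivial element of $L^{\delta}$. A self-contained proof is subtle, so I would simply invoke Jouanolou's theorem at this step; it is the only non-trivial external input needed to close the cycle.
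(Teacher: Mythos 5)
Your proposal is correct and follows essentially the same route as the paper: the same reduction to the cycle $(3) \Rightarrow (4) \Rightarrow (5) \Rightarrow (6) \Rightarrow (3)$ via the earlier results, the same contrapositive pencil argument $\{\alpha p - \beta q\}$ for $(5) \Rightarrow (6)$, and the same appeal to the Darboux--Jouanolou theorem (cited in the paper via Singer) for $(6) \Rightarrow (3)$.
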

\begin{proof}
The equivalences (1) $\Leftrightarrow$ (4) and (2) $\Leftrightarrow$ (3) are a consequence of Theorem \ref{primitivoeqdprimitivo} and Proposition \ref{infdarbouxeqdganel}, respectively. The statement (3) $\Rightarrow$ (4) follows from Proposition \ref{finitodarboux2} and (4) $\Rightarrow$ (5) is obvious.
	
(5) $\Rightarrow$ (6) Suppose that $L^{\delta} \neq \C$. We will show that every maximal ideal of $R$ contains an irreducible Darboux element. Let $p, q \in R$ be such that $p/q \in L^{\delta} \setminus \C$. Without loss of generality, we can assume $\mbox{mdc}(p,q) = 1$. Since $\delta(p/q) = (\delta(p)q - p\delta(q))/q^2 = 0$, we have $\delta(p)q = p\delta(q)$, and so $p \mid \delta(p)q$. As $\mbox{mdc}(p, q) = 1$, we obtain $p \mid \delta(p)$. Thus there exists $c \in R$ such that $\delta(p) = cp$, and therefore $\delta(q) = cq$. In this case, the elements of the form $\alpha p - \beta q$ are Darboux elements for all $\alpha, \beta \in \C$, not both zero.

Let $M = \langle x - \lambda, y - \mu \rangle$, where $\lambda, \mu \in \C$, be any maximal ideal of $R$. Note that $$M = \{r(x, y) \in R \,|\, r(\lambda, \mu) = 0\}.$$
If $p(\lambda, \mu) = 0$, then $M$ contains the Darboux element $p$. In the case when $p(\lambda, \mu) \neq 0$, set $\alpha = q(\lambda, \mu)$ and $\beta =  p(\lambda, \mu) \neq 0$. Thus $r = \alpha p - \beta q \in R$ is a Darboux element such that $r \in M$.
	
The above argument shows that every maximal ideal $M$ contains a Darboux element. Since the irreducible factors of a Darboux element are also Darboux elements, $M$ must contains an irreducible Darboux element.
	
(6) $\Rightarrow$ (3) By \cite[Darboux's Theorem, p. 686]{M}, if there are infinity non-associated irreducible Darboux elements in $R$, then there exists $w \in L\setminus \C$ such that $\delta(w) = 0$, that is, $L^{\delta} \neq \C$.
\end{proof}

\begin{thm}\label{primitivodiamante2}
Let $R = \C[x,y]$ with non-zero $\C$-derivation $\delta$ and let $S = R[\theta; \delta]$. Then $S$ satisfies $(\diamond)$ if and only if $S$ is not primitive and for any $M \in \mbox{Max}(R) \cap \mathcal{F}_{\delta}$ and $p\in M$, where $p$ is an irreducible Darboux element, we have $\delta(R) \subseteq Rp$.
\end{thm}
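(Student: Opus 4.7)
The plan is to treat the two implications separately, leaning on the characterizations assembled earlier in the section.

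For $(\Rightarrow)$, assume $S$ satisfies $(\diamond)$. If $S$ were primitive, Corollary \ref{diamanteprimitivo} would force $\mbox{K.dim}(R)\leq 1$, contradicting $\mbox{K.dim}(\C[x,y])=2$; hence $S$ is not primitive. For the second condition, fix $M\in\mbox{Max}(R)\cap\mathcal{F}_{\delta}$ and an irreducible Darboux element $p\in M$. Since $R$ is a UFD of Krull dimension $2$, the ideal $Rp$ is a prime $\delta$-ideal of height $1=\mbox{K.dim}(R)-1$ contained in the maximal $\delta$-ideal $M$, so Proposition \ref{contexemprimitivodiamante} applies and yields $\delta(R)\subseteq Rp$.

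For $(\Leftarrow)$, assume $S$ is not primitive and that the compatibility condition holds. The strategy is to invoke Lemma \ref{quocientesprimitivoshl}: for every primitive ideal $\mathcal{P}$ of $S$, produce an ideal $\mathcal{Q}\subseteq\mathcal{P}$ generated by a normalizing sequence such that $S/\mathcal{Q}$ satisfies $(\diamond)$. Since $S$ is not primitive, $\mathcal{P}\neq 0$ and $P=\mathcal{P}\cap R$ is a nonzero prime $\delta$-ideal. I will exhibit an irreducible Darboux element $p\in P$ and set $\mathcal{Q}=Sp$; as $p$ is Darboux, it is normal in $S$, so $\mathcal{Q}$ has the required structure, and $Sp\subseteq SP\subseteq\mathcal{P}$. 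If $P$ is non-maximal, then $\mbox{ht}(P)=1$ and, since $R$ is a UFD, $P=Rp$ for a prime (hence irreducible) element $p$, automatically Darboux because $P$ is a $\delta$-ideal. If $P$ is maximal, then Theorem \ref{primitivoeqdprimitivo} combined with the non-primitivity of $S$ shows $R$ is not $\delta$-primitive, so Proposition \ref{caractdprimitivo}(5) supplies an irreducible Darboux $p\in P$.

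It remains to verify that $S/Sp$ satisfies $(\diamond)$, and this is the delicate step. Split into subcases: if $Rp$ is not contained in any element of $\mbox{Max}(R)\cap\mathcal{F}_{\delta}$, then Lemma \ref{krull} gives $\mbox{K.dim}_{S}(S/Sp)=1$, and since $S/Sp\simeq(R/Rp)[\theta;\bar{\delta}]$ is a Noetherian domain, \cite[Theorem 10]{Krause} delivers $(\diamond)$. Otherwise some $M\in\mbox{Max}(R)\cap\mathcal{F}_{\delta}$ contains $Rp$, and the standing hypothesis applied to this $M$ and $p$ forces $\delta(R)\subseteq Rp$, so the induced derivation on $R/Rp$ vanishes; consequently $S/Sp\simeq(R/Rp)[\theta]$ is a differential operator ring over a commutative affine $\C$-algebra for the (trivially locally nilpotent) zero derivation, and \cite[Proposition 2.1]{CHL} delivers $(\diamond)$. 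The principal obstacle is the case where $\mathcal{P}\cap R$ is maximal: Krause's Krull-dimension-one argument is unavailable there, so one must leverage the hypothesis to kill the induced derivation and fall back to the commutative/locally-nilpotent regime.
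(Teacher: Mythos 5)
Your proposal is correct and follows essentially the same route as the paper: the forward implication via Corollary \ref{diamanteprimitivo} (the paper uses its specialization, Corollary \ref{exemplosdiamanteprimitivo}) together with Proposition \ref{contexemprimitivodiamante} applied to $P=Rp$, and the converse via Lemma \ref{quocientesprimitivoshl} with $\mathcal{Q}=Sp$ for an irreducible Darboux element $p$, checking $(\diamond)$ for $S/Sp$ either through Lemma \ref{krull} and Krause's theorem or by killing the induced derivation. The only difference is cosmetic: you split the quotient analysis on whether $Rp$ lies in a maximal $\delta$-ideal, while the paper splits on whether $\delta(R)\subseteq P$, but the two case divisions lead to the same subarguments.
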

\begin{proof}
Suppose that $S$ satisfaz $(\diamond)$. It follows from Corollary \ref{exemplosdiamanteprimitivo} that $S$ is not primitive. Assume, in addition, that $\mbox{Max}(R) \cap \mathcal{F}_{\delta} \neq \varnothing$. By contradiction, suppose that there exist $M \in \mbox{Max}(R) \cap \mathcal{F}_{\delta}$ and $p\in M$, where $p$ is an irreducible Darboux element, such that $\delta(R) \nsubseteq Rp$. Applying Proposition \ref{contexemprimitivodiamante}, we would have that $S$ does not satisfy $(\diamond)$, a contradiction.
	
Conversely, if $S$ is not primitive and $\mbox{Max}(R) \cap \mathcal{F}_{\delta} = \varnothing$, it follows from Theorem \ref{caractdprimitivo} that $S$ satisfies $(\diamond)$. Suppose now that $S$ is not primitive and for any $M \in \mbox{Max}(R) \cap \mathcal{F}_{\delta}$ and every irreducible Darboux element $p$ contained in $M$, we have $\delta(R) \subseteq Rp$. In this case, we will show that every primitive ideal $\mathcal{P}$ of $S$ contains an ideal $\mathcal{Q}$ of $S$ such that $\mathcal{Q}$ is generated by normal elements and $S/\mathcal{Q}$ satisfies $(\diamond)$. In fact, let $\mathcal{P}$ be any primitive ideal of $S$. Since $S$ is not primitive, $\mathcal{P}$ is a non-zero prime ideal of $S$. Thus $P = \mathcal{P}\cap R$ is a non-zero prime $\delta$-ideal of $R$ and either $\mathcal{P}=SP$ or $\delta(R)\subseteq P$. We will study these two cases below.
		
\underline{Case $\delta(R)\subseteq P$:} If $P$ is not maximal, then $P = Rp$ for some irreducible Darboux element $p \in R$. In this case, set $\mathcal{Q} = SP = Sp \subseteq \mathcal{P}$ and note that $\mathcal{Q}$ is an ideal of $S$ generated by the normal element $p$.  Since $\delta(R)\subseteq P$, we obtain that $$S/\mathcal{Q} = S/SP \simeq R/P[\theta; \overline{\delta}] = R/P[\theta]$$ is a commutative Noetherian ring, and therefore satisfies $(\diamond)$. On the other hand, when $P$ is a maximal ideal, as $S$ is not primitive, it follows from Proposition \ref{caractdprimitivo} that there exists a irreducible Darboux element $p$ such that $p \in P$. Set $\mathcal{Q} = Sp \subseteq \mathcal{P}$ and note that again $\mathcal{Q}$ is an ideal of $S$ generated by the normal element $p$. Since $P \in \mbox{Max}(R) \cap \mathcal{F}_{\delta}$ and $p$ is an irreducible Darboux element contained in $P$, by hypothesis, we have $\delta(R) \subseteq Rp$. Thus $$S/\mathcal{Q} = S/Sp \simeq R/Rp[\theta; \overline{\delta}] = R/Rp[\theta]$$ is again a commutative Noetherian ring, and so it satisfies $(\diamond)$.
		
\underline{Case $\delta(R)\nsubseteq P$:} In this case, we have $\mathcal{P} = SP$. Moreover, $P$ is not maximal. Otherwise, $P$ would be a maximal ideal which is a $\delta$-ideal and, by Lemma \ref{dideaismaximais}, we would have $\delta(R)\subseteq P$, which does not occur. Hence, $P = Rp$ for some irreducible Darboux element $p \in R$, and so $\mathcal{P} = SP = Sp$ is generated by the normal element $p$. Furthermore, any maximal ideal $M$ of $R$ containing $P = Rp$ is not a $\delta$-ideal because, otherwise by hypothesis, we would have $\delta(R) \subseteq Rp = P$. It follows from Lemma \ref{krull} that $\mbox{K.dim} (S/\mathcal{P}) = \mbox{K.dim}(S/SP) = 1$, and therefore $S/\mathcal{P}$ satisfies $(\diamond)$. In this case, it is enough to set $\mathcal{Q} = \mathcal{P}$.

By Lemma \ref{quocientesprimitivoshl}, we conclude that $S$ satisfies $(\diamond)$.		 	
\end{proof}

\begin{cor}\label{primitivodiamante3}
Let $R = \C[x,y]$ with $\C$-derivation $\delta$ such that $\mbox{mdc}(\delta(x), \delta(y)) = 1$. Then $R[\theta; \delta]$ satisfies $(\diamond)$ if and only if $S$ is not primitive and $\mbox{Max}(R) \cap \mathcal{F}_{\delta} = \varnothing$.
\end{cor}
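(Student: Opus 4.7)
The plan is to derive this corollary directly from Theorem~\ref{primitivodiamante2}, with the hypothesis $\mbox{mdc}(\delta(x),\delta(y))=1$ serving to eliminate the second clause (the one concerning maximal $\delta$-ideals) in that theorem.

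For the ($\Leftarrow$) direction I would simply observe that if $S$ is not primitive and $\mbox{Max}(R)\cap \mathcal{F}_{\delta}=\varnothing$, then the universal statement ``for any $M\in \mbox{Max}(R)\cap \mathcal{F}_{\delta}$ and $p\in M$ irreducible Darboux, $\delta(R)\subseteq Rp$'' in Theorem~\ref{primitivodiamante2} quantifies over the empty set and is vacuously satisfied; the theorem then delivers $(\diamond)$ at once.

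For the ($\Rightarrow$) direction, assuming $S$ satisfies $(\diamond)$, Theorem~\ref{primitivodiamante2} already forces $S$ to be non-primitive, so it remains only to rule out $\mbox{Max}(R)\cap\mathcal{F}_{\delta}\neq\varnothing$. I would argue by contradiction: fix $M$ in this intersection and try to locate an irreducible Darboux element $p\in M$ dividing both $\delta(x)$ and $\delta(y)$. The key intermediate step is the existence of such a $p$. Since $\mbox{mdc}(\delta(x),\delta(y))=1$ prevents both $\delta(x)$ and $\delta(y)$ from vanishing, we have $\delta\neq 0$; then Theorem~\ref{primitivoeqdprimitivo} translates ``$S$ not primitive'' into ``$R$ not $\delta$-primitive'', and the equivalence $(4)\Leftrightarrow(5)$ in Proposition~\ref{caractdprimitivo} guarantees that every maximal ideal of $R$, and in particular $M$, contains an irreducible Darboux element $p$. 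Applying Theorem~\ref{primitivodiamante2} to the pair $(M,p)$ now forces $\delta(R)\subseteq Rp$, so $p\mid \delta(x)$ and $p\mid \delta(y)$, whence $p$ divides $\mbox{mdc}(\delta(x),\delta(y))=1$ and must be a unit, contradicting its irreducibility.

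I expect essentially no obstacle here: the whole argument is bookkeeping across Theorem~\ref{primitivodiamante2}, Theorem~\ref{primitivoeqdprimitivo}, and Proposition~\ref{caractdprimitivo}. The only conceptual point to identify is that the coprimality of $\delta(x)$ and $\delta(y)$ is precisely what collapses the condition ``$\delta(R)\subseteq Rp$'' (which would demand a common irreducible factor of $\delta(x)$ and $\delta(y)$) into outright non-existence of maximal $\delta$-ideals.
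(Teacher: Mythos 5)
Your proposal is correct and follows essentially the same route as the paper: the coprimality of $\delta(x)$ and $\delta(y)$ rules out $\delta(R)\subseteq Rp$ for every irreducible Darboux element $p$, and the corollary then falls out of Theorem~\ref{primitivodiamante2}. In fact your write-up is slightly more complete than the paper's, since you make explicit (via Proposition~\ref{caractdprimitivo}) why a maximal $\delta$-ideal of a non-primitive $S$ would necessarily contain an irreducible Darboux element, a step the paper leaves implicit.
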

\begin{proof}
Since $\mbox{mdc}(\delta(x), \delta(y)) = 1$ and $\delta(x), \delta(y) \in \delta(R)$, we have $\delta(R) \nsubseteq Rp$, for any irreducible Darboux element $p \in R$. By Theorem \ref{primitivodiamante2}, the result follows.
\end{proof}

Recall that a polynomial is called {\it homogeneous of degree} $n$ if all its monomials have the same degree $n$. A derivation $\delta$ of $\C[x, y]$ is said to be {\it homogeneous derivation of degree} $n$ if $\delta(x)$ and $\delta(y)$ are both homogeneous of the same degree $n$.

\begin{example}
	Let $R = \C[x, y]$ with $\C$-derivation $\delta$. Suppose that $\delta$ is homogeneous of degree $n$ and $\mbox{mdc}(\delta(x), \delta(y)) = 1$. Then $R[\theta; \delta]$ satisfies $(\diamond)$ \ if and only if \ $n = 0$.
In fact, assume that $n > 0$ and set $M = \langle x, y \rangle$. Since $\delta(x)$ and $\delta(y)$ are homogeneous polynomials of the same degree $n > 0$, we have that $\delta(x)$ and $\delta(y)$ does not have constant terms. This implies that $\delta(x), \delta(y) \in M$, and consequently $M$ is a maximal ideal of $R$ which is a $\delta$-ideal. Thus $\mbox{Max}(R) \cap \mathcal{F}_{\delta} \neq \varnothing$. By Corollary \ref{primitivodiamante3}, $S$ does not satisfy $(\diamond)$. Conversely, it is enough to observe that if $n = 0$, $\delta$ is locally nilpotent.
\end{example}

\begin{rem}
	Let $R = \C[x,y]$ with $\C$-derivation $\delta$ such that $\mbox{mdc}(\delta(x), \delta(y)) = c \in R \setminus \C$, and so $\delta = c (a\partial_{x} + b\partial_{y}) = c \delta'$, where $a, b \in R$, $\mbox{mdc}(a, b) = 1$ e $\delta' = a\partial_{x} + b\partial_{y}$. In this case, we have:
	
	\begin{enumerate}
		\item $\mbox{Max}(R) \cap \mathcal{F}_{\delta} \neq \varnothing$.
		
		In fact, if  $\mbox{Max}(R) \cap \mathcal{F}_{\delta} = \varnothing$, then $R = \langle \delta(x), \delta(y) \rangle$, and therefore $\mbox{mdc}(\delta(x), \delta(y)) = 1$, a contradiction.
		
		\item If $R[\theta; \delta]$ satisfies $(\diamond)$, then $c \in \bigcap \{M \in \mbox{Max}(R) \,|\, \delta(M) \subseteq M \}$.
		
		Indeed, suppose that there is a maximal ideal $M$ of $R$ which is a $\delta$-ideal and $c \not\in M$. Since $R[\theta; \delta]$ satisfies $(\diamond)$, $R$ is not $\delta$-primitive and, by Proposition \ref{caractdprimitivo}, there exists an irreducible Darboux element $p \in R$ such that $p \in M$. As $c \not\in M$, we have $p \nmid c$. Moreover, $\mbox{mdc}(a, b) = 1$ implies that either $p \nmid a$ or $p \nmid b$. Thus either $p \nmid ca = \delta(x)$ or $p \nmid cb = \delta(y)$. This shows that either $\delta(x) \not\in Rp$ or $\delta(y) \not\in Rp$, and therefore $\delta(R) \nsubseteq Rp$. Applying now Theorem \ref{primitivodiamante2}, we obtain that $R[\theta; \delta]$ does not satisfy $(\diamond)$, a contradiction.
		
		\item Using (2) above, we can construct examples of differential operator rings $R[\theta; \delta]$ which do not satisfy $(\diamond)$. For instance, choose the polynomials $a$ and $b$ belonging to a maximal ideal $M$ such that $c \not\in M$. For example, set $c = x$. We know that $c \not\in M = \langle x - 1, y \rangle$. Now, set $a = y \in M$ and $b = x -1 \in M$. Thus $M$ is a maximal ideal of $R$ which is a $\delta$-ideal because $\delta (x - 1) = \delta (x) = ca \in M$ and $\delta (y) = cb \in M$. As $c \not\in M$, the remark above shows that $\C[x, y][\theta; x(y\partial_{x} + (x-1)\partial_{y}]$ does not satisfy $(\diamond)$.
	\end{enumerate}
\end{rem}

In the next example, we also have that $\mbox{Max}(R) \cap \mathcal{F}_{\delta} \neq \varnothing$. In this case, we will show that $S$ is not primitive and for any $M \in \mbox{Max}(R) \cap \mathcal{F}_{\delta}$ and $p\in M$, where $p$ is an irreducible Darboux element, we have $\delta(R) \subseteq Rp$, and apply Theorem \ref{primitivodiamante2}.

\begin{example}
Let $R = \C[x,y]$ with the $\C$-derivation $\delta = [(x+1)y - 1]y(\partial_{x} - y^2\partial_{y})$.
	
We start by describing the irrebucible Darboux elements of $R$. Set $\delta' = \partial_{x} - y^2\partial_{y}$, and so $\delta = [(x+1)y - 1]y\delta'$. Note that if $p$ is an irreducible Darboux element of $\delta$, from $p \mid \delta(p) = [(x+1)y - 1]y\delta'(p)$, it follows that $p \mid (x+1)y - 1$, $p \mid y$ or $p \mid \delta'(p)$. In the latter case, $p$ is an irreducible Darboux element of $\delta'$. Let $\delta'_{\mid_{\C[x]}}=\partial_x$ be the ordinary derivation of polynomials in $\C[x]$ and use the classical notation for derivatives, that is, $\delta'(a)=a'$ for all $a\in \C[x]$.
	
Suppose that $p=\sum_{i=0}^n a_i y^i$ is a non-constant Darboux element of $\delta'$ with $a_i \in \C[x]$ and $a_n\neq 0$. Thus $n > 0$. Otherwise $p \in \C[x]$ would be a Darboux  elements and as the only Darboux elements of $\C[x]$ are the non-zero constants, $p$ would be constant. By computing $\delta'(p)$, we obtain
$$\delta'(p)= \sum_{i=0}^n a_i' y^i - \sum_{i=1}^n ia_i y^{i+1} = a_0' + a_1'y + \sum_{i=2}^n (a_i' - (i-1)a_{i-1})y^i - na_ny^{n+1}.$$
Since $p$ is a Darboux element of $\delta'$, there exists $c \in R$ such that $\delta'(p) = cp$. Moreover, as $p$ is a polynomial of degree $n$ in $y$ that divides $\delta'(p)$, we must have that $c$ is a polynomial of degree $1$ in $y$, and so $c =  b_0 + b_1y$ with $b_0, b_1\in \C[x]$ and $b_1 \neq 0$. Hence,
$$cp = \sum_{i=0}^n b_0a_i y^i + \sum_{i=0}^n b_1a_i y^{i+1} = b_0a_0 + (b_0a_1 + b_1a_0)y + \sum_{i=2}^n (b_0a_i + b_1a_{i-1})y^i + b_1a_ny^{n+1}.$$
Comparing the coefficients of the monomials in $y$ in both expressions, we have
	\begin{eqnarray}
		a_0' &=& b_0a_0 \\
		a_1' &=& b_0a_1+b_1a_0\\
		a_i' - (i-1)a_{i-1}&=& b_0a_i + b_1a_{i-1} \qquad \forall\, 2\leq i\leq n\\
		-na_n &=& b_1a_n.
	\end{eqnarray}
From the last equation it follows that $b_1=-n \in \C\setminus \{0\}$. We will show that $b_0$ is zero. Suppose $b_0\neq 0$. By (1), $a_0$ is a Darboux element in $\C[x]$, and so $a_0\in \C$. But then $0=\delta(a_0) = a_0'=b_0a_0$ and $a_0=0$. Let  $1\leq i\leq n$ be the least index with  $a_i\neq 0$. From (3), it follows that $a_i' = b_0a_i$, that is, $a_i$ is a Darboux element in $\C[x]$, and again $a_i\in \C$. Thus $0 = a_i' = b_0a_i$, and consequently $b_0 = 0$ or $a_i = 0$, a contradiction. Therefore $b_0=0$, $c=-ny$ and $(-ny)p = d(p)$. From equations (1) to (3), we have
$$a_0' = 0, \qquad a_1' = -n a_0, \qquad  a_i'  = (i-1-n)a_{i-1},  \qquad \forall \, 2\leq i\leq n.$$
Consider $a = a_n$. Then, for any $1\leq k\leq n$,
	 $$a_{n-k} = \frac{-1}{k} a_{n-(k-1)}'  = \frac{-1}{k} \frac{(-1)^{k-1}}{(k-1)!} a^{(k)} =  \frac{(-1)^k}{k!} a^{(k)}.$$
Hence, the Darboux elements of $\delta'$ have the form $p = \sum_{k=0}^n  \frac{(-1)^k}{k!} a^{(k)} y^{n-k}$ for some polynomial $a\in \C[x]$ . Since $a^{(n+1)} = (-1)^n n! a_0'=0$, we have $\mbox{deg}(a) \leq n$. On the other hand, it is easy to check that for any element $p$ of the above form, we have $\delta'(p) = (-ny)p$, and so also is a Darboux element of $\delta'$. Therefore, a non-constant element $p \in R$ is a Darboux element of $\delta'$ if and only if $$p = \sum_{k=0}^n  \frac{(-1)^k}{k!} a^{(k)} y^{n-k}$$ for some non-zero polynomial $a\in \C[x]$ with $\mbox{deg}(a)\leq n$. Factoring out the leading coefficient of $a$, one can assume that $a$ is monic.

Let $p= \sum_{k=0}^n  \frac{(-1)^k}{k!} a^{(k)} y^{n-k}$ be a non-constant Darboux element of $\delta'$ of degree $n$ in $y$. Suppose $a \in \C[x]$ is a monic polynomial such that $2 \leq \mbox{deg}(a) \leq n$. Since $\C$ is algebraically closed, $a = uv$ for some $u, v\in \C[x,y]$ such that $u$ is monic of degree 1. Now, taking into account that $u' = 1$, for any $k > 0$,
$$a^{(k)} = (uv)^{(k)} = \sum_{i=0}^k {k \choose i} u^{(i)} v^{(k-i)} = uv^{(k)} + k v^{(k-1)}.$$
Using the fact that $v$ is a polynomial of degree at most $n-1$, $v^{(n)}=0$ and
	\begin{eqnarray*}p &=& uvy^n + \sum_{k=1}^n  \frac{(-1)^k}{k!} \left(uv^{(k)} + k v^{(k-1)}\right) y^{n-k}\\
		&=&uy \left(\sum_{k=0}^{n-1}  \frac{(-1)^k}{k!}v^{(k)}y^{n-k-1} \right) +  \sum_{k=1}^n  \frac{(-1)^k}{(k-1)!} v^{(k-1)} y^{n-k}\\ &=& (uy-1)\left(\sum_{k=1}^n  \frac{(-1)^{k-1}}{(k-1)!} v^{(k-1)} y^{n-k}\right).
	\end{eqnarray*}
Hence, if $p$ is an irreducible Darboux element of $\delta'$, $\mbox{deg}(a) < 2$. If $a$ is constant, then $a=1$ and $p = y^n$. Being irreducible forces $p = y$. If $a$ has degree one, then $a'=1$ and $p =ay^n - y^{n-1}$. Since  $p$ is irreducible, $p = (x+\mu)y - 1$ for some $\mu \in \C$. This shows that every irreducible Darboux elements of $\delta'$ (and therefore of $\delta$) is a scalar multiple of one of the following: $$p = y \qquad \mbox{or} \qquad q_{\mu} = (x+\mu)y - 1 \qquad \mbox{for some }\mu \in \C.$$

Note that $q_{\mu_{1}} = (x + \mu_1)y -1$ and $q_{\mu_{2}} = (x + \mu_2)y -1$ are non-associated whenever $\mu_{1} \neq \mu_{2}$. Therefore there are infinitely many non-associated irreducible Darboux elements of $\delta$. By Proposition \ref{caractdprimitivo}, we have that $S$ is not primitive.

We show now that if there exists an irreducible Darboux element $p$ in $M \in \mbox{Max}(R) \cap \mathcal{F}_{\delta}$, then $\delta(R) \subseteq Rp$. Let $M = \langle x-\alpha , y-\beta \rangle$ be a maximal ideal which is a $\delta$-ideal for some $\alpha,\beta\in \C$. Thus $\delta(x-\alpha) = [(x+1)y - 1]y \in M$, and so $(x+1)y - 1 \in M$ or $y \in M$. If $(x+1)y - 1 \in M$, $(\alpha+1)\beta - 1 = 0$ and $M = \left\langle x-\alpha, y - \frac{1}{\alpha + 1}\right\rangle$. If $y \in M$, $\beta = 0$ and $M = \langle x-\alpha, y\rangle$. Therefore, the maximal ideals which are $\delta$-ideals are of the form
$$M = \langle x-\alpha, y \rangle \qquad \mbox{or} \qquad M = \left\langle x-\alpha, y - \frac{1}{\alpha + 1}\right\rangle.$$ If $M = \langle x-\alpha, y \rangle$, as $q_{\mu}(x, y) = (x + \mu)y - 1 \not\in M$, the only irreducible Darboux elements contained in $M$ are the scalar multiples of $p(x, y) = y$. Note that $\delta(R) \subseteq Ry$, since $\delta(x), \delta(y) \in Ry$. If $M = \left\langle x-\alpha, y - \frac{1}{\alpha + 1}\right\rangle$, obviously $p(x, y) = y \not\in M$. Moreover, $$q_{\mu}(x, y) = (x + \mu)y - 1 \in M \ \  \Leftrightarrow \ \  q_{\mu}\left(\alpha, \frac{1}{\alpha + 1}\right) = (\alpha + \mu)\frac{1}{\alpha + 1} - 1 = 0 \ \ \Leftrightarrow \ \ \mu = 1.$$
Hence, the only irreducible Darboux elements contained in $M$ are the scalar multiples of $q_{1}(x, y) = (x + 1)y - 1$. Note that $\delta(R) \subseteq R[(x + 1)y - 1]$ because $\delta(x), \delta(y) \in R[(x + 1)y - 1]$.

Therefore, it follows from Theorem \ref{primitivodiamante2} that $R[\theta; \delta]$ satisfies $(\diamond)$.
\end{example}

\section*{Acknowledgments}

This paper is part of the doctoral thesis of the second author at the  Federal University of Rio Grande do Sul, Brazil. The second author wishes to thank his co-supervisor Paula A. A. B. Carvalho for her guidance, assistance and understanding. Both authors would also like to thank Christian Lomp for all of his comments and suggestions. Finally, they would like to thank Rene Baltazar for fruitful discussions and Marcelo Escudeiro Hernandes for suggesting several references needed for this work.

\end{document}